\pgfplotsset{compat=newest, compat/show suggested version=false}
\newtheorem{thm}{Theorem}[section]
\newtheorem{coro}[thm]{Corollary}
\newtheorem{lem}[thm]{Lemma}
\newtheorem{remark}[thm]{\textit{Remark}}
\newtheorem{defn}[thm]{Definition}
\newcommand{\beq}{\begin{equation}}
\newcommand{\eeq}{\end{equation}}
\newcommand{\dist}{\text{dist}}
\newcommand{\argmin}{\text{argmin}}
\def\cC{\mathcal{C}}
\def\cE{\mathcal{E}}
\def\cH{\mathcal{H}}
\def\cI{\mathcal{I}}
\def\cL{\mathcal{L}}
\def\cO{\mathcal{O}}
\def\cP{\mathcal{P}}
\def\N{\mathbb{N}}
\def\R{\mathbb{R}}
\def\Z{\mathbb{Z}}
\def\eb{\bm e}
\def\vb{\bm v}
\def\xb{\bm x}
\def\yb{\bm y}
\def\zb{\bm z}
\def\xib{\bm\xi}
\newcommand{\al}{\alpha}
\newcommand{\ga}{\gamma}
\newcommand{\del}{\delta}
\newcommand{\la}{\lambda}
\newcommand{\sig}{\sigma}
\newcommand{\om}{\omega}
\newcommand{\La}{\Lambda}
\newcommand{\Om}{\Omega}
\title[Monotone meshfree methods for linear elliptic equations]{Monotone meshfree methods for linear elliptic equations in non-divergence form via nonlocal relaxation}
\thanks{}
\author{Qihao Ye}
\address{Department of Mathematics, University of California, San Diego, CA 92093, United States} 
\email{q8ye@ucsd.edu}
\author{Xiaochuan Tian}
\address{Department of Mathematics, University of California, San Diego, CA 92093, United States} 
\email{xctian@ucsd.edu}
\date{}
\renewcommand \theequation {\arabic{section}.\arabic{equation}}
\begin{document}

\begin{abstract}
We design a monotone meshfree finite difference method for linear elliptic equations in the non-divergence form on point clouds via a nonlocal relaxation method.
The key idea is a novel combination of a nonlocal integral relaxation of the PDE problem with a robust meshfree discretization on point clouds. 
Minimal positive stencils are obtained through a local $l_1$-type optimization procedure that automatically guarantees the stability and, therefore, the convergence of the meshfree discretization for linear elliptic equations.
A major theoretical contribution is the existence of consistent and positive stencils for a given point cloud geometry. We provide sufficient conditions for the existence of positive stencils by finding neighbors within an ellipse (2d) or ellipsoid (3d) surrounding each interior point, generalizing the study for Poisson's equation by Seibold (Comput Methods Appl Mech Eng 198(3-4):592-601, 2008). It is well-known that wide stencils are in general needed for constructing consistent and monotone finite difference schemes for linear elliptic equations. 
Our result represents a significant improvement in the stencil width estimate for positive-type finite difference methods for linear elliptic equations in the near-degenerate regime (when the ellipticity constant becomes small), compared to previously known works in this area.
Numerical algorithms and practical guidance are provided with an eye on the case of small ellipticity constant. 
At the end, we present numerical results for the performance of our method in both 2d and 3d, examining a range of ellipticity constants including the near-degenerate regime.
\end{abstract}

\subjclass[2020]{Primary 65N06, 65N12,65N35, 35B50, 35J15; Secondary 65R20, 35J70, 45A05}

\keywords{Meshfree method, Minimal positive stencil, Monotone scheme, Discrete maximum principle, Linear elliptic PDEs, Near-degenerate, Non-divergence form, Nonlocal relaxation, Linear minimization problem, Point cloud, Super-convergence}

\maketitle
\begin{center}
{\small \color{purple} This work has been published in \emph{Journal of Scientific Computing}. Please refer to \href{https://doi.org/10.1007/s10915-023-02294-3}{the official publication} for citation.}
\end{center}
\onehalfspacing



\section{Introduction}

In this work, we consider numerical approximations to the second-order elliptic equations in non-divergence form
\beq
\label{eq:elliptic}
\left\{
\begin{aligned}
-L u(\xb) := - \sum_{i,j=1}^d a^{ij}(\xb) \partial_{ij} u (\xb) &= f(\xb) \qquad    \xb \in \Om \\
u(\xb)& = g(\xb) \qquad    \xb \in \partial\Om,  
\end{aligned}
\right. 
\eeq
for a bounded domain $\Om \subset \R^d$. $A(\xb) = ( a^{ij}(\xb) )_{i,j=1}^d$ is a bounded and measurable matrix-valued function and is assumed to be symmetric and positive definite satisfying the uniform
ellipticity condition
\beq
\la |\xib|^2 \leq \xib^{T} A(\xb)\xib \leq \La |\xib|^2 \qquad \forall \xib\in \R^d, \forall\xb\in\Om
\eeq
for positive constants $\la$ and $\La$. Notice that by dividing both sides of the first equation in \eqref{eq:elliptic} by $\La$, we can assume without loss of generality that
$\La =1$ and the ratio $\varrho: =\la/\La =\lambda \leq 1$. 
Notice that if the coefficient matrix $A$ is differentiable, then the non-divergence form equation \eqref{eq:elliptic} can be recast into a divergence form convection-diffusion equation. 
However, when $A$ is not differentiable, such reformulation no longer exists.
In this work, we only assume $A$ is bounded and measurable so that a variational formulation is not available.

Linear elliptic equations in non-divergence form arise in various applied domains 
including probability and stochastic processes \cite{fleming2006controlled}. 
They are also recognized as the linearization of fully nonlinear PDEs such as the Monge-Amp\`ere equation, which arises in applications to the optimal transportation problem and geometry  \cite{caffarelli1997properties,le2017dynamical,trudinger2008monge},   
and Hamilton-Jacobi-Bellman (HJB) equations with applications to stochastic optimal control and finance \cite{fleming2006controlled}. 
In many practical scenarios, the coefficient matrix $A(\boldsymbol{x})$ lacks smoothness or continuity, and therefore cannot be recast into a divergence form \cite{cabre2008elliptic,caffarelli1997properties,feng2021narrow,fleming2006controlled}.  
PDE theories for non-divergence form elliptic equations are well-established in the literature.
Existence, uniqueness and regularity theories are established for different notions of solutions including
classical solutions, strong solutions, and viscosity solutions \cite{crandall1992user,gilbarg1977elliptic,maugeri2000elliptic,safonov1999nonuniqueness}.

In terms of numerical methods, non-divergence form elliptic PDEs are much less discussed than the divergence form PDEs because of a lack of variational formulation aforementioned. 
Discussions on finite element methods can be found in \cite{feng2017finite,feng2018interior,lakkis2011finite,neilan2017numerical,nochetto2018discrete,smears2013discontinuous} and the references therein.
We pursue the direction of the positive-type finite difference method for elliptic PDEs which has guaranteed convergence as a result of consistency and monotonicity/stability.     
Such pursuit dates back to Motzkin and Wasow \cite{motzkin1952approximation}, Kuo and Trudinger \cite{kuo1990linear}, and Kocan \cite{kocan1995approximation}.   
What was found in these works was that consistent and positive-type finite difference schemes exist for a given elliptic operator, but the 
stencil size grows with $\varrho \to 0$, and therefore the so-called wide stencil is a necessary feature of monotone finite difference methods
even in the case of linear elliptic equations. 
Wide-stencil methods are later developed also for fully nonlinear elliptic PDEs \cite{froese2011convergent,oberman2008wide,nochetto2019pointwise,nochetto2019two}, and a few recent developments on monotone finite difference methods for nonlinear elliptic PDEs are found in \cite{finlay2019improved,froese2018meshfree,hamfeldt2022convergent}.
For the linear elliptic PDEs, Kocan \cite{kocan1995approximation} gives an estimate of the stencil width for the existence of positive-type finite difference method, and it grows linearly with $\varrho^{-1}$ in 2d and superlinearly with $\varrho^{-1}$ in 3d, which severely impacts the practical use of the scheme for small ratio $\varrho$. In a more recent work \cite{mirebeau2016minimal}, a positive-type finite difference scheme with anisotropic stencils is discussed but their stencil width estimate in 2d is also at the scale of $\varrho^{-1}$.

Our work is inspired by the recent development of meshfree methods for nonlocal models \cite{DGLZ12,fan2022meshfree,LTTF21,trask2019asymptotically}.
The fundamental new idea of numerical approximation to \eqref{eq:elliptic} is a continuous nonlocal relaxation of the PDE problem followed by asymptotically compatible discretizations. More specifically, we first define a family of nonlocal operators $\{ \cL_\del \}_{\del>0}$ as continuous approximations to the elliptic operator $L$, and then seek for robust discretizations $ \{ \cL_\del^h \}_{\del>0, h>0}$ by which the discretization parameter $h$ is allowed to be proportional to the nonlocal length parameter $\del$, i.e., $h = \Theta(\delta)$, for the convergence to the elliptic problem \eqref{eq:elliptic}. 
In our work, $\cL_\del u(\xb)$ is defined by an integral over an elliptical region depending on $A(\xb)$ where $\del>0$ denotes the bound of the semi-major axis of it. 
We note that a similar integral approximation to the linear elliptic operator is used in \cite{nochetto2018discrete}, while a non-robust discretization is employed so that a stronger relation, $h=o(\delta)$, is needed for convergence. In contrast, the idea of asymptotically compatible schemes \cite{TiDu14,trask2019asymptotically} could significantly improve the efficiency of numerical methods via nonlocal relaxation. 
We note that nonlocal integral relaxation to PDEs is a natural idea linked with meshfree discretization and has been utilized in numerical schemes such as the smoothed particle hydrodynamics (SPH) \cite{du2015integral,gingold1977smoothed,liu2010smoothed},  vortex methods \cite{beale1982vortex,cottet2000vortex} and other particle methods \cite{belytschko2000unified,craig2016blob,tornberg2003regularization}.
The basic idea of the meshfree discretization is an $l_1$-type local optimization method for obtaining minimal positive stencils on point clouds. 
The major theoretical contribution (cf.~\Cref{thm:nbh_criterion}) is an estimate of the elliptical searching region, parametrized by $\del>0$, for guaranteeing the existence of consistent and positive stencils. 
There are two main takeaways from \Cref{thm:nbh_criterion}. First, for a fixed ratio $\varrho>0$, $\delta$ can be chosen as a constant multiple of $h$, leading to discretizations of asymptotically compatible type. Second, for small $\varrho>0$, our theorem guarantees the existence of positive-type finite difference method within a stencil width of $\cO(\varrho^{-1/2})$, which is a substantial improvement of the known theoretical results in  \cite{kocan1995approximation,mirebeau2016minimal}. 
In terms of numerical algorithms, the size of the searching region determines the computational complexity of the local minimizations problems,
and the near-optimal elliptic searching area estimate allows a practical assembly process in both 2d and 3d.
In addition, the $l_1$-type minimization leads to minimal stencils with at most six points in 2d and ten points in 3d, thus the resulting linear system is sparse and can be solved efficiently by iterative methods.
One notable feature of our approach is that it is the first numerical method, as far as we are aware, that has been successfully implemented in both 2d and 3d for the solution of linear elliptic equations with a wide range of $\varrho\in (0,1]$, including the near-degenerate regime when $\varrho\ll 1$. 
We emphasize that our approach takes an innovative path that could have a potential impact on how meshfree and particle methods can be designed to enhance their mathematical properties as well as practical performances.
Indeed, many traditional meshfree methods and data analysis algorithms unanimously suffer the restrictive condition $h=o(\delta)$ on the two length-scales $h$ and $\delta$, see e.g., \cite{beale1982vortex,craig2016blob,garcia2020error,li2016convergent}.  Our work exemplifies the possible approach to improving the performance of meshfree and particle methods via nonlocal relaxation and robust discretizations. Extension of the approach to other interesting problems will be considered in the future.

The rest of the paper is organized as follows. We first define a family of $\del>0$ parameterized nonlocal integral approximations to \cref{eq:elliptic} in \Cref{sec:nonlocalrelaxation}. 
The consistency errors of the continuum nonlocal model to the elliptic equation with respect to $\delta$ are discussed.  
In \Cref{sec:meshfree}, we present the meshfree discretization based on the nonlocal regularized problem. An $l_1$-type local optimization
method is proposed in search of positive stencils in an elliptical neighborhood surrounding each node. 
A major theoretical result concerning the neighborhood criteria for guaranteeing the existence of consistent and positive stencils is presented in \Cref{thm:nbh_criterion}.
Convergence of numerical solutions in terms of the discretization parameter $h$ is then followed by monotonicity and the relation of $\delta$ and $h$ in \Cref{thm:nbh_criterion}. 
In \Cref{sec:algorithm}, we discuss point cloud generation and management, the assembling process, and
provide complexity analysis and practical guidance for the implementation of the numerical method. 
Since the feasibility condition in \Cref{sec:algorithm} contains an implicit constant $c=c(d)>0$, we estimate the constant numerically in this section and provide the searching neighborhood estimate that can be used in practice for $d=2$ and $d=3$.     
Finally, we present the 2d and 3d numerical results in \Cref{sec:num}, and make the conclusion and further discussions in \Cref{sec:conclusion}.


\section{Nonlocal relaxation to elliptic equations}
\label{sec:nonlocalrelaxation}
\renewcommand\tablename{Table}
\begin{center}
    \begin{longtable}{c | p{9.8cm}}
    \caption{Table for major notations.}\label{table:major_notations}\\
    \hline
    Symbol & Definition\\
    \hline
    $L$ & second-order elliptic operator\\
    $\Omega$ & a bounded domain\\
    $A(\boldsymbol{x})$ & coefficient matrix in $L$ such that $\lambda |\boldsymbol{\xi}|^{2} \leq \boldsymbol{\xi}^{T} A(\boldsymbol{x}) \boldsymbol{\xi} \leq \Lambda |\boldsymbol{\xi}|^{2}$\\
    $a^{i j}(\boldsymbol{x})$ & component of coefficient matrix $A(\boldsymbol{x})$\\
    $M(\boldsymbol{x})$ & $(A(\boldsymbol{x}))^{1 / 2}$\\
    $\varrho$ & ellipticity ratio $\lambda / \Lambda$\\
    $\delta$ & nonlocal regularization parameter\\
    $h$ & fill distance (discretization parameter)\\
    $\gamma$ & nonnegative kernel with $\displaystyle \int_{B_{1}(\boldsymbol{0})} |\boldsymbol{y}|^{2} \gamma(|\boldsymbol{y}|) \, d \boldsymbol{y} = 2 d$\\
    $\mathcal{E}_{\delta}^{\boldsymbol{x}}(\boldsymbol{z})$ & $\displaystyle \{ \boldsymbol{y} \in \mathbb{R}^{d} : M(\boldsymbol{x})^{- 1} (\boldsymbol{y} - \boldsymbol{z}) \in B_{\delta}(\boldsymbol{0}) \}$ (ellipse centered at $\boldsymbol{z}$)\\
    $\mathcal{C}_{\delta}^{\boldsymbol{v}}(\boldsymbol{x}_{i})$ & $\displaystyle \left \{ \boldsymbol{x} \in B_{\delta}(\boldsymbol{x}_{i}) : \boldsymbol{x}^{T} \boldsymbol{v} \geq \frac{1}{\sqrt{1 + \sigma_{d}}} |\boldsymbol{x}|^{2} \right \}$ (cone in $B_{\delta}(\boldsymbol{x}_{i})$)\\
    $\rho_{\delta}(\boldsymbol{x}, \boldsymbol{y})$ & $\frac{1}{\delta^{d + 2}} \gamma \left ( \frac{|M(\boldsymbol{x})^{- 1} \boldsymbol{y}|}{\delta} \right ) \det(M(\boldsymbol{x}))^{- 1}$\\
    $\widetilde{\mathcal{L}}_{\delta}$ & $\displaystyle \widetilde{\mathcal{L}}_{\delta} u(\boldsymbol{x}) = \int_{B_{\delta}(\boldsymbol{0})} \frac{1}{\delta^{d + 2}} \gamma \left ( \frac{|\boldsymbol{y}|}{\delta} \right ) (u(\boldsymbol{x} + \boldsymbol{y}) - u(\boldsymbol{x})) \, d \boldsymbol{y}$ \newline (nonlocal Laplace operator)\\
    $\widetilde{\mathcal{L}}_{\delta}^{h}$ & nonlocal approximate Laplace operator\\
    $\mathcal{L}_{\delta}$ & $\displaystyle \mathcal{L}_{\delta} u(\boldsymbol{x}) = \int_{\mathcal{E}_{\delta}^{\boldsymbol{x}}(\boldsymbol{0})} \rho_{\delta}(\boldsymbol{x}, \boldsymbol{y}) (u(\boldsymbol{x} + \boldsymbol{y}) - u(\boldsymbol{x})) \, d \boldsymbol{y}$ \newline (nonlocal elliptic operator)\\
    $\mathcal{L}_{\delta}^{h}$ & nonlocal approximate elliptic operator\\
    $\mathcal{L}_{\delta, \Omega}^{h}$ & nonlocal approximate elliptic operator in extended space\\
    $S_{\delta, h, p}$ & $\displaystyle \Big \{ \{ \omega_{j, i} \} : \omega_{j, i} \geq 0 \text{ and } \mathcal{L}_{\delta}^{h} u(\boldsymbol{x}_{i}) = \mathcal{L}_{\delta} u(\boldsymbol{x}_{i}) \forall u \in \mathcal{P}_{p}(\mathbb{R}^{d}) \Big \}$ \newline (feasible set)\\
    $\overline{S}_{\delta, h, p}$ & $\displaystyle \Big \{ \{ \omega_{j, i} \} : \omega_{j, i} \geq 0 \text{ and } \mathcal{L}_{\delta, \Omega}^{h} u(\boldsymbol{x}_{i}) = \mathcal{L}_{\delta} u(\boldsymbol{x}_{i}) \forall u \in \mathcal{P}_{p}(\mathbb{R}^{d}) \Big \}$\\[6pt]
    \hline
    \end{longtable}
\end{center}

In this section, we discuss the nonlocal integral approximation to \cref{eq:elliptic} on which our numerical methods are based. Major notations used in this paper are listed in \Cref{table:major_notations}.
\subsection{Nonlocal elliptic operators in non-divergence form}
Nonlocal models have gained much interest in recent years \cite{andreu2010nonlocal,bucur2016nonlocal,Du2019}.
In \cite{DDGG20,DGLZ12}, a nonlocal Laplace operator with a parameter dependence on $\delta >0$ is used as an approximation to the classical Laplace operator $\Delta$ (when $A(\xb) = I$). The nonlocal Laplace operator is given by 
\[
\widetilde{\cL}_\del u(\xb) = \int_{B_\del(\bm 0)} \frac{1}{\del^{d+2}}\gamma\left(\frac{|\yb|}{\del}\right) (u(\xb+\yb) - u(\xb)) d\yb
\]
where $\gamma$ is a nonnegative kernel with
\beq
\label{eq:kernel_secondmoment}
\int_{B_1(\bm 0)} |\yb|^2 \ga(|\yb|) d\yb = 2d. 
\eeq
It can be shown that as $\del\to0$, $\widetilde{\cL}_\del u (\xb) \to \Delta u(\xb)$ for a sufficiently smooth function $u$. 
Here we consider a more general nonlocal elliptic operator that approximates the classical elliptic operator $L$ in \cref{eq:elliptic} in the $\del\to0$ limit. 
Following \cite{nochetto2018discrete}, we define the nonlocal elliptic operator parameterized by $\del$ as
\beq
\label{NonlocalEllipticOp}
\cL_\del u(\xb) = \int_{\cE_\del^{\xb}(\bm{0})} \frac{1}{\del^{d+2}}\gamma\left(\frac{|M(\xb)^{-1}\yb|}{\del}\right) \det(M(\xb))^{-1} (u(\xb+\yb) - u(\xb)) d\yb 
\eeq 
where $M(\xb) := (A(\xb))^{1/2}$ is a positive definite matrix and $\cE_\del^{\xb}(\zb)$ denotes an ellipse with definition
\beq
\cE_\del^{\xb}(\zb) := \{ \yb \in \R^d: M(\xb)^{-1} (\yb-\zb) \in B_\del(\bm 0)\}. 
\eeq
\begin{figure}[htp]
    \centering
    \begin{tikzpicture}[scale=1]
    \coordinate (c1) at (-3, 0);
    \coordinate (c2) at (3, 0);

    \draw (c1) circle (1);
    \fill (c1) circle (1pt);
    \node at ($(c1) + (0.4, 0.45)$) {$\scriptstyle B_{\delta}(\boldsymbol{0})$};
    \node at ($(c1) - (0.15, 0.15)$) {$\scriptstyle \boldsymbol{0}$};

    \draw[rotate around={-60:(c2)}] (c2) ellipse (0.8018 and 1.5);
    \fill (c2) circle (1pt);
    \node at ($(c2) + (-0.5, -0.45)$) {$\scriptstyle \mathcal{E}_{\delta}^{\boldsymbol{x}}(\boldsymbol{z})$};
    \node at ($(c2) - (0.2, 0)$) {$\scriptstyle \boldsymbol{z}$};
    \draw[dashed] (c2) -- ($(c2) + (1.5*0.866, 1.5*0.5)$) node [below=2pt, right] {$\scriptstyle a\leq \del$}; 
    \draw[dashed] (c2) -- ($(c2) + (0.8018*0.5, -0.8018*0.866)$) node [below=3pt, right=-1pt] {$\scriptstyle  b\geq \del \sqrt{\varrho}$} ; 

    \draw[->, line width=0.3mm] ($(c1) + (1.5, 0)$) -- ($(c2) + (-1.5, 0)$);

    \end{tikzpicture}
    \caption{An illustration of $\mathcal{E}_{\delta}^{\boldsymbol{x}}(\boldsymbol{z})$ in 2d.}
    \label{fig:ellipse_definition}
\end{figure}
Notice that by our assumption on $A(\xb)$, $\cE_\del^{\xb}(\zb)$ is an elliptical region centered at $\zb$ with
semi-axes being $\{\del \sqrt{\lambda_i(\xb)} \}_{i=1}^d$ where $\lambda_i(\xb)$ denotes the $i$-th smallest eigenvalue of $A(\xb)$.
By our assumption, $\varrho\leq \lambda_1(\xb) \leq \cdots \leq\lambda_d(\xb)\leq 1$.
\Cref{fig:ellipse_definition} shows a 2d example where the  semi-major axis of the ellipse is $\del \lambda_2^{1/2}(\xb)\leq \del$ and the semi-minor axis is $\del\lambda_1^{1/2}(\xb)\geq \del\sqrt{\varrho}$. For convenience, we define
\beq
\rho_\del(\xb,\yb) = \frac{1}{\del^{d+2}}\gamma\left(\frac{|M(\xb)^{-1}\yb|}{\del}\right) \det(M(\xb))^{-1} 
\eeq 
and simply write $\cL_\del u(\xb) =\int_{\cE_\del^{\xb}(\bm{0})} \rho_\del(\xb,\yb)  (u(\xb+\yb) - u(\xb)) d\yb  $. 

In the next, we show the consistency between $\cL_\del$ and $L$ on sufficiently smooth functions. 
The first result asserts that $\cL_\del u$ agrees with $L u$ for all polynomials up to the third order,
and the second result gives pointwise truncation error for sufficiently smooth functions. 
Similar calculations can be found in \cite{nochetto2018discrete}, here we present them for completeness and for 
the convenience of analyzing our numerical method in \Cref{sec:meshfree}. 

\begin{lem}
\label{lem:polynomialconsistency}
Let $\cP_p(\R^d)$ denote the space of all polynomials up to order $p$ and  $\bm\al= (\al_1, \al_2,\cdots, \al_d) \in (\Z^+\cup\{0\})^d$ with $|\bm\al | =\sum_{i}\al_i$. Then
for any $\xb\in\R^d$,
\beq
\int_{\cE_\del^{\xb}(\bm{0})} \rho_\del(\xb,\yb) \yb^{\bm\al}  d\yb = \bm 0 
\eeq
if $|\bm\al | $ is an odd number
and 
\beq
\label{eq:NonlocalLocaloperator}
\cL_\del u(\xb)  = L u(\xb) \quad \forall u\in \cP_3(\R^d). 
\eeq
\end{lem}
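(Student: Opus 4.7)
The plan is to reduce everything to an isotropic integral over the ball $B_\del(\bm 0)$ by means of the linear change of variables $\yb = M(\xb)\zb$. Since $M(\xb)$ is symmetric positive definite, the map sends $\cE_\del^{\xb}(\bm 0)$ bijectively onto $B_\del(\bm 0)$ with Jacobian $\det(M(\xb))$, and by construction $\rho_\del(\xb, M(\xb)\zb) \det(M(\xb)) = \del^{-(d+2)} \gamma(|\zb|/\del)$, a radially symmetric and even weight in $\zb$. Under this substitution, $\yb^{\bm\alpha}$ becomes $(M(\xb)\zb)^{\bm\alpha}$, which is a polynomial in $\zb$ of total degree exactly $|\bm\alpha|$. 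When $|\bm\alpha|$ is odd, every monomial in $\zb$ appearing in this polynomial has odd total degree, so it is odd under $\zb \mapsto -\zb$, while the weight and the domain $B_\del(\bm 0)$ are both even; hence the integral vanishes. This gives the first assertion.

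For the second assertion, I would Taylor expand $u(\xb+\yb) - u(\xb)$ about $\xb$. For $u \in \cP_3(\R^d)$ the expansion terminates exactly at order three, so
\[
u(\xb+\yb) - u(\xb) = \sum_i \partial_i u(\xb)\, y_i + \tfrac{1}{2}\sum_{i,j} \partial_{ij} u(\xb)\, y_i y_j + \tfrac{1}{6}\sum_{i,j,k} \partial_{ijk} u(\xb)\, y_i y_j y_k.
\]
The linear and cubic contributions vanish in $\cL_\del u(\xb)$ by the first part, since their monomials have odd total degree. Therefore only the quadratic part survives, and it remains to show
\[
\int_{\cE_\del^{\xb}(\bm 0)} \rho_\del(\xb,\yb)\, y_i y_j \, d\yb = 2\, a^{ij}(\xb),
\]
after which $\cL_\del u(\xb) = \sum_{i,j} a^{ij}(\xb)\,\partial_{ij} u(\xb) = Lu(\xb)$ follows immediately.

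The second-moment identity is the only calculation that carries any weight, and it is the step I expect to require the most care. Performing the same substitution $\yb = M(\xb)\zb$ gives
\[
\int_{\cE_\del^{\xb}(\bm 0)} \rho_\del(\xb,\yb)\, y_i y_j \, d\yb = \sum_{k,\ell} M_{ik}(\xb) M_{j\ell}(\xb) \int_{B_\del(\bm 0)} \frac{1}{\del^{d+2}} \gamma\!\left(\tfrac{|\zb|}{\del}\right) z_k z_\ell \, d\zb.
\]
The off-diagonal terms ($k \neq \ell$) drop out by the $z_k \mapsto -z_k$ symmetry of the ball and kernel, and the diagonal terms are all equal to $\tfrac{1}{d}\int_{B_\del(\bm 0)} \del^{-(d+2)} \gamma(|\zb|/\del) |\zb|^2 d\zb$ by the rotational invariance of $B_\del(\bm 0)$. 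A rescaling $\zb = \del\wb$ together with the normalization \eqref{eq:kernel_secondmoment} gives this isotropic integral the value $2$. Collecting terms yields $\sum_k M_{ik}(\xb) M_{jk}(\xb) \cdot 2 = 2(MM^T)_{ij}(\xb) = 2\,a^{ij}(\xb)$, using that $M = A^{1/2}$ is symmetric so $MM^T = A$. This closes the argument.
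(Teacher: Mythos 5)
Your proof is correct and follows essentially the same strategy as the paper: reduce to the isotropic ball by the change of variables $\yb = M(\xb)\zb$, kill odd-degree moments by symmetry, and compute the second moment via rotational invariance and the normalization \eqref{eq:kernel_secondmoment} to obtain $2A(\xb)$. The only cosmetic difference is that you organize the final step by Taylor expansion, whereas the paper checks monomials directly; the substance is identical.
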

\begin{proof}
Consider $\xb\in \R^d$ fixed. 
If $|\bm\al |$ is an odd number we have 
\[
\begin{split}
\int_{\cE_\del^{\xb}(\bm{0})} \rho_\del(\xb,\yb) \yb^{\bm\al}  d\yb&=  \int_{\cE_\del^{\xb}(\bm{0})}\frac{1}{\del^{d+2}}\gamma\left(\frac{|M(\xb)^{-1}\yb|}{\del}\right) \det(M(\xb))^{-1}  \yb^{\bm\al} d\yb   \\
&= \int_{B_\del(\bm{0})}\frac{1}{\del^{d+2}}\gamma\left(\frac{|\yb|}{\del}\right)  (M(\xb)\yb)^{\bm\al} d\yb \\
&  =0 .
\end{split}
\]
This last equality above is due to the symmetry of the integration domain and anti-symmetry of the integrand.  

Now it is easy to see that $\cL_\del u(\xb)  = L u(\xb)=0$ when $u$ is a constant or $u(\zb) = (\zb-\xb)^{\bm\al}$ with $|\bm\al|$ being an odd number.
On the other hand, notice that 
\[
\int_{B_\del(\bm{0})} \frac{1}{\del^{d+2}}  \gamma\left(\frac{|\yb|}{\del}\right)  y_i y_j  d\yb = 0 \quad i\neq j
\]
and
\[
\int_{B_\del(\bm{0})} \frac{1}{\del^{d+2}}  \gamma\left(\frac{|\yb|}{\del}\right) y_i^2 d\yb = \frac{1}{d} \int_{B_\del(\bm{0})}  \frac{1}{\del^{d+2}}  \gamma\left(\frac{|\yb|}{\del}\right) |\yb|^2 d\yb =2  \quad \forall i. 
\]
We have 
\[
\begin{split}
\int_{\cE_\del^{\xb}(\bm{0})} \rho_\del(\xb,\yb) (\yb\otimes\yb)  d\yb &=  \int_{\cE_\del^{\xb}(\bm{0})}\frac{1}{\del^{d+2}}\gamma\left(\frac{|M(\xb)^{-1}\yb|}{\del}\right) \det(M(\xb))^{-1}  (\yb\otimes\yb) d\yb \\
& = \int_{B_\del(\bm{0})}\frac{1}{\del^{d+2}}\gamma\left(\frac{|\yb|}{\del}\right) (M(\xb)\yb\otimes M(\xb)\yb) d\yb  \\
&= M(\xb) \int_{B_\del(\bm{0})}\left(\frac{1}{\del^{d+2}}\gamma\left(\frac{|\yb|}{\del}\right) (\yb\otimes \yb) d\yb \right) M(\xb)  \\
&= 2 (M(\xb))^2 = 2 A(\xb),
\end{split}
\]
and thus $\cL_\del u(\xb)  = L u(\xb)$ when $u(\zb) = (\zb-\xb)^{\bm\al}$ with $|\bm\al|=2$. 
Since $\cL_\del(\xb)$ agrees with  $ L u(\xb)$ for  $u(\zb) = (\zb-\xb)^{\bm\al}$ with $|\bm\al|\leq 3$,  \cref{eq:NonlocalLocaloperator} is true. 
\end{proof}

We now consider an open bounded domain $\Om\subset\R^d$. For $\xb\in\Om$ near the boundary of $\Om$, the definition in \cref{NonlocalEllipticOp} requires the values of $u$ outside $\Om$. 
Therefore nonlocal equations on bounded domains are usually accompanied by volumetric constraints (\cite{DGLZ12}) imposed over a boundary layer surrounding $\Om$.   
In our case, we need to define the boundary interaction layer as 
\[
\Om_{\cI_{\del}} =\{ \xb\in\R^d\backslash\Om:  \dist(\xb,\partial\Om)< \del\} . 
\]
We then denote the extended domain $\Om_\del := \Om \cup \Om_{\cI_{\del}}  $.  The consistency of $\cL_\del$ to $L$ is indicated in the following lemma. 

\begin{lem}
\label{lem:truncationerror}
Let $\cL_{\del} u $ be defined by \eqref{NonlocalEllipticOp} and $C>0$ being a generic constant. 
Let $\del_0>0$ be a fixed number. 
\begin{enumerate}
\item If $u\in C^2(\overline{\Om_{\del_0}})$, then $|\cL_{\del} u(\xb) - L u (\xb)| \to 0$ as $\del\to0$ for all $\xb\in \Om$. 
\item If $u\in C^{k,\alpha}(\overline{\Om_{\del_0}})$ for $k=2$ or $3$ and $\alpha\in (0,1]$, then $$|\cL_{\del} u(\xb) - L u (\xb)| \leq C |u|_{C^{k,\alpha}(\overline{\Om_{\del_0}})}\del^{k-2+\alpha}$$  for all $\xb\in \Om$ and $\del\leq \del_0$.  
\end{enumerate}
\end{lem}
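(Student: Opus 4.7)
The plan is to combine Taylor expansion with the exact polynomial-reproduction property established in \Cref{lem:polynomialconsistency}. Fix $\xb \in \Om$ and $\del \leq \del_0$. Since the semi-major axis of $\cE_\del^{\xb}(\bm{0})$ is bounded by $\del$ (because $\Lambda = 1$), the translated integration region $\xb + \cE_\del^{\xb}(\bm{0})$ lies inside $\overline{\Om_{\del_0}}$, so all pointwise Taylor expansions below are justified.

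For part (2) with $k=2$, I would write the second-order Taylor expansion with integral remainder,
\[
u(\xb+\yb) - u(\xb) = \sum_{|\bm\al| = 1} \tfrac{1}{\bm\al!} \partial^{\bm\al} u(\xb)\, \yb^{\bm\al} + \sum_{|\bm\al| = 2} \tfrac{1}{\bm\al!} \partial^{\bm\al} u(\xb)\, \yb^{\bm\al} + R_2(\xb,\yb),
\]
and integrate against $\rho_\del(\xb,\yb)$. The first-order terms vanish by the odd-moment identity of \Cref{lem:polynomialconsistency}, and the second-order terms produce exactly $L u(\xb)$ via the second-moment computation in that same lemma. Thus
\[
\cL_\del u(\xb) - L u(\xb) = \int_{\cE_\del^{\xb}(\bm{0})} \rho_\del(\xb,\yb)\, R_2(\xb,\yb)\, d\yb.
\]
A standard $C^{2,\alpha}$ remainder bound gives $|R_2(\xb,\yb)| \leq C |u|_{C^{2,\alpha}(\overline{\Om_{\del_0}})} |\yb|^{2+\alpha}$. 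For $k=3$ I would expand one order further; the third-order terms again vanish by the odd-moment identity, leaving $|R_3(\xb,\yb)| \leq C |u|_{C^{3,\alpha}} |\yb|^{3+\alpha}$.

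To convert these pointwise bounds into the stated $\del^{k-2+\alpha}$ decay, the change of variable $\yb = M(\xb)\zb$ recasts the integral over the ellipse into an integral over $B_\del(\bm{0})$ with the isotropic kernel $\frac{1}{\del^{d+2}}\gamma(|\zb|/\del)$. The inequality $|\yb| = |M(\xb)\zb| \leq \|M(\xb)\||\zb| \leq |\zb|$ (using $\Lambda = 1$) then yields
\[
\int_{\cE_\del^{\xb}(\bm{0})} \rho_\del(\xb,\yb)\, |\yb|^{k+\alpha}\, d\yb \leq \int_{B_\del(\bm{0})} \tfrac{1}{\del^{d+2}}\gamma(|\zb|/\del)\, |\zb|^{k+\alpha}\, d\zb = C_\gamma\, \del^{k-2+\alpha}
\]
after rescaling $\zb = \del\wb$. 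This delivers the two regularity-indexed bounds in (2).

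For part (1), the same manipulations work but with a softer remainder estimate: $C^2$ regularity only gives $R_2(\xb,\yb) = o(|\yb|^2)$ as $|\yb| \to 0$, uniformly in $\xb$ by uniform continuity of $D^2 u$ on the compact set $\overline{\Om_{\del_0}}$. Concretely, $|R_2(\xb,\yb)| \leq |\yb|^2\, \omega(|\yb|)$ for a modulus of continuity $\omega$ with $\omega(r) \to 0$ as $r \to 0$; plugging this in and using the same change of variables bounds the truncation error by $C_\gamma\, \omega(\del) \to 0$. The main technical point — and the only place where real care is needed — is the change-of-variables step, since one must check that the anisotropic weight $\rho_\del(\xb,\yb)$ reduces to the isotropic kernel cleanly and that the factor $\det(M(\xb))^{-1}$ cancels the Jacobian, a calculation already implicit in the proof of \Cref{lem:polynomialconsistency} and hence not expected to pose serious difficulty.
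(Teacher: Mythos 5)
Your proposal is correct and follows essentially the same route as the paper: odd-moment cancellation from \Cref{lem:polynomialconsistency}, exact reproduction of $Lu$ by the second moment, a H\"older bound on the Taylor remainder, and the change of variables $\yb = M(\xb)\zb$ with $\det(M)^{-1}$ cancelling the Jacobian. The only cosmetic difference is in the final moment estimate: the paper factors out $|\yb|^{k+\alpha}\le \del^{\alpha}|\yb|^{k}$ on the ellipse and then uses the exact value of the (rescaled) second moment, whereas you change variables first and rescale; both are valid (your finite constant $C_\gamma$ exists because $|\wb|^{k+\alpha}\gamma(|\wb|)\le |\wb|^{2}\gamma(|\wb|)$ on $B_1$, which is integrable by \cref{eq:kernel_secondmoment}).
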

\begin{proof}
Notice that 
\[
u(\xb +\yb) - u(\xb) = \int_0^1 \frac{d}{dt} u(\xb + t \yb) dt =  \int_0^1 \nabla u(\xb + t \yb) \cdot \yb dt. 
\]
Therefore 
\[
\begin{split}
\cL_{\del} u(\xb) &=  \int_{\cE_\del^{\xb}(\bm{0})} \rho_\del(\xb,\yb) \yb^T \int_0^1 \nabla u(\xb + t \yb)  dt d\yb \\
&=  \int_{\cE_\del^{\xb}(\bm{0})} \rho_\del(\xb,\yb) \yb^T \cdot\int_0^1 (\nabla u(\xb + t \yb) - \nabla  u(\xb)) dt d\yb 
\end{split}
\]
where we have used $\int_{\cE_\del^{\xb}(\bm{0})} \rho_\del(\xb,\yb) \yb d\yb =\bm{0} $ for the second line above. Then use 
\[
\nabla u(\xb + t \yb) - \nabla  u(\xb) = \int_0^1 D^2 u (\xb + st \yb) \cdot t\yb ds,
\]
we have 
\[
\cL_{\del} u(\xb) =  \int_{\cE_\del^{\xb}(\bm{0})} \rho_\del(\xb,\yb) (\yb\otimes \yb) : \int_0^1 t \int_0^1 D^2 u (\xb + st\yb) ds dt d\yb  
\]
for all $\xb \in \Om$. On the other hand, from calculations in \Cref{lem:polynomialconsistency}, one can show for $\xb\in\Om$, 
\[
\begin{aligned}
L u(\xb) &= \int_{\cE_\del^{\xb}(\bm{0})} \rho_\del(\xb,\yb) \frac{(\yb\otimes \yb)}{2} :  D^2 u (\xb) d\yb\\
&=  \int_{\cE_\del^{\xb}(\bm{0})} \rho_\del(\xb,\yb) (\yb\otimes \yb) : \int_0^1 t \int_0^1 D^2 u (\xb) ds dt d\yb.
\end{aligned}
\]

For $u\in C^2(\overline{\Om_{\del_0}})$, we have  $D^2 u (\xb + st\yb)  \to D^2 u (\xb)$ as $\del\to 0$ since $\yb\in \cE_\del^{\xb}(\bm{0})$. Therefore case $(1)$ holds. 
For $u\in C^{2,\alpha}(\overline{\Om_{\del_0}})$, we have 
\[
|D^2 u (\xb + st \yb) - D^2 u (\xb)|_{\infty} \leq  |u|_{C^{2,\alpha}(\overline{\Om_{\del_0}})}  |st \yb |^{\alpha}. 
\]
So 
\[
\begin{split}
|\cL_{\del} u(\xb)  - L u(\xb) |& \leq C |u|_{C^{2,\alpha}(\overline{\Om_{\del_0}})}    \int_{\cE_\del^{\xb}(\bm{0})} \rho_\del(\xb,\yb)|\yb|^{2+\alpha} d\yb \\
& \leq C |u|_{C^{2,\alpha}(\overline{\Om_{\del_0}})}  \del^{\alpha} \int_{\cE_\del^{\xb}(\bm{0})} \rho_\del(\xb,\yb)|\yb|^{2} d\yb  \\
& =C |u|_{C^{2,\alpha}(\overline{\Om_{\del_0}})}  \del^{\alpha} \int_{B_\del(\bm{0})} \frac{1}{\del^{d+2}} \ga_\del\left(\frac{|\yb|}{\del} \right)\yb^T A(\xb) \yb d\yb  \\
& \leq C|u|_{C^{2,\alpha}(\overline{\Om_{\del_0}})}  \del^{\alpha} \int_{B_\del(\bm{0})} \frac{1}{\del^{d+2}} \ga_\del\left(\frac{|\yb|}{\del} \right) |\yb|^2 d\yb\\
&= 2d C |u|_{C^{2,\alpha}(\overline{\Om_{\del_0}})}  \del^{\alpha} .
\end{split}
\]
Finally, if $u\in C^{3,\alpha}(\overline{\Om_{\del_0}})$, then we can write
\[
\begin{split}
&\quad \ \cL_{\del} u(\xb)  - L u(\xb)\\
&= \int_{\cE_\del^{\xb}(\bm{0})} \rho_\del(\xb,\yb) (\yb\otimes \yb) : \int_0^1 t \int_0^1 (D^2 u (\xb + st \yb) - D^2 u (\xb) ) ds dt d\yb   \\
&= \int_{\cE_\del^{\xb}(\bm{0})} \rho_\del(\xb,\yb) (\yb\otimes \yb\otimes \yb):  \int_0^1 t^2 \int_0^1  s \int_0^1  D^3 u (\xb + rst \yb) dr ds dt d\yb \\
&= \int_{\cE_\del^{\xb}(\bm{0})} \rho_\del(\xb,\yb) (\yb\otimes \yb\otimes \yb):  \int_0^1 t^2 \int_0^1  s \int_0^1 \left( D^3 u (\xb + rst \yb)  - D^3 u(\xb) \right) dr ds dt d\yb . 
\end{split}
\]
Therefore, by the same reasoning as before, we have 
\[
|\cL_{\del} u(\xb)  - L u(\xb) | \leq C |u|_{C^{3,\alpha}(\overline{\Om_{\del_0}})}    \int_{\cE_\del^{\xb}(\bm{0})} \rho_\del(\xb,\yb)|\yb|^{3+\alpha} d\yb \leq 2dC |u|_{C^{3,\alpha}(\overline{\Om_{\del_0}})}  \del^{1+\alpha}.  
\]
\end{proof} 


\section{Meshfree discretization}
\label{sec:meshfree}

Meshfree methods have been widely used in simulations of scientific problems. 
There are many existing meshfree approaches for solving PDEs on scattered datasets, such as vortex methods \cite{cottet2000vortex}, smooth particle hydrodynamics (SPH) \cite{gingold1977smoothed}, radial basis functions (RBF)\cite{buhmann2003radial}, moving least squares (MLS) / generalized moving least squares (GMLS) \cite{lancaster1981surfaces,mirzaei2012generalized,wendland2004scattered}, and reproducing kernel particle method (RKPM) \cite{liu1996overview}. 
The key idea in MLS and GMLS is a local fitting of data using least squares approximation. 
They can also be written as a weighted $l_2$-type local optimization under certain reproducing conditions. 
In \cite{seibold2008minimal,davydov2018minimal}, weighted $l_1$-type optimization was discussed for the sparsity of stencils. 
It was shown that using $l_1$-type optimization, the number of nonzero weights generated is at most the number of constraints in the reproducing condition. 
Such property is important to keep the linear system sparse, especially when the elliptic problem is nearly degenerate ($\varrho\ll 1$), in the case of which 
the searching region becomes large (cf.~\Cref{thm:nbh_criterion}). 
For the rest of this section, we discuss the meshfree method for solving \cref{eq:elliptic} based on the nonlocal relaxation and the convergence of the numerical method. 

\subsection{Optimization based meshfree discretization}
Our numerical method is inspired by the meshfree finite difference method presented in \cite{seibold2008minimal} for solving the classical Poisson equation. 
The focus here is on the generation of positive stencils which lead to monotone schemes. 
The desirability of positive stencils was observed in other meshless methods, see e.g., \cite{demkowicz1984some,liszka1996hp}, although there was no guarantee of positive stencils in these works.  
We now present a reformulation of the meshfree method in \cite{seibold2008minimal} as a nonlocal relaxation method on which a generalization to elliptic equations is based. 
Given a point cloud $X= \{\xb_i\} \subset \R^d$ with $h$ being its associated fill distance to be defined later, it is proposed in  \cite{seibold2008minimal} to discretize the Laplace operator by
\beq
\label{eq:laplace_approx}
\Delta u (\xb_i)\approx \Delta_h u (\xb_i) = \sum_{\xb_j \in B_\del(\xb_i)} \beta_{j,i} (u(\xb_j) - u(\xb_i)),
\eeq
where the weights $\{ \beta_{j,i}\}$ are determined by the following linear minimization problem in order to achieve the so-called minimal positive stencils:
\beq
\label{eq:Seibold_original}
\begin{aligned}
\{ \beta_{j,i}\} &=  \argmin \sum_{j} \frac{\beta_{j, i}}{W(|\xb_j - \xb_i|)} \\
\text{s.t. } \beta_{j,i}\geq 0 &\text{ and } \Delta_h u (\xb_i) = \Delta u (\xb_i)\;  \forall u \in \cP_p(\R^d) 
\end{aligned}
\eeq
In \cite{seibold2008minimal}, the polynomial space $\cP_p(\R^d)$ is taken to be $\cP_2(\R^d)$ with $p=2$. 
The parameter $\del$ in \cref{eq:laplace_approx} is determined in relation to $h$ such that the feasible set of the minimization problem is non-empty. 
The weight function $W(r)$ is suggested in \cite{seibold2008minimal} as $W(r) = r^{-\alpha}$ for $\alpha>0$. It is not hard to see that when we choose the nonlocal kernel function 
$\ga(r) = C r^{-\alpha} \chi_{\{ |r|<1\}} $ that satisfies \cref{eq:kernel_secondmoment}, then  by letting $ \beta_{j,i} =  \frac{1}{\del^{d+2}} \ga\left(\frac{|\xb_j -\xb_i|}{\del}\right)\om_{j,i} $,  
the minimization problem \eqref{eq:Seibold_original} is equivalent to 
\beq
\label{eq:Seibold_equiv}
\begin{aligned}
\{ \om_{j,i}\} &=  \argmin   \sum_{j} \om_{j, i} \\
\text{s.t. } \om_{j,i}\geq 0 &\text{ and } \widetilde{\cL}_{\del}^h u (\xb_i) = \widetilde{\cL}_\del u (\xb_i)\;  \forall u \in \cP_p(\R^d).
\end{aligned}
\eeq
where $ \widetilde{\cL}_{\del}^h$ is the nonlocal approximate operator defined by 
\beq
\widetilde{\cL}_{\del}^h  u(\xb_i) = \sum_{\xb_j \in B_\del(\xb_i)}  \frac{1}{\del^{d+2}} \ga\left(\frac{|\xb_j -\xb_i|}{\del}\right)\om_{j,i} (u(\xb_j) - u(\xb_i)). 
\eeq 
Based on this observation, we propose the discretization of nonlocal elliptic operator \eqref{NonlocalEllipticOp} as 
\beq
\label{eq:nonlocaldiscrete}
\cL_{\del}^h u(\xb_i)= \sum_{\xb_j \in \cE_\del^{\xb_i}(\xb_i)} \rho_\del(\xb_i,\xb_j -\xb_i)   \om_{j,i} (u(\xb_j) - u(\xb_i)). 
\eeq
where $\rho_\del(\xb_i,\xb_j -\xb_i)=\frac{1}{\del^{d+2}} \ga\left(\frac{|M(\xb_i)^{-1}(\xb_j -\xb_i)|}{\del}\right) \det(M(\xb_i))^{-1} $.
The weights $\{ \om_{j,i}\}$ in \cref{eq:nonlocaldiscrete} are solved by the minimization problem
\beq
\label{eq:nonlocalminimization}
\begin{aligned}
\{ \om_{j,i}\} &=  \argmin   \sum_{j} \om_{j, i} \\
\text{s.t. } \om_{j,i}\geq 0 &\text{ and }  \cL_{\del}^h  u (\xb_i)=\cL_\del u (\xb_i) \;  \forall u \in \cP_p(\R^d).  
\end{aligned}
\eeq
The well-posedness \eqref{eq:nonlocalminimization} is guaranteed only if the feasible set is non-empty. 
We will discuss in \Cref{subsec:neighborhood_criteria} the neighborhood criteria for  non-emptiness of the feasible set 
\beq
\label{eq:feasibleset}
S_{\del,h,p}(\xb_i):= \left\{ \{ \om_{j,i}\}: \om_{j,i}\geq 0 \text{ and } \cL_{\del}^h  u (\xb_i) = \cL_\del u (\xb_i) \;  \forall u \in \cP_p(\R^d) \right\}. 
\eeq

\begin{remark}
Notice that the constrained optimization problem presented in \cref{eq:nonlocalminimization} can be categorized as a linear programming problem. This type of problem can be efficiently solved using the simplex method \cite{dantzig1990origins}. Moreover, $\mathcal{L}_{\delta}$ coincide with $L$ when $p \leq 3$. 
\end{remark}

\subsection{Boundary treatment}
\label{subsec:boundarytreatment}
For an open and bounded domain $\Om\subset \R^d$, we take a point cloud $X =\{ \xb_i\}_{i=1}^M \subset \Om_\del$ and define its associated fill distance 
\beq
\label{eq:filldistance}
h := \sup_{\xb \in \Om_\del} \min_{1\leq i\leq M} |\xb-\xb_i| 
\eeq
following the convention in \cite{wendland2004scattered}. 
Assume that $\{ \xb_i\}_{i=1}^N\subset \Om$. 
For $\xb_i\in\Om$ near the boundary of $\partial\Om$, the defintion in \cref{eq:nonlocaldiscrete} may require the value of $u(\xb_j)$ for $\xb_j \in \Om_{\cI_\del}$. 
Therefore, extensions of the boundary values from $\partial\Om$ to $ \Om_{\cI_\del}$ are needed. 
However, it is usually hard to find an easy way to do the extension, especially in higher dimensions, to guarantee a second-order convergence rate for nonlocal solutions. 
We propose an alternative way for the boundary treatment. 

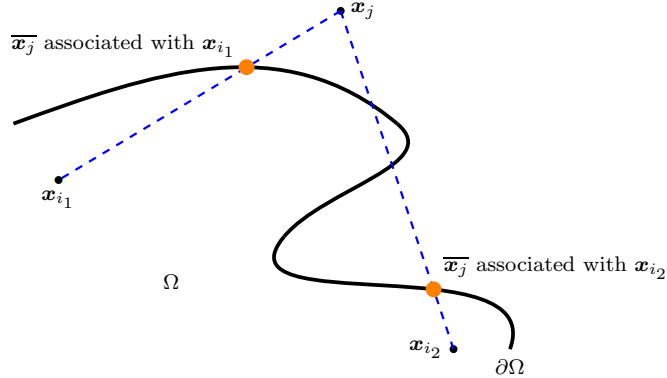
\begin{figure}[htp]
    \centering
    \begin{tikzpicture}[scale=1.5]
    \coordinate (A) at (-2.4, 1);
    \coordinate (B) at (1, 1);
    \coordinate (C) at (0, 0);
    \coordinate (D) at (2, -1);
    \draw[line width=0.5mm, name path=A--B] (A) to[out=20, in=140] (B);
    \draw[line width=0.5mm, name path=B--C] (B) to[out=-40, in=50] (C);
    \draw[line width=0.5mm, name path=D--C] (D) to[out=70, in=230, distance=1cm] (C);

    \coordinate (x1) at (-2, 0.5);
    \coordinate (x2) at (0.5, 2);
    \coordinate (x3) at (1.5, -1);
    \fill (x1) circle (1pt);
    \fill (x2) circle (1pt);
    \fill (x3) circle (1pt);
    \node[below] at (x1) {$\scriptstyle \boldsymbol{x}_{i_{1}}$};
    \node[right] at (x2) {$\scriptstyle \boldsymbol{x}_{j}$};
    \node[left] at (x3) {$\scriptstyle \boldsymbol{x}_{i_{2}}$};

    \draw[blue, dashed, line width=0.3mm, name path=2--1] (x2) -- (x1);
    \draw[blue, dashed, line width=0.3mm, name path=2--3] (x2) -- (x3);

    \path[name intersections={of=A--B and 2--1, by=E}];
    \fill[orange] (E) circle (2pt);
    \node[above left] at (E) {$\scriptstyle \overline{\boldsymbol{x}_{j}} \text{ associated with } \boldsymbol{x}_{i_{1}}$};

    \path[name intersections={of=D--C and 2--3, by=F}];
    \fill[orange] (F) circle (2pt);
    \node[above right] at (F) {$\scriptstyle \overline{\boldsymbol{x}_{j}} \text{ associated with } \boldsymbol{x}_{i_{2}}$};

    \node[below] at (D) {$\scriptstyle \partial \Omega$};
    \node[below] at (-1, -0.25) {$\scriptstyle \Omega$};

    \end{tikzpicture}
    \caption{Illustration of the projection}
    \label{fig:projection}
\end{figure}
For $\xb_i \in \Om$ and $\xb_j \in \cE_\del^{\xb_i}(\xb_i) $, we define $\overline{\xb_j} = \xb_j$ if $\xb_j \in \overline{\Om}$, otherwise $\overline{\xb_j} \in \partial\Om$ is defined as the projection of $\xb_j$ onto $ \partial\Om$ such that the line from $\xb_i $ to $\overline{\xb_j}$ is contained in $\overline{\Om}$. 
Notice that the projected point $\overline{\xb_j}$ depends on both $\xb_j$ and $\xb_i$. Here for notational convenience, we have omitted the dependence on $i$ and simply denoted the projected point as $\overline{\xb_j}$. 
See \Cref{fig:projection} as an illustration of the projection. 
We then define the approximate operator associated with $\del$, $h$ and $\Om$ as 
\beq
\label{NonlocalEllipticOpBdry}
\cL_{\del,\Om}^h u(\xb_i)= \sum_{\xb_j \in \cE_\del^{\xb_i}(\xb_i)} \rho_\del(\xb_i, \overline{\xb_j} -\xb_i) \om_{j,i} (u(\overline{\xb_j}) - u(\xb_i))
\eeq
where $\{ \om_{j, i}\}$ is solved from 
\beq
\label{eq:nonlocalminimization_Om}
\{ \om_{j,i}\} = \underset{\{ \om_{j,i}\} \in \overline{S}_{\del,h,p}(\xb_i)}{\argmin} \sum_{j} \om_{j, i}
\eeq
where $ \overline{S}_{\del,h,p}(\xb_i)$ is the feasible set defined as 
\beq
\label{eq:feasibleset_Om}
\overline{S}_{\del,h,p}(\xb_i):= \left\{ \{ \om_{j,i}\}: \om_{j,i}\geq 0 \text{ and } \cL_{\del,\Om}^h  u (\xb_i) = \cL_\del u (\xb_i) \;  \forall u \in \cP_p(\R^d) \right\}. 
\eeq
We will address in  \Cref{subsec:neighborhood_criteria} the feasibility of the minimization problem \eqref{eq:nonlocalminimization_Om}. 

With the definition of the discrete operator $\cL_{\del,\Om}^h$, we define the discrete problem as to find a function $u_\del^h : \{\xb_i\}_{i=1}^N \cup\partial\Om \to \R$ such that
\beq
\label{eq:elliptic_discrete}
\left\{ 
\begin{aligned}
- \cL_{\del,\Om}^h u_{\del}^h (\xb_i) &= f(\xb_i) \quad \xb_i \in \Om \\
u_{\del}^h(\xb) &= g(\xb) \quad \xb \in \partial\Om 
\end{aligned}
\right.
\eeq
Notice that in the above equation, the boundary condition is imposed on the boundary set $\partial\Omega$. The discrete problem is well-posed provided that the weights $\{ \omega_{j,i}\}$ in \cref{NonlocalEllipticOpBdry} are nonnegative. In the subsequent subsections, we will first prove the discrete maximum principle for \cref{eq:elliptic_discrete} (which implies the well-posedness of the discrete system) assuming the existence of nonnegative weights, and then discuss sufficient conditions to find the positive stencils.

\subsection{Convergence analysis}

We first provide a truncation error analysis for the discrete operator $\cL_{\del,\Om}^h$ and then the convergence is followed by the monotonicity of the numerical scheme. In this subsection, the errors are presented in terms of $\del$, which denotes the upper bound of the semi-major axis of the elliptical neighborhood of each point. Following the neighborhood criteria in \Cref{subsec:neighborhood_criteria}, the convergence errors are finally presented in terms of $h$.  

\begin{lem}
\label{lem:truncationerror_discrete}
Take a point cloud $X =\{ \xb_i\}_{i=1}^M \subset \Om_\del$ with $\{ \xb_i\}_{i=1}^N \subset \Om$. 
Assume also that $\overline{S}_{\del,h,p}(\xb_i)$ is not empty and $C>0$ is a generic constant. 
\begin{enumerate}
\item If $p\geq 2$ and $u\in C^2(\overline{\Om})$, then $|\cL_{\del,\Om}^h u(\xb_i) - L u (\xb_i)| \to 0$ as $\del\to0$ for all $\xb_i\in \Om$. 
\item If $p\geq 2$ and $u\in C^{2,\alpha}(\overline{\Om})$ for $\alpha\in (0,1]$, then   $|\cL_{\del,\Om}^h u(\xb_i) - L u (\xb_i)| \leq C |u|_{C^{2,\alpha}(\overline{\Om})}\del^{\alpha}$ for all $\xb_i\in \Om$.
\item If $p\geq 3$ and $u\in C^{3,\alpha}(\overline{\Om})$ for $\alpha\in (0,1]$, then   $|\cL_{\del,\Om}^h u(\xb_i) - L u (\xb_i)| \leq C |u|_{C^{3,\alpha}(\overline{\Om})}\del^{1+\alpha}$  for all $\xb_i\in \Om$.
\end{enumerate}
\end{lem}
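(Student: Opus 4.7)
The plan is to split the error by the triangle inequality,
\[
|\cL_{\del,\Om}^h u(\xb_i) - Lu(\xb_i)| \;\leq\; |\cL_{\del,\Om}^h u(\xb_i) - \cL_\del u(\xb_i)| \;+\; |\cL_\del u(\xb_i) - Lu(\xb_i)|,
\]
and treat the two pieces separately. The second (nonlocal-to-local) term is exactly what \Cref{lem:truncationerror} already controls, yielding $o(1)$, $\cO(\del^{\al})$, and $\cO(\del^{1+\al})$ under the hypotheses of cases (1), (2), and (3). All the real work goes into the first (meshfree) term, which I would bound by exploiting the polynomial reproduction built into the feasibility set $\overline{S}_{\del,h,p}(\xb_i)$.

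For cases (2) and (3), I would take $q=2$ and $q=3$ respectively and write $u = T_q + R_q$, where $T_q \in \cP_q(\R^d)$ is the Taylor polynomial of $u$ at $\xb_i$ of degree $q$, whose remainder satisfies $|R_q(\zb)| \leq C|u|_{C^{q,\al}(\overline{\Om})}|\zb - \xb_i|^{q+\al}$ and $R_q(\xb_i)=0$. Since $q \leq p$, the reproduction constraint gives $\cL_{\del,\Om}^h T_q(\xb_i) = \cL_\del T_q(\xb_i)$, so the Taylor polynomial parts cancel and
\[
\cL_{\del,\Om}^h u(\xb_i) - \cL_\del u(\xb_i) \;=\; \cL_{\del,\Om}^h R_q(\xb_i) - \cL_\del R_q(\xb_i).
\]
It then suffices to bound each remainder piece by $C|u|_{C^{q,\al}(\overline{\Om})}\del^{q+\al-2}$, which matches the rates in the lemma ($\del^{\al}$ for $q=2$, $\del^{1+\al}$ for $q=3$).

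For $|\cL_\del R_q(\xb_i)|$, I would plug in the remainder bound and use $|\yb| \leq \del$ on $\cE_\del^{\xb_i}(\bm 0)$, reducing matters to the weighted second moment $\int \rho_\del(\xb_i,\yb)|\yb|^2 \, d\yb \leq 2d$, which follows from \eqref{eq:kernel_secondmoment} together with $\La = 1$. For $|\cL_{\del,\Om}^h R_q(\xb_i)|$, one observes that $\overline{\xb_j}$ lies on the segment from $\xb_i$ to $\xb_j \in \cE_\del^{\xb_i}(\xb_i)$, so $\overline{\xb_j} \in \overline{\Om}$ and $|\overline{\xb_j} - \xb_i| \leq |\xb_j - \xb_i| \leq \del$, leading to
\[
|\cL_{\del,\Om}^h R_q(\xb_i)| \;\leq\; C|u|_{C^{q,\al}(\overline{\Om})}\del^{q+\al-2} \sum_j \rho_\del(\xb_i,\overline{\xb_j}-\xb_i)\,\om_{j,i}\,|\overline{\xb_j}-\xb_i|^2.
\]
The key step is to invoke the reproduction identity a \emph{second} time, now on the degree-two polynomial $\zb \mapsto |\zb - \xb_i|^2 \in \cP_2(\R^d)$, which collapses the remaining weighted sum back to $\int \rho_\del(\xb_i,\yb)|\yb|^2 \, d\yb \leq 2d$. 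This is where $p \geq 2$ enters as a genuine requirement.

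Case (1) is the soft version of the same argument: with $u \in C^2(\overline{\Om})$ the H\"older remainder bound is replaced by a uniform modulus-of-continuity estimate for $D^2 u$ on the compact set $\overline{\Om}$, and the same cancellation-plus-reproduction recipe shows both remainder pieces vanish as $\del \to 0$. The only subtlety to monitor throughout is the presence of the projected point $\overline{\xb_j}$ rather than $\xb_j$: one must confirm $\overline{\xb_j} \in \overline{\Om}$ (immediate from the projection definition) and $|\overline{\xb_j} - \xb_i| \leq \del$, after which no new ingredients are needed. I expect the main conceptual step to be the double use of the reproduction constraint --- first to cancel the Taylor polynomial, then to tame the weighted second moment of $|\overline{\xb_j} - \xb_i|$ via the test polynomial $|\zb - \xb_i|^2$.
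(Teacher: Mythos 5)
Your core algebra is sound, and in fact it is the same moment-matching fact the paper leans on: the reproduction constraint cancels the degree-$q$ Taylor polynomial of $u$ at $\xb_i$, and applying the constraint once more to $\zb\mapsto|\zb-\xb_i|^2\in\cP_2(\R^d)$ collapses the weighted sum $\sum_j\rho_\del\,\om_{j,i}\,|\overline{\xb_j}-\xb_i|^2$ back to $\int\rho_\del(\xb_i,\yb)|\yb|^2\,d\yb=2\,\mathrm{tr}\,A(\xb_i)\leq 2d$.

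There is, however, a genuine gap in the route through $\cL_\del u$. The decomposition
$|\cL_{\del,\Om}^h u-Lu|\le|\cL_{\del,\Om}^h u-\cL_\del u|+|\cL_\del u-Lu|$
introduces $\cL_\del u(\xb_i)$, which is an integral over the full ellipse $\cE_\del^{\xb_i}(\bm0)$ and therefore requires $u$ to be defined (and H\"older-regular) on $\Om_\del\supsetneq\overline{\Om}$. Concretely, bounding the second term by \Cref{lem:truncationerror} needs $u\in C^{k,\alpha}(\overline{\Om_{\del_0}})$, and estimating $\cL_\del R_q(\xb_i)=\int_{\cE_\del^{\xb_i}(\bm0)}\rho_\del(\xb_i,\yb)R_q(\xb_i+\yb)\,d\yb$ needs $R_q$, hence $u$, at points $\xb_i+\yb$ that can lie in $\Om_{\cI_\del}$. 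But the lemma only grants $u\in C^{k,\alpha}(\overline{\Om})$. This is precisely why the paper avoids the intermediary $\cL_\del u$: it reruns the integral-representation argument of \Cref{lem:truncationerror} with the integral replaced by the discrete sum over $\{\overline{\xb_j}\}$, and since each $\overline{\xb_j}\in\overline{\Om}$ and the segment from $\xb_i$ to $\overline{\xb_j}$ is contained in $\overline{\Om}$ by the construction of the projection, the Taylor-type expansion of $u(\overline{\xb_j})-u(\xb_i)$ never evaluates $D^2u$ (or its H\"older modulus) outside $\overline{\Om}$. So the direct comparison $\cL_{\del,\Om}^h u$ vs.\ $Lu$ is legitimate under the stated hypothesis, whereas your split is not. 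You could repair your argument by first invoking a bounded Whitney-type extension $C^{k,\alpha}(\overline{\Om})\to C^{k,\alpha}(\R^d)$ (which leaves $\cL_{\del,\Om}^h u$ and $Lu(\xb_i)$ unchanged and only inflates the constant), but as written the proposal silently assumes regularity on a strictly larger set than the hypothesis provides.
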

\begin{proof}
Consider a fixed $\xb_i \in \Om$. 
The proof follows closely from the proof of \Cref{lem:truncationerror} by noticing that $\{\om_{j,i}\} \in \overline{S}_{\del,h,p}(\xb_i) $ implies that
\beq
\int_{ \cE_\del^{\xb_i}(\bm{0})} \rho_\del(\xb_i,\yb) \yb^{\bm \al} d\yb = \sum_{\xb_j \in \cE_\del^{\xb_i}(\xb_i)} \rho_\del(\xb_i,\overline{\xb_j} -\xb_i) (\overline{\xb_j} -\xb_i)^{\bm \al} \om_{j,i} 
\eeq
for $\bm\al = (\al_1, \al_2,\cdots ,\al_d) \in (\Z^+\cup\{ 0\})^d$ with $|\bm\al | = \sum_i \al_i\leq p $. 
\end{proof}

\begin{remark}
\label{rmk:truncationerror}
In practice, we often observe superconvergence for $p=2$, which is likely due to symmetry.  
When $p=2$ and $u\in C^{3,\alpha}(\overline{\Om})$, a more precise error estimate for $\xb_i\in  \Om$ is given by
\[
|\cL_{\del,\Om}^h u(\xb_i) - L u (\xb_i)| \leq C \left(|u|_{C^{3}(\overline{\Om})}  T_3(\xb_i) + |u|_{C^{3,\alpha}(\overline{\Om})}\del^{1+\alpha}  \right)
\]
where $T_3(\xb_i)=\max_{|\bm\al|=3} |\sum_{ \xb_j \in \cE_\del^{\xb_i}(\xb_i)} \rho_\del(\xb_i,\overline{\xb_j} -\xb_i) (\overline{\xb_j} -\xb_i)^{\bm \al} \om_{j,i}  | $
\end{remark}

\begin{lem}[Discrete maximum principle]
\label{lem:DMP}
Let $\Om\subset\R^d$ be an open, bounded and simply connected domain. Take a point cloud $X =\{ \xb_i\}_{i=1}^M \subset \Om_\del$ with $\{ \xb_i\}_{i=1}^N \subset \Om$. 
Assume that there exists $\xb_i\in \Om$ such that $\cE_\del^{\xb_i}(\xb_i) \cap \Om^c \neq \emptyset$. 
If $u\in C(\{ \xb_i\}_{i=1}^N \cup \partial\Om)$ and $\cL_{\del,\Om}^h u (\xb_i) \geq 0$ for all $\xb_i\in \Om$, then 
\[
\max_{\xb_i\in\Om} u(\xb_i) \leq \max_{\xb\in\partial\Om}  u(\xb). 
\]
\end{lem}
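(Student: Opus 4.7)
The plan is to argue by contradiction, leveraging the nonnegativity of the weights $\{\om_{j,i}\}$ together with the polynomial reproduction inherent in $\{\om_{j,i}\} \in \overline{S}_{\del,h,p}(\xb_i)$ (I will need $p \geq 2$, which is standing throughout this section). Suppose for contradiction that $M := \max_{\xb_i \in \Om} u(\xb_i) > B := \max_{\xb \in \partial\Om} u(\xb)$, and let $S := \{\xb_i \in \Om : u(\xb_i) = M\}$, which is a finite non-empty set.

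First I would show that $S$ absorbs its positive-weight neighbors. For any $\xb_{i^*} \in S$, the hypothesis $\cL_{\del,\Om}^h u(\xb_{i^*}) \geq 0$ reads $0 \leq \sum_j \rho_\del(\xb_{i^*}, \overline{\xb_j} - \xb_{i^*})\, \om_{j,i^*}\, (u(\overline{\xb_j}) - M)$. Since $\rho_\del > 0$ on its support, $\om_{j,i^*} \geq 0$, and every $u(\overline{\xb_j}) - M \leq 0$ by definition of $M$, each term must vanish, so $u(\overline{\xb_j}) = M$ for every $j$ with $\om_{j,i^*} > 0$. If for some $\xb_{i^*} \in S$ this produces a positive-weight neighbor with $\overline{\xb_j} \in \partial\Om$, we conclude $M = u(\overline{\xb_j}) \leq B$, contradicting $M > B$. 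Hence we may assume that for every $\xb_{i^*} \in S$, every positive-weight neighbor satisfies $\overline{\xb_j} = \xb_j \in \Om$ and therefore $\xb_j \in S$.

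In this remaining case, I would pick $\xb_{i^*} \in S$ maximizing the first coordinate $(\cdot)_1$ over $S$ and exploit reproduction in the $x_1$ direction. Writing $d_j := (\overline{\xb_j})_1 - (\xb_{i^*})_1$ and $w_j := \rho_\del(\xb_{i^*}, \overline{\xb_j} - \xb_{i^*})\, \om_{j,i^*} \geq 0$, applying the linear reproduction to $v(\zb) = z_1 - (\xb_{i^*})_1$ and invoking \Cref{lem:polynomialconsistency} gives $\sum_j w_j d_j = \cL_{\del,\Om}^h v(\xb_{i^*}) = \cL_\del v(\xb_{i^*}) = L v(\xb_{i^*}) = 0$, while extremality forces $d_j \leq 0$ on the support of $\{w_j\}$; hence $d_j = 0$ for every $j$ with $w_j > 0$. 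Applying reproduction instead to $v(\zb) = (z_1 - (\xb_{i^*})_1)^2$ yields $\sum_j w_j d_j^2 = L v(\xb_{i^*}) = 2\, a^{11}(\xb_{i^*}) > 0$ by uniform ellipticity, which is incompatible with $d_j = 0$ on the support of $\{w_j\}$, closing the contradiction.

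The main obstacle I anticipate is arranging the dichotomy cleanly: the absorption step must be performed at every $\xb_{i^*} \in S$ simultaneously, so that a single boundary-projected neighbor anywhere in $S$ triggers the direct contradiction, while in its absence the propagation traps $S$ strictly inside $\Om$ and the extremal-coordinate argument can close the loop. Simple connectedness of $\Om$ and the standing assumption that some interior ellipse meets $\Om^c$ enter implicitly through well-posedness of the projection $\overline{\xb_j}$ and non-vacuousness of the conclusion; the proof itself is driven by the strict positive-definiteness $a^{11}(\xb_{i^*}) \geq \la > 0$ and the linear/quadratic reproduction constraints.
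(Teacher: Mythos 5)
Your proof is correct, and it takes a genuinely different route from the paper. The paper argues by propagation: starting from an interior maximizer $\xb_k$, it shows $u$ equals the max at the positive-weight neighbors of $\xb_k$, then asserts (``Continue this process\ldots'') that $u$ is constant on all of $\{\xb_i\}_{i=1}^N$, and finally invokes the hypothesis that some $\cE_\del^{\xb_i}(\xb_i)$ meets $\Om^c$ to touch the boundary and close the contradiction. That propagation step implicitly requires the ``positive-weight graph'' on the interior nodes to be connected and the boundary-touching step implicitly requires the relevant boundary stencil weight to be nonzero --- neither of which is spelled out in the paper. Your argument replaces the whole propagation chain with a local, coordinate-extremal maximizer: once every positive-weight neighbor of that $\xb_{i^*}$ is trapped in $S$ (and hence has nonpositive first-coordinate displacement $d_j$), exact reproduction of $z_1-(\xb_{i^*})_1$ forces $d_j=0$ on the support of the weights, and exact reproduction of $(z_1-(\xb_{i^*})_1)^2$ then gives $0 = \sum_j w_j d_j^2 = 2a^{11}(\xb_{i^*}) \geq 2\la > 0$, a contradiction. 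This is tighter: it needs no connectivity assumption and does not even use the hypothesis that some ellipse escapes $\Om$, though it does require $p\geq 2$ in the feasibility set (the standing setting in this section, since the scheme is built with $p=2$) so that the quadratic reproduction constraint is available; the paper's version uses only the sign structure of the scheme and none of the reproduction identities. One small remark: the initial dichotomy is not strictly necessary --- you can pick $\xb_{i^*}\in S$ with maximal first coordinate from the outset, handle the boundary-neighbor subcase there, and proceed directly, which streamlines the write-up.
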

\begin{proof}
First notice that $\max_{\xb\in\partial\Om}  u(\xb)$ is well-defined since $\partial\Om$ is a closed set and $u$ is continuous on $\partial\Om$.
Assume that $\max_{\xb_i\in\Om} u(\xb_i)  >\max_{\xb\in\partial\Om}  u(\xb)$, then there exists $\xb_k \in\Om$ such that 
\[
u(\xb_k) = \max_{\xb_i\in\Om} u(\xb_i) \geq u(\xb) \quad \forall \xb\in \{ \xb_i\}_{i=1}^N \cup \partial\Om.  
\]
Therefore 
\[
\cL_{\del,\Om}^h u (\xb_k) \leq 0.  
\]
By the assumption, we must have $\cL_{\del,\Om} u (\xb_k)=0$ and $u(\xb_j) = u(\xb_k)$ for $\xb_j \in \cE_\del^{\xb_k}(\xb_k) \cap\Om$. 
Continue this process we can shown that $u$ is constant on $\{ \xb_i\}_{i=1}^N \subset\Om$. 
Choose $\xb_i \in \Om$ such that there exists $\xb_j \in  \cE_\del^{\xb_i}(\xb_i) \cap \Om^c$, so $\overline{\xb_j}\in \partial\Om$. 
However, since we can argue that $\cL_{\del,\Om}^h u (\xb_i) = 0$, it implies $u(\xb_i ) =  u(\overline{\xb_j}) $, which is contradiction to  $\max_{\xb_i\in\Om} u(\xb_i)  >\max_{\xb\in\partial\Om}  u(\xb)$. 
\end{proof}

\begin{thm}
\label{thm:convergence}
Take a point cloud $X =\{ \xb_i\}_{i=1}^M \subset \Om_\del$ and assume that $\overline{S}_{\del,h,p}(\xb_i)$ is not empty.  
Let $u$ and $u_\del^h$ be the solutions to \cref{eq:elliptic,eq:elliptic_discrete} respectively.
\begin{enumerate}
\item If $p\geq 2$ and $u\in C^{2,\alpha}(\overline{\Om})$ for $\alpha\in (0,1]$, then  
\[
\max_{\xb_i\in\Om}|u(\xb_i) - u_\del^h(\xb_i)| \leq  C |u|_{C^{2,\alpha}(\overline{\Om})}\del^{\alpha}. 
\] 
\item If $p\geq 3$ and $u\in C^{3,\alpha}(\overline{\Om})$ for $\alpha\in (0,1]$, then   
\[
\max_{\xb_i\in\Om}|u(\xb_i) - u_\del^h(\xb_i)| \leq C |u|_{C^{3,\alpha}(\overline{\Om})}\del^{1+\alpha}. 
\]
\end{enumerate}
\end{thm}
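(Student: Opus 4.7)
The plan is the classical consistency-plus-stability argument, adapted to the nonlocal meshfree operator. Set $e(\xb_i):=u(\xb_i)-u_\del^h(\xb_i)$. Since $-Lu=f$ in $\Om$, $-\cL_{\del,\Om}^h u_\del^h(\xb_i)=f(\xb_i)$ at every interior node, and $e\equiv 0$ on $\partial\Om$, linearity of $\cL_{\del,\Om}^h$ yields the discrete error equation
\[
\cL_{\del,\Om}^h e(\xb_i) = \cL_{\del,\Om}^h u(\xb_i)-Lu(\xb_i)=:\tau_i,
\]
for every interior node $\xb_i$. The right-hand side is exactly the truncation error controlled by \Cref{lem:truncationerror_discrete}: in case (1), $|\tau_i|\leq C|u|_{C^{2,\al}(\overline\Om)}\del^{\al}$, and in case (2), $|\tau_i|\leq C|u|_{C^{3,\al}(\overline\Om)}\del^{1+\al}$. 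Denote the uniform bound by $M_\tau$.

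Next I would construct a quadratic barrier. Pick $R>0$ with $\overline\Om\subset B_R(\bm 0)$ and define
\[
w(\xb):=\frac{1}{2d\la}\bigl(R^2-|\xb|^2\bigr),
\]
which is nonnegative on $\overline\Om$ and bounded by $R^2/(2d\la)$. Direct differentiation gives $Lw(\xb)=-\mathrm{tr}(A(\xb))/(d\la)\leq -1$ on $\Om$, using the ellipticity bound $\mathrm{tr}(A(\xb))\geq d\la$. Because $w\in\cP_2(\R^d)\subset\cP_p(\R^d)$ for $p\geq 2$, the feasibility constraint defining $\overline{S}_{\del,h,p}(\xb_i)$ combined with \Cref{lem:polynomialconsistency} gives
\[
\cL_{\del,\Om}^h w(\xb_i) = \cL_\del w(\xb_i) = Lw(\xb_i) \leq -1
\]
at every interior $\xb_i$, so the barrier is reproduced exactly by the discrete operator.

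The final step is to apply the discrete maximum principle (\Cref{lem:DMP}) to the comparison functions $v^\pm(\xb):=\pm e(\xb)-M_\tau w(\xb)$. By linearity together with the bounds just established,
\[
\cL_{\del,\Om}^h v^\pm(\xb_i) = \pm\tau_i - M_\tau Lw(\xb_i) \geq -M_\tau + M_\tau = 0 \qquad \forall\,\xb_i\in\Om,
\]
while on $\partial\Om$ the values $e=0$ and $w\geq 0$ give $v^\pm=-M_\tau w\leq 0$. \Cref{lem:DMP} then forces $v^\pm\leq 0$ throughout the interior point set, so $|e(\xb_i)|\leq M_\tau w(\xb_i)\leq \frac{R^2}{2d\la}M_\tau$ uniformly. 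Absorbing $R^2/(2d\la)$ into the generic constant and substituting the two truncation bounds yields exactly the two claimed estimates.

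The subtlest point will be ensuring the geometric hypothesis of \Cref{lem:DMP} holds, since its proof must propagate equality across the nonzero-weight stencil adjacency graph until some node whose elliptical neighborhood exits $\Om$ is reached. This is a mild connectivity requirement on the point cloud and is satisfied whenever $\del$ is at least comparable to the fill distance $h$, which is precisely the regime targeted by the asymptotically compatible relaxation of \Cref{sec:nonlocalrelaxation}.
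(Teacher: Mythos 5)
Your proof is correct and follows essentially the same route as the paper: derive the discrete error equation $\cL_{\del,\Om}^h e(\xb_i)=\tau_i$, introduce a quadratic barrier whose image under the operator has a definite sign (the paper uses $\Phi(\xb)=|\xb-\xb^\ast|^2/(2d)$ with $\cL_{\del,\Om}^h\Phi\geq\la$, you use $w(\xb)=(R^2-|\xb|^2)/(2d\la)$ with $\cL_{\del,\Om}^hw\leq-1$, which differ only by centering, sign, and scaling), and then invoke the discrete maximum principle on the comparison functions. The cosmetic difference in barrier normalization does not change the substance of the argument.
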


\begin{remark}
\label{rmk:superconvergence}
Using the truncation error estimate in \Cref{rmk:truncationerror}, one can show that if $p= 2$ and $u\in C^{3,\alpha}(\overline{\Om})$, then 
\[
\max_{\xb_i\in\Om}|u(\xb_i) - u_\del^h(\xb_i)| \leq C ( |u|_{C^{3}(\overline{\Om})} \tau + |u|_{C^{3,\alpha}(\overline{\Om})}\del^{1+\alpha}).  
\]
where 
\[
\tau = \max_{\xb_i\in\Om} T_3(\xb_i)
\]
In practice, $\tau$ might be a very small number depending on the point cloud. Therefore, superconvergence may be observed. 
\end{remark}

\begin{proof}[Proof of {\Cref{thm:convergence}}.]
We only show the proof for the first case and the second case can be similarly shown. 
Denote $e_\del^h(\xb)=u(\xb) - u_\del^h(\xb)$ for $\xb\in \{ \xb_i\}_{i=1}^N\cup \partial\Om$ and $T_\del^h(\xb_i) = \cL_{\del,\Om}^h u(\xb_i) - L u (\xb_i)$
for $\xb_i\in\Om$. Notice that $\cL_{\del,\Om}^h e_\del^h (\xb_i) = T_\del^h(\xb_i)$ for  $\xb_i\in\Om$. By \Cref{lem:truncationerror_discrete}, we have
\[
K: = \max_{\xb_i\in\Om} |T_\del^h(\xb_i)| \leq   C |u|_{C^{2,\alpha}(\overline{\Om})} \del^{\alpha} .  
\]
Take $\xb^\ast\in \Om$ such that $\Om\subset B_{R}(\xb^\ast)$ for some $R>0$.
Then define $\Phi(\xb) = (\xb -\xb^\ast)^T(\xb -\xb^\ast)/(2d)$, we have 
\[
\cL_{\del,\Om}^h \Phi(\xb) = A(\xb): D^2 \Phi (\xb) =\frac{1}{d} \sum_{i=1}^d a_{ii}(\xb)  =\frac{1}{d}   \sum_{i=1}^d \eb_i^T A(\xb)  \eb_i \geq \la,
\]
where $\eb_i \in \R^d$ is the unit vector with the $i$-th component equal $1$.  Therefore we have 
\[
\cL_{\del,\Om}^h \left(\frac{K}{\la}  \Phi  + e_\del^h\right)(\xb_i) \geq0  \quad \forall \xb_i \in\Om,
\]
and by \Cref{lem:DMP}, we have 
\[
\begin{split}
\max_{\xb_i \in\Om} e_\del^h(\xb_i) &\leq \max_{\xb_i \in\Om} \left(\frac{K}{\la}  \Phi(\xb_i)  + e_\del^h(\xb_i)\right)  \leq \max_{\xb\in\partial\Om}\left(\frac{K}{\la}  \Phi(\xb)  + e_\del^h(\xb)\right) \\
& = \frac{K}{\la} \max_{\xb\in\partial\Om} \Phi(\xb) \leq   \frac{K}{\la} \frac{R^2}{2d}\leq \frac{C R^2}{2\la  d}  |u|_{C^{2,\alpha}(\overline{\Om})} \del^{\alpha} . 
\end{split}
\]
Similar estimates can be done for $-e_\del^h(\xb_i)$ and therefore the proof is complete. 
\end{proof}

\subsection{Neighborhood criteria}
\label{subsec:neighborhood_criteria}

In this subsection, we will discuss the neighborhood criteria that guarantee positive stencils. We only discuss the case $p=2$ in this subsection. The case $p=3$ is much harder to characterize which will be left for future work. 

First of all, there is a sufficient criterion for positive stencils for solving the Laplace equation, and it is presented as a cone condition in \cite{seibold2008minimal} for $d=2$ or $d=3$. 
For any $\xb_i \in \Om$ and unit vector $\vb \in \R^d$, we define an associated cone $\cC^{\vb}_\del(\xb_i)$ in $B_\del(\xb_i)$ by
\beq
\label{eq:cone}
\cC^{\vb}_\del(\xb_i) := \left\{\xb \in  B_\del(\xb_i) :  \xb^T \vb \geq \frac{1}{\sqrt{1+\sig_d}} |\xb|^2 \right\}
\eeq
where $\sig_d = \sqrt{2} -1$ (a cone with total opening angle $45^\circ$) for $d=2 $ and $\sig_d = \sqrt{(3-\sqrt{6})/6}$ (a cone with total opening angle $33.7^\circ$)
for $d=3$. With a rephrasing of words, we quote the result in \cite[Theorems 9 and 10]{seibold2008minimal} in the following lemma. 

\begin{lem}[Theorems 9 and 10 in \cite{seibold2008minimal}]
\label{lem:Seibold}
Take a point cloud $X = \{\xb_i\}_{i=1}^M \subset \Om_\del \subset \R^d$ and let $\xb_i \in \Om$ be fixed. 
If for any unit vector $\vb \in \R^d$,  $\cC^{\vb}_\del(\xb_i)\cap X\backslash \{ \xb_i\} \neq \emptyset$, then the feasible set to problem \eqref{eq:Seibold_equiv} 
with $p=2$ is not empty. 
\end{lem}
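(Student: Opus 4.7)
The plan is to reformulate feasibility as a moment-matching problem and then apply Farkas' lemma, with the cone hypothesis eliminating any possible dual certificate. First I would absorb the positive kernel factor $\tfrac{1}{\del^{d+2}}\ga(|\xb_j-\xb_i|/\del)$ into new nonnegative variables $\tilde\om_{j,i}$. Matching $\widetilde\cL_\del^h u(\xb_i)$ with $\widetilde\cL_\del u(\xb_i)$ on all $u\in\cP_2(\R^d)$ then reduces (via the same computations as in \Cref{lem:polynomialconsistency} applied to $\widetilde\cL_\del$) to the moment-matching system
\begin{equation}
\sum_{\xb_j\in X\cap B_\del(\xb_i)\setminus\{\xb_i\}} \tilde\om_{j,i}\,(\xb_j-\xb_i)=\bm 0, \qquad \sum_{\xb_j}\tilde\om_{j,i}\,(\xb_j-\xb_i)(\xb_j-\xb_i)^T=2I,
\end{equation}
with $\tilde\om_{j,i}\geq 0$; constants match automatically, and the right-hand side $2I$ uses the normalization \cref{eq:kernel_secondmoment}.

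Second, I would apply Farkas' lemma to this nonnegative linear system: it is solvable if and only if no pair $(\bb,Q)\in\R^d\times\R^{d\times d}_{\mathrm{sym}}$ with $\mathrm{tr}(Q)<0$ makes the test polynomial $q(\yb):=\bb\cdot\yb+\yb^T Q \yb$ nonnegative at every displacement $\yb_j:=\xb_j-\xb_i$. Arguing by contradiction, suppose such a $(\bb,Q)$ exists and let $\vb$ be a unit eigenvector of $Q$ whose eigenvalue $\mu$ is negative. The cone hypothesis furnishes cloud points with displacements $\yb_\pm\in\cC_\del^{\pm\vb}(\xb_i)$, so that $\pm\vb\cdot\yb_\pm\geq|\yb_\pm|^2/\sqrt{1+\sig_d}$. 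A positive combination of $q(\yb_+)\geq 0$ and $q(\yb_-)\geq 0$ chosen to cancel the $\bb\cdot\vb$ contribution leaves an expression dominated by $\mu$ times squared $\vb$-components, with residual perpendicular terms that the cone aperture should control.

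The main obstacle is exactly this geometric balancing in the last step. A single eigenvector $\vb$ may not suffice in 3d, since $\mathrm{tr}(Q)<0$ only forces the sum of three eigenvalues to be negative; one likely needs to test $q$ against displacements drawn from several cones pointing along the full eigenbasis of $Q$ and then balance the quadratic contributions to transfer negativity of $\mathrm{tr}(Q)$ into negativity of $q$ at one of the chosen points, contradicting the assumption. The dimension-specific aperture constants $\sig_d=\sqrt 2-1$ and $\sig_d=\sqrt{(3-\sqrt 6)/6}$ appearing in the cone definition are precisely the thresholds that make the trade-off between the negative diagonal and the perpendicular/linear slack tight; tracking these constants carefully is the technical heart of the argument.
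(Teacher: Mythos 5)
This lemma is quoted verbatim from Seibold's paper (Theorems~9 and~10 of \cite{seibold2008minimal}); the paper under review does not contain a proof, so there is no ``paper's proof'' to compare against. You must therefore be judged on your own argument, and on that basis your proof is not yet a proof: it is a correct setup followed by an acknowledged gap.

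The parts that do work: absorbing the strictly positive kernel factor into new variables $\tilde\omega_{j,i}\geq 0$ is legitimate, and the reduction of the reproducing conditions on $\mathcal P_2(\R^d)$ to the moment system
\[
\sum_j\tilde\omega_{j,i}\,\yb_j=\bm 0,\qquad \sum_j\tilde\omega_{j,i}\,\yb_j\yb_j^T=2I,\qquad \yb_j:=\xb_j-\xb_i,
\]
is correct. Your Farkas-lemma dual is also stated correctly: infeasibility would produce a symmetric $Q$ with $\operatorname{tr}(Q)<0$ and a vector $\bb$ such that the quadratic $q(\yb)=\bb\cdot\yb+\yb^TQ\yb$ is nonnegative on every displacement $\yb_j$.

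The gap you flag is a real one, and it is the entire content of the lemma. Taking a single eigenvector $\vb$ with negative eigenvalue $\mu<0$ and two points $\yb_\pm\in\cC_\del^{\pm\vb}(\xb_i)$ does not give a contradiction: the components of $\yb_\pm$ orthogonal to $\vb$ contribute through the \emph{other} eigenvalues of $Q$, which may be large and positive (only the trace is constrained), and with a cone half-angle as large as $22.5^\circ$ in 2d these orthogonal contributions are not small. Canceling $\bb\cdot\vb$ by a convex combination of $q(\yb_+)$ and $q(\yb_-)$ still leaves uncancelled linear terms in the orthogonal directions and an uncontrolled positive quadratic contribution. Your remark that ``one likely needs to test $q$ against displacements drawn from several cones pointing along the full eigenbasis'' is the right instinct, but this is where all the work is: you would need to select $2d$ cone directions aligned with the eigenbasis of $Q$, form a positive combination of the corresponding inequalities $q(\yb_k^\pm)\geq 0$ whose linear part vanishes identically, and then show the resulting quadratic form has nonpositive value while having trace-weighted coefficients that contradict $\operatorname{tr}(Q)<0$ --- and this is precisely where the aperture constants $\sigma_d$ appear and must be shown to be compatible. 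None of this is executed.

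Until that balancing argument is carried through with the specific constants $\sigma_2=\sqrt2-1$ and $\sigma_3=\sqrt{(3-\sqrt6)/6}$, you have a plausible proof strategy, not a proof. If you want to complete it, I would suggest first writing $q(\yb_k^\pm)$ in the eigenbasis of $Q$, decomposing $\yb_k^\pm=a_k^\pm\vb_k+\rb_k^\pm$ with $\rb_k^\pm\perp\vb_k$ and $|\rb_k^\pm|\leq\sigma_d^{1/2}|a_k^\pm|$ (this is what the cone condition gives after normalization), and then explicitly minimizing over the residuals to see what constraint on $\sigma_d$ is required for the combined inequality to force $\operatorname{tr}(Q)\geq 0$. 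Alternatively, consult Seibold's paper directly: his argument may be constructive (explicitly building positive weights from points supplied by the cone condition) rather than duality-based, which would sidestep the delicate cancellation you are attempting.
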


To discuss the neighborhood criteria for our problem, we first notice that for $\xb_i \in \Om$, one can define a one-to-one mapping between $B_\del(\xb_i)$ and $\cE_\del^{\xb_i}(\xb_i)$ by
\[
T_i \xb = \xb_i + M(\xb_i) (\xb -\xb_i) \quad \xb \in B_\del(\xb_i). 
\]
The inverse of $T_i$ is then given by 
\[
T_i^{-1} \xb = \xb_i + M(\xb_i)^{-1} (\xb -\xb_i) \quad \xb \in \cE_\del^{\xb_i}(\xb_i). 
\]
We also denote $T_i(D) = \{ \yb = T_i \xb: \xb\in D\}$ and $T_i^{-1} (D) = \{ \yb = T_i^{-1} \xb: \xb\in D\}$ for any set $D\subset\R^d$. 
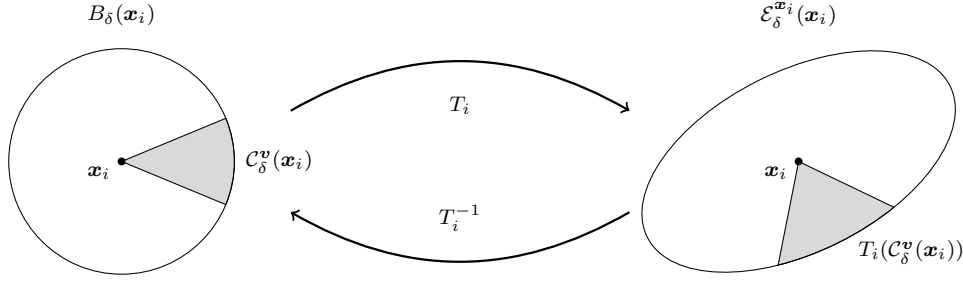
\begin{figure}[htp]
    \centering
    \begin{tikzpicture}[scale=1.5]
    \coordinate (c1) at (-3, 0);
    \coordinate (c2) at (3, 0);

    \draw (c1) circle (1);
    \fill (c1) circle (1pt);
    \node at ($(c1) + (0, 1.3)$) {$\scriptstyle B_{\delta}(\boldsymbol{x}_{i})$};
    \node at ($(c1) - (0.2, 0.1)$) {$\scriptstyle \boldsymbol{x}_{i}$};
    \node at ($(c1) + (1.4, 0)$) {$\scriptstyle \mathcal{C}_{\delta}^{\boldsymbol{v}}(\boldsymbol{x}_{i})$};
    \filldraw[fill opacity=0.3, fill=gray] (c1) -- +(-22.5:1) arc (-22.5:22.5:1) -- cycle;

    \draw[rotate around={-63.4322:(c2)}] (c2) ellipse (0.8018 and 1.5);
    \fill (c2) circle (1pt);
    \node at ($(c2) + (0, 1.3)$) {$\scriptstyle \mathcal{E}_{\delta}^{\boldsymbol{x}_{i}}(\boldsymbol{x}_{i})$};
    \node at ($(c2) - (0.2, 0.1)$) {$\scriptstyle \boldsymbol{x}_{i}$};
    \node at ($(c2) + (1.0, -0.8)$) {$\scriptstyle T_{i}(\mathcal{C}_{\delta}^{\boldsymbol{v}}(\boldsymbol{x}_{i}))$};
    \filldraw[rotate around={-63.4322:(c2)}, fill opacity=0.3, fill=gray] (c2) -- +(-22.5:0.8018 and 1.5) arc (-22.5:22.5:0.8018 and 1.5) -- cycle;

    \draw[->, line width=0.3mm] ($(c1) + (1.5, 0.45)$) to[out=30, in=150] ($(c2) + (-1.5, 0.45)$);
    \draw[->, line width=0.3mm] ($(c2) + (-1.5, -0.45)$) to[out=-150, in=-30] ($(c1) + (1.5, -0.45)$);
    \node at ($(c1)!0.5!(c2) + (0, 0.5)$) {$\scriptstyle T_{i}$};
    \node at ($(c1)!0.5!(c2) - (0, 0.5)$) {$\scriptstyle T_{i}^{-1}$};

    \end{tikzpicture}
    \caption{Illustration of $T_{i}$ and $T_{i}^{-1}$}
    \label{fig:ellipse_transform}
\end{figure}
\begin{lem}
\label{lem:equivalence}
Take a point cloud $X = \{\xb_i\}_{i=1}^M \subset \Om_\del \subset \R^d$ and let $\xb_i \in \Om$ be fixed.
For any $\xb_j \in \cE_\del^{\xb_i}(\xb_i)\cap X\backslash \{ \xb_i\}$, we write $\widetilde{\xb}_j  = T_i^{-1} \xb_j$. 
Let $\cL_\del^h u (\xb_i )$ be defined by \cref{eq:nonlocaldiscrete}  and 
\beq
\label{eq:nonlocaldiscrete_Laplace}
\widetilde{\cL}_\del^h u (\xb_i ) :=  \sum_{\widetilde{\xb}_j \in B_\del(\xb_i)} \frac{1}{\del^{d+2}} \ga\left(\frac{|\widetilde{\xb}_j -\xb_i|}{\del}\right)\widetilde{\om}_{j,i} (u(\widetilde{\xb}_j) - u(\xb_i)). 
\eeq
The following statements are equivalent.
\begin{enumerate}
\item There exists $\{ \widetilde{\om}_{j,i}\geq 0\}$ such that $\widetilde{\cL}_\del^h u (\xb_i) = \widetilde{\cL}_\del u(\xb_i)$ $\forall u \in\cP_2 (\R^d)$.  
\item There exists $\{ {\om}_{j,i}\geq 0\}$ such that ${\cL}_\del^h u (\xb_i) = {\cL}_\del u(\xb_i)$ $\forall u \in\cP_2 (\R^d)$.  
\end{enumerate}
\end{lem}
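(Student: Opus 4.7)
The plan is to use the affine change of variables $T_i \xb = \xb_i + M(\xb_i)(\xb - \xb_i)$ to transform the elliptic setting to the Laplace setting. The observation driving the proof is that $T_i$ is a bijection from $B_\del(\xb_i)$ onto $\cE_\del^{\xb_i}(\xb_i)$ which sends the point $\widetilde{\xb}_j$ to $\xb_j$, and under pullback it sends $\cP_2(\R^d)$ to itself: if $u \in \cP_2(\R^d)$ and we set $v := u\circ T_i$, then $v \in \cP_2(\R^d)$, and conversely every $v \in \cP_2(\R^d)$ arises this way from $u = v\circ T_i^{-1}$. This will be the key bijection.

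The core of the proof is then two change-of-variables identities, one at the continuous level and one at the discrete level. First I would verify the continuous identity
\[
\cL_\del u(\xb_i) \;=\; \widetilde{\cL}_\del v(\xb_i) \qquad \text{for all } u \in \cP_2(\R^d),
\]
by substituting $\yb = M(\xb_i)\wb$ in the integral defining $\cL_\del$: the Jacobian $\det(M(\xb_i))$ cancels with the $\det(M(\xb_i))^{-1}$ in $\rho_\del$, the argument of $\ga$ becomes $|\wb|/\del$, and the domain becomes $B_\del(\bm 0)$, producing exactly $\widetilde{\cL}_\del v(\xb_i)$. Next I would compute the discrete analogue: since $|M(\xb_i)^{-1}(\xb_j - \xb_i)| = |\widetilde{\xb}_j - \xb_i|$ and $u(\xb_j) - u(\xb_i) = v(\widetilde{\xb}_j) - v(\xb_i)$, a direct rewriting gives
\[
\cL_\del^h u(\xb_i) \;=\; \det(M(\xb_i))^{-1} \sum_{\widetilde{\xb}_j \in B_\del(\xb_i)} \frac{1}{\del^{d+2}} \ga\!\left(\frac{|\widetilde{\xb}_j - \xb_i|}{\del}\right) \om_{j,i}\,(v(\widetilde{\xb}_j) - v(\xb_i)).
\]
This expression equals $\widetilde{\cL}_\del^h v(\xb_i)$ precisely when the weights are related by $\widetilde{\om}_{j,i} = \det(M(\xb_i))^{-1} \om_{j,i}$.

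With these two identities in hand, the equivalence follows at once. Since $\det(M(\xb_i)) > 0$, the map $\om_{j,i} \mapsto \det(M(\xb_i))^{-1} \om_{j,i}$ is a bijection on nonnegative weights. Starting from statement (2), I pick any $u \in \cP_2(\R^d)$, set $v = u\circ T_i \in \cP_2(\R^d)$, and combine the two identities to get $\widetilde{\cL}_\del^h v(\xb_i) = \widetilde{\cL}_\del v(\xb_i)$; since $u$ (hence $v$) is arbitrary in $\cP_2$, statement (1) holds with $\widetilde{\om}_{j,i} = \det(M(\xb_i))^{-1} \om_{j,i}$. The reverse direction is identical, running $v \mapsto u = v\circ T_i^{-1}$ and reversing the weight relation.

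I do not expect any serious obstacle here: the whole lemma is essentially an invariance statement for an affine pullback, and the only points that require a little care are (a) keeping track of the $\det(M(\xb_i))^{-1}$ factor so that it lands on the correct side and cancels correctly, and (b) noting that $\cP_2(\R^d)$ is preserved under composition with the affine map $T_i$ (it would be preserved under any affine map, so this is automatic). No new estimates or positivity arguments are needed beyond the observation that $\det(M(\xb_i)) > 0$, which is immediate from the positive definiteness of $A(\xb_i)$.
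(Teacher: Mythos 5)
Your proof is correct and rests on the same affine change of variables $T_i$ and the same weight correspondence $\widetilde{\om}_{j,i} = \det(M(\xb_i))^{-1}\,\om_{j,i}$ as the paper's, but it packages the argument at a higher level. The paper's proof assumes (2), writes out the first- and second-moment conditions encoded in ``$\cL_\del^h u = \cL_\del u$ for all $u\in\cP_2$'' (the vanishing first moment and the identity $\sum \cdots (\widetilde{\xb}_j-\xb_i)\otimes(\widetilde{\xb}_j-\xb_i) = 2I$), and verifies by direct algebra that the rescaled weights satisfy the corresponding conditions for $\widetilde{\cL}_\del$. You instead establish two operator-level identities under the pullback $v = u\circ T_i$: the continuous one $\cL_\del u(\xb_i) = \widetilde{\cL}_\del v(\xb_i)$ via a change of variables in the integral, and the discrete one $\cL_\del^h u(\xb_i) = \widetilde{\cL}_\del^h v(\xb_i)$ by direct rewriting, and then use that $u\mapsto u\circ T_i$ is a bijection of $\cP_2(\R^d)$ onto itself. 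This is cleaner and more symmetric, and it has the advantage of working verbatim for any polynomial degree $p$ (the affine pullback preserves $\cP_p$ for every $p$), whereas the paper's verification is carried out only for the moments up to order~$2$. Both routes need the same positivity observation $\det(M(\xb_i))>0$ so that nonnegativity of the weights is preserved in both directions; you note this explicitly, which closes the argument.
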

\begin{proof}
By definition, we see that for any $\xb_j \in \cE_\del^{\xb_i}(\xb_i) \iff \widetilde{\xb}_j \in B_\del(\xb_i)$. 
We show that (1) and (2) are equivalent by letting $\widetilde{\om}_{j,i} =  \det(M(\xb_i))^{-1}   \om_{j,i} $. 
Assume that (2) is true, then since $\cL_\del u   = 0$ if $u\in \cP_1(\R^d)$, we have 
\[
\begin{split}
\bm{0} &=  \sum_{\xb_j \in \cE_\del^{\xb_i}(\xb_i)} \frac{1}{\del^{d+2}} \ga\left(\frac{|M(\xb_i)^{-1}(\xb_j -\xb_i)|}{\del}\right) \det(M(\xb_i))^{-1}   \om_{j,i}  M(\xb_i)^{-1}(\xb_j -\xb_i) \\
& =  \sum_{\widetilde{\xb}_j \in \cE_\del^{\xb_i}(\xb_i)} \frac{1}{\del^{d+2}} \ga\left(\frac{|\widetilde{\xb}_j - \xb_i|}{\del}\right) \det(M(\xb_i))^{-1}   \om_{j,i}  ( \widetilde{\xb}_j - \xb_i ).
\end{split}
\]
By letting $\widetilde{\om}_{j,i} =  \det(M(\xb_i))^{-1}   \om_{j,i} $, we see that $\widetilde{\cL}_\del^h u (\xb_i) = 0 =\widetilde{\cL}_\del u(\xb_i)$ $\forall u \in\cP_1 (\R^d)$. 
Next by using $  {\cL}_\del^h u (\xb_i) = {\cL}_\del u(\xb_i) = A(\xb_i):D^2 u(\xb_i)$ for $u \in\cP_2 (\R^d)$, we have 
\[
\begin{split}
 &2 = M(\xb_i)^{-1} (2A(\xb_i)) M(\xb_i)^{-1} \\
=& M(\xb_i)^{-1}  \sum_{\xb_j \in \cE_\del^{\xb_i}(\xb_i)} \frac{1}{\del^{d+2}} \ga\left(\frac{|M(\xb_i)^{-1}(\xb_j -\xb_i)|}{\del}\right) \cdot \\
&\qquad \qquad \qquad \qquad \qquad \det(M(\xb_i))^{-1}   \om_{j,i}  \left( (\xb_j -\xb_i)\otimes (\xb_j -\xb_i) \right) M(\xb_i)^{-1} \\
=&  \sum_{\widetilde{\xb}_j \in \cE_\del^{\xb_i}(\xb_i)} \frac{1}{\del^{d+2}} \ga\left(\frac{|\widetilde{\xb}_j - \xb_i|}{\del}\right)   \widetilde{\om}_{j,i}  ( \widetilde{\xb}_j - \xb_i ) \otimes  ( \widetilde{\xb}_j - \xb_i ) .
\end{split}
\]
This implies that $\widetilde{\cL}_\del^h u (\xb_i) =\widetilde{\cL}_\del u(\xb_i) =\Delta u(\xb_i)$ when $u\in \cP_2(\R^d)$.  Therefore (2) implies (1). Similarly, we can show (1) also implies (2).
\end{proof}

The following result is an implication of  \Cref{lem:Seibold} and \Cref{lem:equivalence}. 
\begin{coro}
\label{coro:cone_condition}
Take a point cloud $X = \{\xb_i\}_{i=1}^M \subset \Om_\del \subset \R^d$ and let $\xb_i \in \Om$ be fixed. 
If for any unit vector $\vb \in \R^d$,  $T_i(\cC^{\vb}_\del(\xb_i))\cap X\backslash \{ \xb_i\} \neq \emptyset$, then $S_{\del, h, 2}(\xb_i)$ and $\overline{S}_{\del, h, 2}(\xb_i)$ are not empty. 
\end{coro}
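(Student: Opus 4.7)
The plan is to pull the hypothesis back through the affine bijection $T_i$ so that Seibold's cone criterion for the Laplacian (\Cref{lem:Seibold}) applies in the transformed coordinates, and then to push the resulting nonnegative weights forward via the equivalence \Cref{lem:equivalence}. The only additional work, needed for the extended feasibility set $\overline{S}_{\del,h,2}(\xb_i)$, is to verify that the cone condition survives the replacement of each exterior point by its boundary projection.

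First I would dispatch $S_{\del,h,2}(\xb_i)$. Since $T_i^{-1}$ maps $\cE_\del^{\xb_i}(\xb_i)$ bijectively onto $B_\del(\xb_i)$ and fixes $\xb_i$, the hypothesis $T_i(\cC^{\vb}_\del(\xb_i))\cap (X\setminus\{\xb_i\})\neq\emptyset$ is equivalent, for every unit vector $\vb$, to $\cC^{\vb}_\del(\xb_i)\cap(\widetilde{X}\setminus\{\xb_i\})\neq\emptyset$, where $\widetilde{X} := T_i^{-1}(X)$. Applying \Cref{lem:Seibold} to $\widetilde{X}$ produces nonnegative weights $\{\widetilde{\om}_{j,i}\}$ with $\widetilde{\cL}_\del^h u(\xb_i)=\widetilde{\cL}_\del u(\xb_i)$ for all $u\in\cP_2(\R^d)$, and the implication $(1)\Rightarrow(2)$ of \Cref{lem:equivalence} then delivers matching nonnegative weights $\{\om_{j,i}\}$ for $\cL_\del^h$ at $\xb_i$, so $S_{\del,h,2}(\xb_i)\neq\emptyset$.

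For $\overline{S}_{\del,h,2}(\xb_i)$, each exterior $\xb_j\notin\overline{\Om}$ is replaced by $\overline{\xb_j} = \xb_i + t_j(\xb_j-\xb_i)$ with some $t_j \in (0,1]$ (since $\overline{\xb_j}$ lies on the segment $[\xb_i,\xb_j]\subset\overline\Om$), and hence in the transformed space $T_i^{-1}(\overline{\xb_j}) = \xb_i + t_j(\widetilde{\xb}_j - \xb_i)$. The crucial observation is that the cone $\cC^{\vb}_\del(\xb_i)$ is preserved under such inward scaling from $\xb_i$: its defining inequality is linear in $\xb-\xb_i$ on the left and quadratic on the right, so replacing $\xb-\xb_i$ by $t(\xb-\xb_i)$ with $t\in(0,1]$ only weakens the constraint (and keeps the point inside $B_\del(\xb_i)$). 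Thus the cone condition transfers to the projected, pulled-back cloud $\{T_i^{-1}(\overline{\xb_j})\}$, and rerunning the \Cref{lem:Seibold}/\Cref{lem:equivalence} chain — with $\overline{\xb_j}$ in place of $\xb_j$ throughout, which the proof of \Cref{lem:equivalence} accommodates verbatim because it only uses that the points lie in $\cE_\del^{\xb_i}(\xb_i)$ — yields the required nonnegative weights for $\cL_{\del,\Om}^h$.

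The main obstacle is precisely this cone-invariance under boundary projection; once that small geometric fact is recorded, the corollary reduces to a clean composition of the two preceding lemmas via the linear change of variables $T_i$.
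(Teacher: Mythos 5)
Your argument matches the paper's proof essentially step for step: both first combine \Cref{lem:Seibold} with \Cref{lem:equivalence} (through the bijection $T_i$) to obtain $S_{\del,h,2}(\xb_i)\neq\emptyset$, and both then handle $\overline{S}_{\del,h,2}(\xb_i)$ by replacing exterior points with their projections and observing that since $\overline{\xb_j}$ lies on the segment from $\xb_i$ to $\xb_j$, the transformed cone (being star-shaped with respect to $\xb_i$) still contains it. You spell out the inward-scaling invariance of the cone a bit more explicitly than the paper, which simply says the projected point ``lies on the line connecting $\xb_i$ and $\xb_j$,'' but the content is identical.
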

\begin{proof}
First of all, from  \Cref{lem:Seibold} and \Cref{lem:equivalence}, it is easy to see that $T_i(\cC^{\vb}_\del(\xb_i))\cap X\backslash \{ \xb_i\} \neq \emptyset$ for all unit vector $\vb\in\R^d$ implies 
$S_{\del, h, 2}(\xb_i)$ (as defined in \cref{eq:feasibleset}) is not empty. 
Now if we define a new point cloud $\overline{X}$ by replacing all  $\xb_j \in   X\cap\cE_\del^{\xb_i} (\xb_i)\backslash\{\xb_i\}$ with $\overline{\xb_j}$  in $X$. Then since $\overline{\xb_j}$ lies on the line connecting $\xb_i$ and $\xb_j$, we see that 
\[
T_i(\cC^{\vb}_\del(\xb_i))\cap X\backslash \{ \xb_i\} \neq \emptyset \implies T_i(\cC^{\vb}_\del(\xb_i))\cap \overline{X}\backslash \{ \xb_i\} \neq \emptyset ,
\]
and the latter implies  $\overline{S}_{\del, h, 2}(\xb_i)$ is not empty by the same reasoning.
See an illustration of the sets $T_i(\cC^{\vb}_\del(\xb_i))\cap X\backslash \{ \xb_i\} $ and $T_i(\cC^{\vb}_\del(\xb_i))\cap \overline{X}\backslash \{ \xb_i\}$ in \Cref{fig:cone_map}. 
\end{proof}
\begin{figure}[htp]
    \centering
    \begin{tikzpicture}[scale=1.3]
    \coordinate (x0) at (0, 0);

    \coordinate (A) at (0.6, 2.5);
    \coordinate (B) at (2.0, -1.8);
    \draw[line width=0.4mm, name path=A--B] (A) to[out=-40, in=80] (B);

    \draw[rotate around={-63.4322:(x0)}, dashed] ($(x0) + (0:1.6036 and 3)$) arc (0:180:1.6036 and 3);
    \fill (x0) circle (1pt);
    \node at ($(x0) + (0.3, 1.3)$) {$\scriptstyle \mathcal{E}_{\delta}^{\boldsymbol{x}_{i}}(\boldsymbol{x}_{i})$};
    \node at ($(x0) - (0.2, 0.1)$) {$\scriptstyle \boldsymbol{x}_{i}$};
    \node at ($(x0) + (3.5, 0.5)$) {$\scriptstyle T_{i}(\mathcal{C}_{\delta}^{\boldsymbol{v}}(\boldsymbol{x}_{i}))$};
    \filldraw[rotate around={-63.4322:(x0)}, fill opacity=0.3, fill=gray] (x0) -- +(40:1.6036 and 3) arc (40:85:1.6036 and 3) -- cycle;

    \coordinate (x1) at (0.7, -0.03);
    \coordinate (x2) at (1.2, 0.45);
    \coordinate (x3) at (2.35, 0.15);
    \coordinate (x4) at (2.6, 0.7);

    \fill (x1) circle (1pt);
    \fill (x2) circle (1pt);
    \fill (x3) circle (1pt);
    \fill (x4) circle (1pt);

    \node[below] at (x1) {$\scriptstyle \boldsymbol{x}_{j_{1}}$};
    \node[above left] at (x2) {$\scriptstyle \boldsymbol{x}_{j_{2}}$};
    \node[below right] at (x3) {$\scriptstyle \boldsymbol{x}_{j_{3}}$};
    \node[above] at (x4) {$\scriptstyle \boldsymbol{x}_{j_{4}}$};

    \draw[blue, dashed, line width=0.2mm, name path=0--3] (x0) -- (x3);
    \draw[blue, dashed, line width=0.2mm, name path=0--4] (x0) -- (x4);

    \path[name intersections={of=A--B and 0--3, by=E}];
    \fill[orange] (E) circle (1pt);
    \node[below left, red] at (E) {$\scriptstyle \overline{\boldsymbol{x}_{j_{3}}}$};

    \path[name intersections={of=A--B and 0--4, by=F}];
    \fill[orange] (F) circle (1pt);
    \node[above left, red] at (F) {$\scriptstyle \overline{\boldsymbol{x}_{j_{4}}}$};

    \node[below] at (B) {$\scriptstyle \partial \Omega$};
    \node[below] at ($(x0) + (-0.7, 2.2)$) {$\scriptstyle \Omega$};

    \end{tikzpicture}
    \caption{Illustration of \Cref{coro:cone_condition}.}
    \label{fig:cone_map}
\end{figure}

Although \Cref{coro:cone_condition} is a complete characterization of a sufficient condition for the well-posedness of \cref{eq:nonlocalminimization_Om}.
It is hard to use in practice. In the following, we proceed to give a sufficient condition that is easy to use in the case $d=2$. 
We leave the proof of the following theorem in \Cref{sec:appendixA}. 
\begin{thm}
\label{thm:nbh_criterion}
Let $d=2$ or $d=3$ and $h$ be the fill distance defined in \cref{eq:filldistance}. Let $\lambda_1 = \lambda_1(\xb_i)$ denote the smallest eigenvalue of $A(\xb_i)$.
Then there exists a constant $c=c(d)>0$ depending only on $d$ such that if $$\delta \geq c h (\lambda_1)^{-1/2}, $$ then $S_{\del, h, 2}(\xb_i)$ and $\overline{S}_{\del, h, 2}(\xb_i)$ are not empty.
Since $\lambda_1\leq \varrho$,
this implies the existence of positive stencils given $\delta \geq c h (\varrho)^{-1/2}$. 
\end{thm}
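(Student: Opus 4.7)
The plan is to reduce the statement to the cone condition supplied by \Cref{coro:cone_condition}: it suffices to show that for every unit vector $\vb \in \R^d$ the image set $T_i(\cC^{\vb}_\del(\xb_i))$ contains a node of $X \setminus \{\xb_i\}$. The overall idea is to inscribe inside $T_i(\cC^{\vb}_\del(\xb_i))$ a Euclidean ball of radius at least $h$ whose center lies in $\Om_\del$ and which does not contain $\xb_i$; the definition of the fill distance then automatically produces the required node.

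The first step is a purely geometric inscribed-sphere computation inside the cone $\cC^{\vb}_\del(\xb_i)$. Because the opening is fixed by the dimensional constant $\sig_d$ while the radial extent is controlled by $\del$, a routine calculation in a two-dimensional slice through $\vb$ (extended by rotational symmetry in 3d) produces constants $r_d,\eta_d \in (0,1)$ depending only on $d \in \{2,3\}$ with $\eta_d > r_d$ and a point $\yb_0$ with $|\yb_0| = \eta_d\del$ such that
\[
B(\xb_i + \yb_0,\, r_d \del) \subset \cC^{\vb}_\del(\xb_i).
\]
Pushing this ball through $T_i \xb = \xb_i + M(\xb_i)(\xb - \xb_i)$ and using that the smallest singular value of $M(\xb_i) = A(\xb_i)^{1/2}$ equals $\sqrt{\la_1}$, the image ellipsoid has smallest semi-axis $r_d\del\sqrt{\la_1}$ and therefore contains the Euclidean ball
\[
B\bigl(\xb_i + M(\xb_i)\yb_0,\ r_d\del\sqrt{\la_1}\bigr) \subset T_i(\cC^{\vb}_\del(\xb_i)).
\]
The inequality $|M(\xb_i)\yb_0| \geq \sqrt{\la_1}\,|\yb_0| = \eta_d\del\sqrt{\la_1}$ together with $\eta_d > r_d$ shows that $\xb_i$ lies outside this image ball, while $|M(\xb_i)\yb_0| \leq |\yb_0| < \del$ (using $\La = 1$) places the center in $B_\del(\xb_i) \subset \Om_\del$, the last inclusion holding because $\xb_i \in \Om$ and any point of $B_\del(\xb_i)$ is either in $\Om$ or within distance $\del$ of $\partial\Om$.

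The definition \cref{eq:filldistance} of the fill distance then guarantees that a Euclidean ball of radius at least $h$ with center in $\Om_\del$ contains some $\xb_j \in X$. Requiring $r_d\del\sqrt{\la_1} \geq h$, i.e.\ $\del \geq h/(r_d\sqrt{\la_1})$, provides such an $\xb_j$, and the exclusion of $\xb_i$ from the ball forces $\xb_j \neq \xb_i$; setting $c(d) = 1/r_d$ verifies the hypothesis of \Cref{coro:cone_condition} and thereby proves the nonemptiness of both $S_{\del,h,2}(\xb_i)$ and $\overline{S}_{\del,h,2}(\xb_i)$. The final sentence of the theorem follows from $\la_1 \leq \varrho$. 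The step I expect to cost the most effort is the first geometric calculation in three dimensions, where the explicit value of $\sig_3$ must be shown to produce an inscribed ball whose distance from $\xb_i$ strictly exceeds its own radius; this slack is what keeps the image ball away from $\xb_i$ even under the potentially severe anisotropic stretch by $M(\xb_i)$.
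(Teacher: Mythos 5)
Your proof is correct, but it takes a genuinely different and in fact cleaner route than the paper's Appendix~A argument. Both proofs reduce the claim to the cone condition of \Cref{coro:cone_condition} and then invoke the fill-distance definition to extract a node from a suitable ball. The difference is where the ball is constructed. The paper first proves \Cref{lem:appendix_1} (that the inscribed ball of $T_i(\cC^{\vb}_\del(\xb_i))$ itself must exceed radius $h$) and then spends \Cref{lem:min_radius} on an intricate direct computation of the inscribed circle of the \emph{distorted} region $T_i(\cC^{\vb}_\del(\xb_i))$ --- an elliptic sector --- by fitting a circular sector with explicit opening $\Gamma(\theta)$ and radius $R(\theta)$ inside it and then minimizing $\tfrac12\sin(\Gamma(\theta)/2)R(\theta)$ over $\theta$, with a separate case analysis and a nontrivial compactness argument to show the minimum is positive; the 3d extension is then done by slicing through planes containing $\vb$ and ends with an admittedly informal step. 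Your argument sidesteps all of this: you inscribe a ball $B(\xb_i+\yb_0,r_d\del)$ in the \emph{undistorted} cone $\cC^{\vb}_\del(\xb_i)$ (an elementary computation in any dimension by symmetry), push it through the linear map $T_i$, note that the image is an exact ellipsoid whose inscribed ball has radius equal to the smallest semi-axis $r_d\del\sqrt{\la_1}$, and then check that the exclusion of $\xb_i$ and the containment of the center in $\Om_\del$ follow from $\eta_d>r_d$ and $\Lambda=1$. This gives the same $\cO(\del\sqrt{\la_1})$ lower bound with an explicit constant $r_d=\sin\phi/(1+\sin\phi)$, avoids the paper's hand-waving in 3d, and works identically in 2d and 3d --- arguably also yielding a slightly better constant than the paper's $K(\phi)$. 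One point to nail down when writing this up: the fill-distance definition \eqref{eq:filldistance} yields a node in a ball of radius $h$ only in the closed sense, so state carefully whether you are using open or closed balls (and hence whether the hypothesis should be $\delta \ge ch\la_1^{-1/2}$ or strict), and verify that the center $\xb_i+M(\xb_i)\yb_0$ lies in $\Om_\del$ by arguing as the paper does that any point within $\del$ of $\xb_i\in\Om$ is either in $\Om$ or within $\del$ of $\partial\Om$.
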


\begin{remark}
\label{rem:nbh_est}
Notice that for a given point $\xb$, the elliptical searching region surrounding $\xb$ has semi-axes $\{\del \sqrt{\lambda_i(\xb)} \}_{i=1}^d$ where $\lambda_i(\xb)$ denotes the $i$-th smallest eigenvalue of $A(\xb_i)$. 
The estimate in \Cref{thm:nbh_criterion} is near-optimal in the sense that the semi-minor axis $\del\sqrt{\lambda_1(\xb)}$ of the searching neighborhood can be made proportional to $h$ asymptotically for the existence of positive stencils.
By \Cref{thm:nbh_criterion},  we can choose an elliptical neighborhood of $\xb$ whose volume is proportional to $h^d (\prod_2^{d}\varrho_i)^{-1/2}$,
where $\varrho_i=\varrho_i(\xb):= \lambda_1(\xb)/\lambda_i(\xb)\geq \varrho$. This implies that the number of points within the searching neighborhood of $\xb$ is proportional to $\varrho_2^{-1/2}\leq \varrho^{-1/2}$ in 2d and $(\varrho_2\varrho_3)^{-1/2}\leq \varrho^{-1}$ in 3d. 
However, we do not have an explicit estimate of the constant $c=c(d)$ in \Cref{thm:nbh_criterion}. 
In practice, we estimate this constant numerically which is described in detail in \Cref{subsec:searching}. 
\end{remark}

Combining \Cref{thm:nbh_criterion} with \Cref{thm:convergence}, one can take $\del =c h\varrho^{-1/2} $ and then the convergence rate is given in terms of $h$. 
This is summarized in the following Corollary.   
\begin{coro}
Let $\del =c h\varrho^{-1/2} $ where $c=c(d)$ is the constant in \Cref{thm:nbh_criterion}, then with a generic constant $C>0$,
\beq
\max_{\xb_i\in\Om}|u(\xb_i) - u_\del^h(\xb_i)| \leq C |u|_{C^{2+k,\alpha}(\overline{\Om})} \varrho^{-(k+\alpha)/2} h^{k+\alpha}. 
\eeq 
for $k=0$ or $k=1$ and $\alpha\in (0,1]$. 
\end{coro}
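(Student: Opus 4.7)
The plan is to apply \Cref{coro:cone_condition}, which reduces the claim to showing that for every unit vector $\vb \in \R^d$, the transformed cone $T_i(\cC^{\vb}_\del(\xb_i))$ intersects $X \setminus \{\xb_i\}$. To exploit the fill distance, I would use the elementary observation that, by the definition of $h$ in \cref{eq:filldistance}, any closed ball of radius $h$ lying in $\Om_\del$ contains at least one point of the point cloud $X$. Therefore the problem reduces to the purely geometric statement: under the hypothesis $\del \geq c h (\lambda_1)^{-1/2}$, the region $T_i(\cC^{\vb}_\del(\xb_i))$ contains a ball of radius $h$ that is bounded away from $\xb_i$.

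The next step is to estimate the radius of the largest ball inscribed in $T_i(\cC^{\vb}_\del(\xb_i))$. Recall that $T_i$ is the affine map $\xb\mapsto \xb_i + M(\xb_i)(\xb-\xb_i)$, whose linear part $M(\xb_i) = A(\xb_i)^{1/2}$ is symmetric positive definite with singular values $\sqrt{\lambda_k(\xb_i)}$, $k=1,\dots,d$, and smallest singular value $\sqrt{\lambda_1}$. The cone $\cC^{\vb}_\del(\xb_i)$ has a fixed aperture determined by the dimensional constant $\sig_d$ and longitudinal extent $\del$. Its image under $T_i$ is a deformed cone-like set whose geometry I would control as follows: identify an explicit ball $B_{r\del}(\xb_i + t\del\vb) \subset \cC^{\vb}_\del(\xb_i)$ (with $t=t(d), r=r(d)$ chosen so that this ball lies strictly inside the opening condition of the cone), and then image it under $M(\xb_i)$. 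The image is an ellipsoid whose smallest semi-axis is bounded below by $r\del\sqrt{\lambda_1(\xb_i)}$, and it lies inside $T_i(\cC^{\vb}_\del(\xb_i))$ by linearity. Inscribing a round ball of radius $r\del\sqrt{\lambda_1}$ inside this ellipsoid yields a ball inside $T_i(\cC^{\vb}_\del(\xb_i))$.

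Choosing the constant $c=c(d)$ so that $c\cdot r(d) \geq 1$ (accounting for the standoff from $\xb_i$ so the ball is disjoint from $\{\xb_i\}$), the hypothesis $\del \geq c h(\lambda_1)^{-1/2}$ forces $r\del\sqrt{\lambda_1} \geq h$. Combined with the fill-distance observation, this yields a point of $X\setminus \{\xb_i\}$ inside $T_i(\cC^{\vb}_\del(\xb_i))$ for every $\vb$, as required. The claim for $\overline{S}_{\del,h,2}(\xb_i)$ then follows from the same reasoning by the projection argument used in the proof of \Cref{coro:cone_condition}, since $\overline{\xb_j}$ lies on the segment from $\xb_i$ to $\xb_j$ and the cone region is star-shaped with respect to $\xb_i$.

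The main obstacle is the geometric estimate in the second paragraph: identifying the largest inscribed ball inside $T_i(\cC^{\vb}_\del(\xb_i))$ in a dimension-uniform way. In $d=2$ the worst-case $\vb$ (aligned with the eigenvector of $\sqrt{\lambda_1}$) makes the analysis essentially two-dimensional and tractable by inscribing a disk tangent to both edges of the transformed sector. The $d=3$ case is genuinely harder since the intermediate eigenvalue $\sqrt{\lambda_2}$ enters, and the image of the cone is a fully three-dimensional ellipsoidal sector; nevertheless, the worst case still corresponds to $\vb$ aligned with the smallest eigendirection of $M(\xb_i)$, so the bound $r\del\sqrt{\lambda_1}$ remains the binding constraint. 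The dependence of $c(d)$ on dimension is therefore made explicit only up to the dimensional constants $r(d)$ and $t(d)$ determined by the cone aperture $\sig_d$, and obtaining an explicit sharp value of $c(d)$ requires the numerical estimation described in \Cref{subsec:searching}.
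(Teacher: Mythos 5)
You have argued the wrong statement. The corollary asserts a convergence rate for the numerical solution, $\max_{\xb_i\in\Om}|u(\xb_i)-u_\del^h(\xb_i)|\leq C|u|_{C^{2+k,\alpha}(\overline\Om)}\varrho^{-(k+\alpha)/2}h^{k+\alpha}$, yet your argument never mentions $u-u_\del^h$, the truncation error, or the discrete maximum principle. What you sketch instead is that $\del\geq ch\lambda_1^{-1/2}$ forces the cone image $T_i(\cC^{\vb}_\del(\xb_i))$ to contain a ball of radius $h$ disjoint from $\xb_i$, hence the feasible sets $S_{\del,h,2}(\xb_i)$ and $\overline{S}_{\del,h,2}(\xb_i)$ are non-empty. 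That is precisely the content of \Cref{thm:nbh_criterion}, which the corollary already presupposes (note that $c$ is, by hypothesis, ``the constant in \Cref{thm:nbh_criterion}''); it is not the corollary itself.

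The intended proof is a two-line substitution. Since $\varrho\leq\lambda_1(\xb_i)$ for every $\xb_i\in\Om$, the choice $\del=ch\varrho^{-1/2}$ satisfies $\del\geq ch\lambda_1(\xb_i)^{-1/2}$ at every node, so by \Cref{thm:nbh_criterion} the relevant feasible sets are non-empty and \Cref{thm:convergence} is applicable, giving $\max_{\xb_i\in\Om}|u(\xb_i)-u_\del^h(\xb_i)|\leq C|u|_{C^{2+k,\alpha}(\overline\Om)}\del^{k+\alpha}$ for $k\in\{0,1\}$, $\alpha\in(0,1]$. Substituting $\del=ch\varrho^{-1/2}$ and absorbing $c^{k+\alpha}$ into the generic constant yields the claimed bound. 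For what it is worth, your sketch of the inscribed-ball estimate (inscribe a Euclidean ball in the untransformed cone, push it forward by $M(\xb_i)$, and observe that the image ellipsoid contains a ball of radius proportional to $\del\sqrt{\lambda_1}$ because the smallest singular value of $M(\xb_i)$ is $\sqrt{\lambda_1}$) is a tidy alternative to the more explicit sectional computation in \Cref{sec:appendixA}; but it answers \Cref{thm:nbh_criterion}, not the corollary at hand.
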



\section{Algorithm Design \& Complexity Analysis}
\label{sec:algorithm}
In this section, we explain our algorithms in detail, mainly focusing on point cloud generation and matrix assembly.

\subsection{Point cloud generation}
In order to perform numerical experiments, some criteria need to be given on the point cloud geometry. 
For this, we first need to define two geometric quantities with respect to point clouds in addition to the fill distance defined in \cref{eq:filldistance}. 
For a point cloud $X =\{ \xb_i\}_{i=1}^M \subset \Om_\del$, we define the separation distance $\zeta$ as
\begin{equation}
\label{eq:separation}
    \zeta
    := \frac{1}{2} \min_{1 \leq i < j \leq M} |\boldsymbol{x}_{i} - \boldsymbol{x}_{j}|. 
\end{equation}
In addition, for the points $\{ \xb_i\}_{i=1}^N$ inside $\Om$, we denote $\kappa$ the minimum distance to the boundary, i.e., 
\begin{equation}
\label{eq:mindistbdry}
    \kappa
    := \min_{1 \leq i \leq N} \text{dist}(\boldsymbol{x}_{i}, \partial \Omega).
\end{equation}
With these geometric quantities, we now define proper point clouds to be used in numerical experiments.  
\begin{defn}
\label{def:pointcloud}
Let $X =\{ \xb_i\}_{i=1}^M \subset \Om_\del$ be a point cloud with its geometric quantities $h$, $\zeta$ and $\kappa$ defined in \cref{eq:filldistance,eq:separation,eq:mindistbdry}, respectively. 
Given a set of positive constants $\{ c_h, c_\zeta, c_\kappa\}$, we say $X$ is a proper point cloud (with respect to the constants $\{ c_h, c_\zeta, c_\kappa\}$) if it satisfies the following conditions:
\begin{enumerate}[(i)]
    \item $h \leq c_h \left ( \frac{|\Om_\del|}{|X|}\right )^{1 / d}$;
    \item $\zeta \geq c_\zeta h$;
    \item $\kappa \geq c_\kappa h$.
\end{enumerate}
Notice that $|\Om_\del|$ denotes the $d$-dimensional Lebesgue measure of $\Om_\del$ and $|X| = M$. 
\end{defn}
\begin{remark}
Condition (ii) in \Cref{def:pointcloud} essentially requires the point cloud to be quasi-uniform (\cite{wendland2004scattered}), and condition (iii) 
requires a certain distance from the interior points to the boundary set so that interior points would not be too close to the boundary points after the mapping described in \Cref{subsec:boundarytreatment} (this allows numerically solving \cref{eq:nonlocalminimization_Om}).
Notice that if (ii) is satisfied, then there exists $C=C(c_\zeta, d)>0$ such that $h\leq C\left ( \frac{|\Om_\del|}{|X|}\right )^{1/d}$.
In practice, we impose condition (i) with a chosen constant $c_h>0$ to have explicit control over the fill distance.  
In our numerical experiments in \Cref{sec:num}, we take $c_h = 1$, $c_\zeta = 0.175$, and $c_\kappa= 0.25$.
\end{remark}

Discussion on the generation of proper point clouds is found in \Cref{sec:appendixB}.

\subsection{Matrix assembly}
The major effort in matrix assembly is the generation of the weights  $\{ \om_{j,i}\}$ defined in \cref{eq:nonlocalminimization_Om}. 
Notice that with respect to each point cloud and coefficient matrix $A(\xb)$, we need to solve $N$ number of linear minimization problems to get the weights where $N$ denotes the number of interior points. 
For each $\xb_i\in \Om$,  we first need to find all the points inside the searching area, i.e., the domain $\cE_\del^{\xb_i}(\xb_i)$ for a given a $\del>0$, then solve the linear minimization problem to get the stencil.  

We now describe the process of finding points inside an elliptical searching area. 
Notice that the proper point cloud is quasi-uniform, an easy way to accelerate this procedure is dividing the domain into same-size axis-aligned blocks \cite{seibold2008minimal,wendland2004scattered}. 
We call these blocks voxels. Alternatively, point clouds can also be managed by k-d trees \cite{bentley1975multidimensional,wendland2004scattered}. To search neighbors in the given elliptical area $\cE_\del^{\xb_i}(\xb_i)$, we first compute which voxel contains the current point $\xb_i$, then search all the neighboring voxels that intersect non-trivially with $\cE_\del^{\xb_i}(\xb_i)$. If a voxel that intersects non-trivially with the searching area is contained in that area, then we add all the points in the voxel to the result set; 
otherwise, points in the voxel need to be checked one by one.
The intersection algorithm of voxels with ellipses/ellipsoids is crucial and we now describe it below. 

Let $\cH$ denote a $d$-dimensional (hyper)rectangle and $\cE$ a $d$-dimensional ellipsoid. $\cH$ and $\cE$ are both open sets.  
We present an intersection detection algorithm in Algorithm \ref{alg:intersection_rectangle_ellipsoid} that distinguishes the following cases:\\
\indent {\it Case 1.} $\cH$ does not intersect with $\cE$;  \\
\indent {\it Case 2.} $\cH$ is contained in $\cE$; \\
\indent {\it Case 3.} $\cH$ intersects with $\cE$ but is not contained in $\cE$. \\

\begin{algorithm}[htbp]
function intersection($\cH$, $\cE$)\\
\uIf{the center of $\cH$ is inside $\cE$ }
{
    \uIf{all the vertices of $\cH$ are inside $\overline{\cE}$ }
    {
        \textbf{return} Case 2; 
    }
    \Else
    {
        \textbf{return} Case 3; 
    }
}
\uElseIf{the center of $\cE$ is inside $\cH$ }
{
    \textbf{return} Case 3; 
}
\Else
{
    \uIf{any one of the faces of $\cH$ intersects with $\cE$ }
    {
        \textbf{return} Case 3; 
    }
    \Else
    {
        \textbf{return} Case 1; 
    }
}
\caption{Intersection detection of hyperrectangles with ellipsoids.}
\label{alg:intersection_rectangle_ellipsoid}
\end{algorithm}

Notice that in Algorithm \ref{alg:intersection_rectangle_ellipsoid}, the most time-consuming part is the intersection detection of faces of $\cH$ with $\cE$. 
In 2d, the faces of a rectangle are line segments. Intersection detection of a line segment with an ellipse is relatively easy to carry out. 
One can first find the intersection (if exists) of the underlying line with the ellipse, which is a line segment (see \cref{fig:ellipse_intersection} as an illustration), by solving a quadratic equation. 
Then the intersection of two line segments can be easily checked.  
In 3d, to check whether a face of a 3d rectangle intersects with an ellipsoid, we first find the intersection area (if exists) of the underlying plane with the ellipse. Then since the intersection area (see \cref{fig:ellipsoid_intersection} as an illustration) is an ellipse, the problem is then reduced to the intersection detection of two-dimensional rectangles with ellipses. 
This can be further extended to higher dimensions, and a $d$-dimensional intersection problem can be reduced to a $d - 1$-dimensional problem by this reasoning. 
Let $Q_{I}(d)$ denote the complexity of the intersection algorithm in $d$ dimensions. Notice that a $d$-dimensional hyperrectangle has $2d$ faces, we can then deduce the recurrence relation
\begin{equation*}
Q_{I}(d) \leq c d  Q_{I}(d - 1) 
\qquad \text{with} \qquad
Q_{I}(1) = \mathcal{O}(1).
\end{equation*}
for some constant $c>0$ independent of $d$. 
Finally, the recurrence relation leads to 
\begin{equation}\label{eq:intersection_complextiy}
Q_{I}(d) = \mathcal{O}(c^{d} d!). 
\end{equation}

\begin{figure}[htp]
    \centering
    \begin{subfigure}[b]{0.475\textwidth}
        \centering
        \begin{tikzpicture}[scale=0.6]
        \begin{scope}[rotate=60, xscale=3, yscale=2]
            \coordinate (O) at (0, 0);
            \draw (O) circle (1);
        \end{scope}
        \coordinate (A) at (-2.6, 1.0);
        \coordinate (B) at ($(A) + (5.6, 0)$);
        \draw[line width=1] (A) -- (B);

        \end{tikzpicture}
        \caption{$d = 2$, ellipse intersects with line}
        \label{fig:ellipse_intersection}
    \end{subfigure}
    \hfill
    \begin{subfigure}[b]{0.475\textwidth}
        \centering
        \begin{tikzpicture}[scale=0.6]
        \begin{scope}[rotate=60, xscale=3, yscale=2, shift={(0,0)}]
            \coordinate (O) at (0,0);

            \draw (O) circle (1);
            \draw[dashed] ($(O) + (1, 0)$) arc (0:180:1 and 0.3);
            \draw ($(O) - (1, 0)$) arc (180:360:1 and 0.3);
            \draw[rotate=90] ($(O) + (1, 0)$) arc (0:180:1 and 0.3);
            \draw[rotate=90, dashed] ($(O) - (1, 0)$) arc (180:360:1 and 0.3);
        \end{scope}
        \coordinate (A) at (-1.6, 3.0);
        \coordinate (B) at ($(A) + (5.8, 0)$);
        \coordinate (C) at ($(B) + (-1.0, -1.4)$);
        \coordinate (D) at ($(A) + (-1.0, -1.4)$);
        \coordinate (E) at ($(A)!0.5!(C) + (-0.08, 0.12)$);
        \draw[line width=1] (A) -- (B) -- (C) -- (D) -- (A);
        \draw[densely dotted, line width=1] (E) ellipse (0.95 and 0.3);

        \end{tikzpicture}
        \caption{$d = 3$, ellipsoid intersects with plane}
        \label{fig:ellipsoid_intersection}
    \end{subfigure}
    \caption{Intersection illustration}
    \label{fig:intersection}
\end{figure}
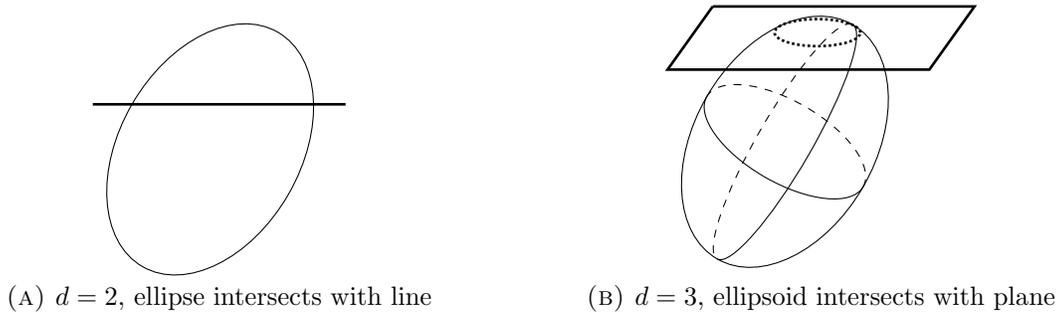

\begin{remark}
The recursive algorithm for face-ellipsoid intersection detection gives a complexity (\cref{eq:intersection_complextiy}) that grows quickly with dimension. In this work, we only consider $d=2$ or $d=3$ so that  $Q_{I}(2)$ or $Q_{I}(3)$ can be treated as constants.
It will be of future interest to explore better intersection detection algorithms in higher dimensions. 
\end{remark}

\begin{remark}
The complexity for testing whether a hyperrectangle is contained in an ellipsoid is less than $Q_{I}(d)$. 
In fact, since a hyperrectangle has $2^d$ vertices, this gives the complexity $\mathcal{O}(2^{d} d^{2})$.
Notice that when the ellipsoid is not aligned with the axes, we need to do the mapping with complexity $\mathcal{O}(d^{2})$ first, and then check with complexity $\mathcal{O}(d)$ for each vertex.
\end{remark}

For $\xb_i\in\Om$, let $q(\xb_i)$ denote the number of points in the searching area $\cE_\del^{\xb_i}(\xb_i)$, then the corresponding searching process needs $\mathcal{O}(q(\xb_i))$ intersection detections. For point clouds managed by k-d trees, it can be shown that $\mathcal{O}(q(\xb_i) \log M)$ intersection detections are needed for such range query \cite{wendland2004scattered}.
Once we find all the points inside $\cE_\del^{\xb_i}(\xb_i)$, we proceed to solve the linear minimization problem \cref{eq:nonlocalminimization_Om}. Recall that the constrained linear minimization problem \cref{eq:nonlocalminimization_Om} is a linear programming problem, hence we adopt the simplex method \cite{dantzig1990origins}, which on average has a linear complexity in $q(\xb_i)$ when the dimension is fixed \cite{smale1983average}.

Combining the above discussions, when the dimension $d$ is fixed, the total average complexity of finding a stencil for a  given interior point $\xb_i$ is $\mathcal{O}(q(\xb_i))$.
Note that by the quasi-uniform assumption (condition (2) in \Cref{def:pointcloud}), we have $q(\xb_i) \leq C \frac{| \cE_\del^{\xb_i}(\xb_i)|}{h^d}$ for some $C>0$. The volume $| \cE_\del^{\xb_i}(\xb_i)|$ depends on $\del$ and the coefficient matrix $A(\xb_i)$. 
In practice, we take $h$ to be proportional to $\del$, and therefore  $ \frac{| \cE_\del^{\xb_i}(\xb_i)|}{h^d} = \cO(1)$ considering $A(\xb)$ to be fixed. 
As a result, the total complexity of searching for neighbors near a given point can be considered as a constant. 
In the near degenerate case, i.e., $\varrho\ll 1$, $q(\xb_i)$ may grow with the decrease of $\varrho$ as mentioned in \Cref{rem:nbh_est}.

Traversing all $N$ number of interior points, we can get all the weights $\{ \om_{j,i}\}$ to complete the matrix assembly process. Therefore, the whole complexity of assembling a matrix is given by $\mathcal{O}(N)$ for a fixed problem. In addition, notice that the weights generation process is embarrassingly parallelizable, the actual computational time can be further reduced by parallelization.

\begin{remark}
From the $l_1$ type minimization problem, we get a minimal positive stencil, and therefore the assembled matrix is sparse. 
It is also recommended that a reindexing process be applied to the point cloud to reduce the bandwidth of the assembled matrix. The simplest way to do this is to sort all the interior points by coordinates so that the index distance between two close points is not too large.
\end{remark}

\begin{remark}
One may encounter memory issues using exact solvers when the linear system gets large. 
Iterative methods can be used in the case of large and sparse linear systems.  
We use the biconjugate gradient stabilized method (BiCGSTAB) \cite{van1992bi} to approximately solve the sparse linear system when $N$ is large. 
\end{remark}

\subsection{Searching area estimate}
\label{subsec:searching}
\Cref{thm:nbh_criterion} does not specify the constant $c=c(d)>0$ which determines the searching neighborhoods.
Here we discuss how to determine the searching neighborhoods in practice.
For a given fill distance $h$, we let $\del = c h (\varrho)^{-1/2}$, where the determination of $c>0$ is described below. 
Then the searching neighborhood of a point $\xb$ is the domain $\cE_\del^{\xb}(\xb)$. 
 
We now discuss the choice of $c>0$ in practice. 
We first discuss the 2d case,  and then use the 2d result to approximately estimate the searching area in 3d.
Without loss of generality, we fix $\boldsymbol{x}_{i} \in \Omega$ and assume that
\begin{equation*}
A(\boldsymbol{x}_{i})
=
\begin{pmatrix}
\varrho & 0\\
0 & 1
\end{pmatrix}.
\end{equation*}
According to \Cref{lem:appendix_1}, we need to find the smallest radius of the inscribed circles of the domains $\{ T_{i}(\mathcal{C}_{\del}^{\boldsymbol{v}}(\boldsymbol{x}_{i}))\}_{\vb\in\R^2, |\vb|=1}$. The problem is a rescaling of the case $\del=1$, as illustrated by \cref{fig:inscribed_circle}.
\begin{figure}[htp]
    \centering
    \begin{tikzpicture}[scale=0.9]
    \coordinate (x0) at (0, 0);

    \pgfmathsetmacro{\FACTOR}{4.5}
    \pgfmathsetmacro{\shift}{\FACTOR * 2}
    \pgfmathsetmacro{\rescale}{0.8}

    \pgfmathsetmacro{\rho}{0.5}
    \pgfmathsetmacro{\varTheta}{22.5}
    \pgfmathsetmacro{\varDelta}{45}
    \pgfmathsetmacro{\t}{0.449471706233453 / \FACTOR}

    \pgfmathsetmacro{\semiMinor}{sqrt(\rho) * \FACTOR}
    \pgfmathsetmacro{\semiMajor}{\FACTOR}

    \pgfmathsetmacro{\pAx}{\semiMinor * cos(\varTheta)}
    \pgfmathsetmacro{\pAy}{\semiMajor * sin(\varTheta)}
    \pgfmathsetmacro{\dA}{sqrt(\pAx ^ 2 + \pAy ^ 2)}
    \pgfmathsetmacro{\pBx}{\semiMinor * cos(\varTheta + \varDelta)}
    \pgfmathsetmacro{\pBy}{\semiMajor * sin(\varTheta + \varDelta)}
    \pgfmathsetmacro{\dB}{sqrt(\pBx ^ 2 + \pBy ^ 2)}

    \coordinate (p1) at ($(x0) + (\pAx, \pAy)$);
    \coordinate (p2) at ($(x0) + (\pBx, \pBy)$);

    \pgfmathsetmacro{\angleStart}{-10}
    \pgfmathsetmacro{\angleEnd}{120}

    \pgfmathsetmacro{\pOx}{\t * (\dB * \pAx + \dA * \pBx)}
    \pgfmathsetmacro{\pOy}{\t * (\dB * \pAy + \dA * \pBy)}
    \pgfmathsetmacro{\R}{\t * abs(\pAx * \pBy - \pAy * \pBx)}

    \coordinate (p0) at ($(x0) + (\pOx, \pOy)$);

    \draw ($(x0) + (\angleStart:{\semiMinor} and {\semiMajor})$) arc (\angleStart:\angleEnd:{\semiMinor} and {\semiMajor});
    \fill (x0) circle (1pt);
    \node[below left] at (x0) {$\boldsymbol{x}_{i}$};

    \filldraw[fill opacity=0.3, fill=gray] (x0) -- +(\varTheta:{\semiMinor} and {\semiMajor}) arc (\varTheta:\varTheta + \varDelta:{\semiMinor} and {\semiMajor}) -- cycle;
    \filldraw[fill=white] (p0) circle (\R);
    \fill (p0) circle (1pt);

    \draw[dashed] (x0) -- node[midway, below] {$\sqrt{\varrho}$} ($(x0) + (\semiMinor, 0)$);
    \draw[dashed] (x0) -- node[midway, left] {$1$} ($(x0) + (0, \semiMajor)$);
    \draw[dashed] (p0) -- node[midway, above] {$r$} ($(p0) + (\R, 0)$);
    \node[right] at ($(p0) + (\R, \R)$) {$T_{i}(\mathcal{C}_{1}^{\boldsymbol{v}}(\boldsymbol{x}_{i}))$};

    \draw ($(x0) + (\shift, 0) + (\angleStart:{\semiMinor * \rescale} and {\semiMajor * \rescale})$) arc (\angleStart:\angleEnd:{\semiMinor * \rescale} and {\semiMajor * \rescale});
    \fill ($(x0) + (\shift, 0)$) circle (1pt);
    \node[below left] at ($(x0) + (\shift, 0)$) {$\boldsymbol{x}_{i}$};

    \filldraw[fill opacity=0.3, fill=gray] ($(x0) + (\shift, 0)$) -- +(\varTheta:{\semiMinor * \rescale} and {\semiMajor * \rescale}) arc (\varTheta:\varTheta + \varDelta:{\semiMinor * \rescale} and {\semiMajor * \rescale}) -- cycle;
    \filldraw[fill=white] ($(x0) + (\pOx * \rescale, \pOy * \rescale) + (\shift, 0)$) circle (\R * \rescale);
    \fill ($(x0) + (\pOx * \rescale, \pOy * \rescale) + (\shift, 0)$) circle (1pt * \rescale);

    \draw[dashed] ($(x0) + (\shift, 0)$) -- node[midway, below] {$\delta \sqrt{\varrho}$} ($(x0) + (\semiMinor * \rescale, 0) + (\shift, 0)$);
    \draw[dashed] ($(x0) + (\shift, 0)$) -- node[midway, left] {$\delta$} ($(x0) + (0, \semiMajor * \rescale) + (\shift, 0)$);
    \draw[dashed] ($(x0) + (\pOx * \rescale, \pOy * \rescale) + (\shift, 0)$) -- ($(x0) + (\pOx * \rescale, \pOy * \rescale) + (\R * \rescale, 0) + (\shift, 0)$);
    \node[above] at ($(x0) + (\pOx * \rescale, \pOy * \rescale) + (\shift, 0)$) {\scriptsize $h \leq \delta r$};
    \node[right] at ($(x0) + (\pOx * \rescale, \pOy * \rescale) + (\R * \rescale, \R * \rescale) + (\shift, 0)$) {$T_{i}(\mathcal{C}_{\delta}^{\boldsymbol{v}}(\boldsymbol{x}_{i}))$};

    \draw[-stealth, line width=1pt] ($(x0) + (1.25 * \semiMinor, 0.25 * \semiMajor)$) -- node[midway, above] {rescale} ($(x0) + (\shift - 0.25 * \semiMinor, 0.25 * \semiMajor)$);

    \end{tikzpicture}
    \caption{An illustration of $T_{i}(\mathcal{C}_{\del}^{\boldsymbol{v}}(\boldsymbol{x}_{i}))$ and its inscribed circle as a rescaling from the $\del=1$ case.}
    \label{fig:inscribed_circle}
\end{figure}
Therefore we only need to consider the case $\del=1$ and find $r(\varrho):=\min_{\vb\in\R^2, |\vb|=1} T_{i}(\mathcal{C}_{1}^{\boldsymbol{v}}(\boldsymbol{x}_{i}))$. 
The detailed procedure for finding $r(\varrho)$ numerically is provided in \Cref{sec:appendixC}.
One may choose $c =\max_{\varrho \in (0, 1]} \sqrt{\varrho}/r(\varrho) $ and then by letting $\del =  c h (\varrho)^{-1/2}$ we have the desired relation $h\leq \del r$. 
In practice, we find that the $\sqrt{\varrho}/r(\varrho) $ is a bit smaller with smaller $\varrho > 0$ using the estimate of $r(\varrho)$.
Since smaller constant $c$ leads to a smaller searching neighborhood, we, therefore, suggest taking different $c$ for different values of $\varrho \in (0, 1]$.
In particular, in 2d, by numerical approximations, we have the following estimate of $c$,
\[
c = c_{\text{2d}}(\varrho) := 2.836 \chi_{(0, 0.01]}(\varrho) + 2.901 \chi_{(0.01, 0.1]}(\varrho) + 3.614 \chi_{(0.1, 1]}(\varrho).
\]
In 3d, it is difficult to estimate the radius of the inscribed ball in  $T_{i}(\mathcal{C}_{\del}^{\boldsymbol{v}}(\boldsymbol{x}_{i}))$. 
Therefore, we only take the intersecting ellipses of a given ellipsoid with the three planes that go through its principal axes, and
perform the 2d estimate described above to obtain an estimate of the constant $c > 0$ in 3d.  
The result is given as follows.
\[
c = c_{\text{3d}}(\varrho) := 3.623 \chi_{(0, 0.01]}(\varrho) + 3.776 \chi_{(0.01, 0.1]}(\varrho) + 4.450 \chi_{(0.1, 1]}(\varrho).
\]

In practice, we find that solutions often exist for even smaller searching areas and this means that one may take even smaller values of $c>0$ to further 
decrease the computational cost. We suggest taking $c = \frac{1}{\sqrt{3}} c_{\text{\normalfont 2d}}(\varrho) \approx 0.58 c_{\text{\normalfont 2d}}(\varrho)$ in 2d and $c = \frac{1}{\sqrt[3]{18}} c_{\text{\normalfont 3d}} \approx 0.38 C_{\text{\normalfont 3d}}(\varrho)$ in 3d first, and if no solution exists resetting $c = c_{\text{\normalfont 2d}}(\varrho)$ in 2d and $c = c_{\text{\normalfont 3d}}(\varrho)$ in 3d.
This procedure could reduce the number of points in a searching neighborhood by a large factor. 


\section{Numerical Results}
\label{sec:num}
In this section, we report the results of numerical experiments for the study of the numerical accuracy of our method.
We present 2d numerical results in \Cref{sec:num2d} and 3d numerical results in \Cref{sec:num3d}. 

\subsection{2d numerical tests}
\label{sec:num2d}
We test our numerical algorithm in 2d using two domains. 
The first domain is a unit disk given by $\{ x_1^2+x_2^2<1\}$, and the second domain is an
L-shaped domain given by  $(-1,1)^2\backslash [0,1]^2$.
For the kernel function, we use a truncated fractional kernel $\ga(r) = C_\alpha r^{-\alpha} \chi_{\{ |r|<1\}}$ with $\alpha \in (2, d+2)$ that satisfies \cref{eq:kernel_secondmoment}. By \cite[Theorem 6]{seibold2008minimal}, $\alpha>2$ is necessary for the linear programming problem to select points close to the central point.
For both 2d and 3d, we use $\alpha =3$ in our numerical experiments.
Other choices such as the truncated Gaussian kernel (\cite{aksoylu2020choice}) may also be used and one can observe similar convergence rates with proper parameter tuning of the truncated Gaussian kernel. Detailed discussions of other kernel selections are omitted.
We implement the numerical algorithm with $p=2$. 
Smooth manufactured solutions are used in our tests with the right-hand side of \cref{eq:elliptic} computed based on them. 

\subsubsection{Tests for continuous coefficient matrices}
We first test our algorithm for continuous coefficient matrices. Our baseline is $A_{0}(\boldsymbol{x}) = I$ with $\varrho = 1$.
A list of coefficient matrices used in numerical experiments is given below. 
\begin{equation*}
\begin{matrix}
\# & A(\boldsymbol{x}) & \varrho\\
1 &
\begin{pmatrix}
1 - 0.5 |x_{1}| & 0\\
0 & 0.25 + 0.25 |x_{2}|
\end{pmatrix}
& 0.2500\\[1em]
2 & \displaystyle \frac{1}{2.21}
\begin{pmatrix}
2 - |x_{1}| & 0.5\\
0.5 & 0.5 + 0.5 |x_{2}|
\end{pmatrix}
& 0.0864\\[1em]
3 &
\begin{pmatrix}
1 - 0.5 |x_{1}| & 0\\
0 & 0.025 + 0.025 |x_{2}|
\end{pmatrix}
& 0.0250\\[1em]
4 &
\begin{pmatrix}
1 - 0.5 |x_{1}| & 0\\
0 & 0.0025 + 0.0025 |x_{2}|
\end{pmatrix}
& 0.0025\\[1em]
5 & \displaystyle \frac{1}{2.001}
\begin{pmatrix}
2 - |x_{1} (0.5 - x_{2})| & 0.025\\
0.025 & 0.01 + 0.0025 x_{1} \exp(x_{2})
\end{pmatrix}
& 0.0014
\end{matrix}
\end{equation*}
Notice that the value $\varrho$ is computed approximately in the domain $[-1, 1]^{2}$, which contains both the unit disk and the L-shaped domain as subsets. 
Numerical results are presented in \cref{fig:2d_continuous_test_1,fig:2d_continuous_test_2,fig:2d_continuous_test_3}. 
We observe second-order convergence in $h$ for all cases, which is better than the theoretical analysis in \Cref{thm:convergence} for $p=2$. 
This superconvergence phenomenon may likely be due to the cancellation of terms as mentioned in  \Cref{rmk:truncationerror,rmk:superconvergence}. 
In \cref{fig:2d_continuous_test_1,fig:2d_continuous_test_2}, we test our method on two manufactured solutions $u_{1}^{(2d)}(x_1,x_2) = x_1  x_2 +\cos(x_1)\exp(x_2)$ and 
$u_{2}^{(2d)}(x_1,x_2) = (x_1+x_2)^4 \cos(x_1(x_1+2x_2))$. The numerical errors in these graphs grow as  $\varrho$ becomes smaller as predicted by theory. 
In some very special cases, the numerical errors may behave differently as  $\varrho\to0$, and one example is with $u_{3}^{(2d)}(x_1,x_2) =x_1^2 +\sin(x_2)\exp(x_2^2 - 1)$
illustrated by \cref{fig:2d_continuous_test_3}. 
The reason for this abnormal behavior is because the elliptic operator degenerates to $\partial^2_{x_1}$ as $\varrho\to0$ by our choices of $A(\xb)$ and the exact solution in this case is 
a second-order polynomial in $x_1$ which can be exactly reproduced by our method. 

\begin{figure}[htp]
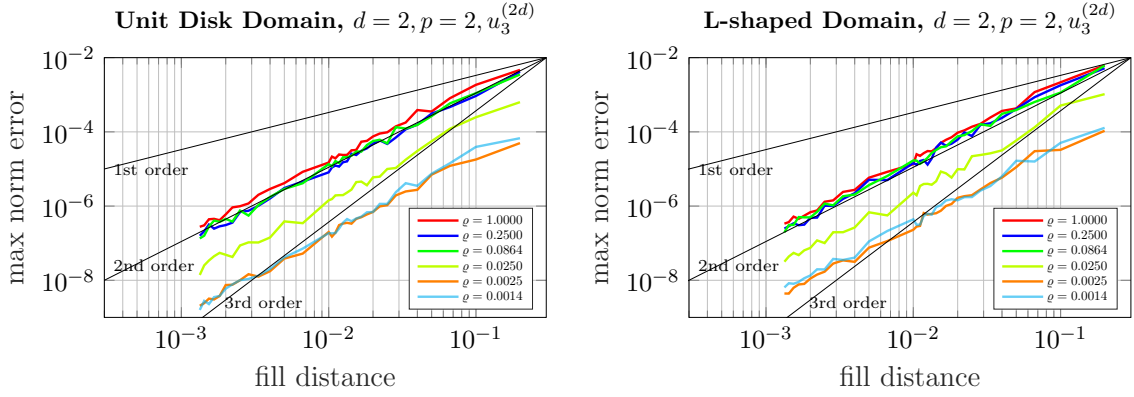

    \centering
    \begin{subfigure}[b]{0.490\textwidth}
        \centering
        \resizebox{\textwidth}{!}{

        }
        \phantomsubcaption
    \end{subfigure}
    \caption{2d tests for a separable function}
    \label{fig:2d_continuous_test_3}
\end{figure}

\subsubsection{Tests for discontinuous coefficient matrices}
We now show numerical results for discontinuous coefficient matrices. 
Notice that when $A(\xb)$ is discontinuous, the elliptic equation in the non-divergence form cannot be recast into a variational form. 
Therefore, non-variational methods are especially important in this case. 
We divide the computational domains into smaller blocks and define piecewise constant coefficient matrices with respect to the blocks. 
More specifically, for $n\in\N$, we divide the domain $[-1, 1]^{2}$ into $(2n+1)^2$ blocks, and define the corresponding piecewise constant 
coefficient matrix 
\[
A_{\psi}(\boldsymbol{x}, n) := (B_{\psi}(\boldsymbol{x}, n) + B_{\psi}^{T}(\boldsymbol{x}, n) + 4 I) / 8,
\]
where $B_{\psi}(\boldsymbol{x}, n)$ is genereated by \texttt{mt19937}\cite{matsumoto1998mersenne} (a pesudorandom number generator) with seed $\psi(\boldsymbol{x}, n) := \text{round}(x_{1} * n) * 2 + \text{round}(x_{2} * n) * 3 \mod 2^{32}$. Here $\text{round}(x)$ maps $x$ to the closest integer.
A list of coefficient matrices used in our experiments is given below. 
\begin{equation*}
\begin{matrix}
\# & A(\boldsymbol{x}) & \varrho & \text{description}\\
6 & A_{\psi}(\boldsymbol{x}, 10^{10}) & 0.2500 & \text{dense blocks}\\
7 & A_{\psi}(\boldsymbol{x}, 10^{4}) & 0.2500 & \text{medium blocks}\\
8 & A_{\psi}(\boldsymbol{x}, 10^{0}) & 0.2500 & \text{loose blocks}
\end{matrix}
\end{equation*}
In addition, we have the last example matrix:
\begin{equation*}
A_{9}(\boldsymbol{x}) =
\left \{
\begin{aligned}
&A_{2}(\boldsymbol{x}), & x_{1} < 0\\
&A_{3}(\boldsymbol{x}), &\text{otherwise}\\
\end{aligned}
\right .
\qquad \text{with} \qquad
\varrho = 0.0250.
\end{equation*}
Numerical results are presented in \cref{fig:2d_discontinuous_test_1,fig:2d_discontinuous_test_2}.
We observe similar second-order convergence in $h$ for all cases. 

\begin{figure}[htp]
    \centering
    \begin{subfigure}[b]{0.490\textwidth}
        \centering
        \resizebox{\textwidth}{!}{
        \begin{tikzpicture}
        \begin{axis}[%
        width=2.55in,
        height=1.5in,
        scale only axis,
        xmode=log,
        xmin=3e-04,
        xmax=0.3,
        xminorticks=true,
        xlabel style={font=\color{white!15!black}},
        xlabel={\large fill distance},
        ymode=log,
        ymin=3e-09,
        ymax=0.03,
        yminorticks=true,
        ylabel style={font=\color{white!15!black}},
        ylabel={\large max norm error},
        title style={align=center, font=\bfseries},
        title={\small Unit Disk Domain, $d = 2, p = 2, u_{1}^{(2d)}$},
        xmajorgrids,
        xminorgrids,
        ymajorgrids,
        yminorgrids,
        legend style={at={(0.97,0.03)}, nodes={scale=0.5, transform shape}, anchor=south east, legend cell align=left, align=left, draw=white!15!black}
        ]
        \addplot [color=red, line width=1pt] table[row sep=crcr]{%
        0.198983854587716   0.00296577793017649\\
        0.0998705812472178  0.000817950495761721\\
        0.0666408526555842  0.000385874382076157\\
        0.0499910849999656  0.000189764879025001\\
        0.0399982806717673  0.000138949269300248\\
        0.0333321068124764  8.67543134694948e-05\\
        0.0285696201019069  6.35899456433719e-05\\
        0.0249919930274932  9.37092977401077e-05\\
        0.0222120342891224  5.88060487596298e-05\\
        0.0199999963261506  2.68230149016446e-05\\
        0.0181812956477009  2.80411377995282e-05\\
        0.0166651641450463  1.85842442277373e-05\\
        0.015384328963273   2.48048283424396e-05\\
        0.0142856175244916  1.33831664168582e-05\\
        0.0133326387907105  1.0669517885975e-05\\
        0.0124986840340357  9.69307713627465e-06\\
        0.0117645043173365  1.18817889962486e-05\\
        0.0111102401232889  1.19780008527304e-05\\
        0.0105261748821938  1.0703669259815e-05\\
        0.00999921217628962 7.91856919635237e-06\\
        0.00666657614059933 3.52617312193715e-06\\
        0.00499998131875709 1.53046819484359e-06\\
        0.00399999800442724 2.48179027861894e-06\\
        0.00333331174692903 1.21078977421973e-06\\
        0.0028571421843815  5.84068232223345e-07\\
        0.00249999897088586 4.67324691388882e-07\\
        0.00222222129568555 4.62395989497821e-07\\
        0.00199999219026464 2.2220215445401e-07\\
        0.00181818033591482 1.70191301274158e-07\\
        0.00166666587024734 2.15901772682159e-07\\
        0.00153845446252526 1.29261109815815e-07\\
        0.00142856662480272 1.62499884437395e-07\\
        0.00133333308002394 1.737635195731e-07\\
        };
        \addlegendentry{$\varrho = 0.2500$, dense blocks} 

        \addplot [color=blue, line width=1pt] table[row sep=crcr]{%
        0.198983854587716   0.00355648974374634\\
        0.0998705812472178  0.000785753512027032\\
        0.0666408526555842  0.000330468527856231\\
        0.0499910849999656  0.000168796897087153\\
        0.0399982806717673  0.000148511419869379\\
        0.0333321068124764  6.44837627343797e-05\\
        0.0285696201019069  8.11517258654249e-05\\
        0.0249919930274932  6.34163346477923e-05\\
        0.0222120342891224  4.01840633155182e-05\\
        0.0199999963261506  4.26752879674197e-05\\
        0.0181812956477009  2.40296734583811e-05\\
        0.0166651641450463  1.28112928257096e-05\\
        0.015384328963273   2.26270364445114e-05\\
        0.0142856175244916  1.57890936112182e-05\\
        0.0133326387907105  1.60251843386305e-05\\
        0.0124986840340357  1.07352463658561e-05\\
        0.0117645043173365  9.36355539593592e-06\\
        0.0111102401232889  5.86217559805924e-06\\
        0.0105261748821938  8.3472212453195e-06\\
        0.00999921217628962 5.11915551726361e-06\\
        0.00666657614059933 2.36789948304672e-06\\
        0.00499998131875709 1.42074302056372e-06\\
        0.00399999800442724 2.16088483839805e-06\\
        0.00333331174692903 8.5593584908672e-07\\
        0.0028571421843815  5.10724368463222e-07\\
        0.00249999897088586 7.9809988084989e-07\\
        0.00222222129568555 7.40206266724996e-07\\
        0.00199999219026464 2.70090640963616e-07\\
        0.00181818033591482 1.51250756141863e-07\\
        0.00166666587024734 1.57944943346422e-07\\
        0.00153845446252526 1.43531939778008e-07\\
        0.00142856662480272 1.35552208124778e-07\\
        0.00133333308002394 1.36157126018333e-07\\
        };
        \addlegendentry{$\varrho = 0.2500$, medium blocks} 

        \addplot [color=green, line width=1pt] table[row sep=crcr]{%
        0.198983854587716   0.00280007433798324\\
        0.0998705812472178  0.000869733727272504\\
        0.0666408526555842  0.000449278302583345\\
        0.0499910849999656  0.000177322748520781\\
        0.0399982806717673  0.000104326400050425\\
        0.0333321068124764  0.000106695023052161\\
        0.0285696201019069  7.41443138907449e-05\\
        0.0249919930274932  6.02055132057355e-05\\
        0.0222120342891224  4.20003861414386e-05\\
        0.0199999963261506  4.3875728019005e-05\\
        0.0181812956477009  2.64345452367643e-05\\
        0.0166651641450463  1.58369140550452e-05\\
        0.015384328963273   2.36843893275385e-05\\
        0.0142856175244916  1.03797121620008e-05\\
        0.0133326387907105  1.50299028081058e-05\\
        0.0124986840340357  1.0901457164092e-05\\
        0.0117645043173365  7.99205777379797e-06\\
        0.0111102401232889  5.6687331759786e-06\\
        0.0105261748821938  6.25804540654862e-06\\
        0.00999921217628962 7.08605099641879e-06\\
        0.00666657614059933 2.12929541842932e-06\\
        0.00499998131875709 1.73093881139508e-06\\
        0.00399999800442724 1.41274249054746e-06\\
        0.00333331174692903 8.88725174341687e-07\\
        0.0028571421843815  4.28351885628508e-07\\
        0.00249999897088586 5.59158756185951e-07\\
        0.00222222129568555 6.87344192007444e-07\\
        0.00199999219026464 2.25612364901373e-07\\
        0.00181818033591482 2.89557771537829e-07\\
        0.00166666587024734 1.24594258910093e-07\\
        0.00153845446252526 1.61242538654349e-07\\
        0.00142856662480272 1.14882284352191e-07\\
        0.00133333308002394 1.17091270901426e-07\\
        };
        \addlegendentry{$\varrho = 0.2500$, loose blocks} 

        \addplot [color=orange, line width=1pt] table[row sep=crcr]{%
        0.198983854587716   0.00892909015955623\\
        0.0998705812472178  0.00223189608313579\\
        0.0666408526555842  0.00119999166646689\\
        0.0499910849999656  0.000410373393363805\\
        0.0399982806717673  0.000334123455114188\\
        0.0333321068124764  0.000257003287522983\\
        0.0285696201019069  0.000136089798223882\\
        0.0249919930274932  0.000149147238073022\\
        0.0222120342891224  0.000101163522673531\\
        0.0199999963261506  8.09309460816365e-05\\
        0.0181812956477009  4.96047863265403e-05\\
        0.0166651641450463  4.44955302061345e-05\\
        0.015384328963273   4.57720211417545e-05\\
        0.0142856175244916  4.22705507139298e-05\\
        0.0133326387907105  4.66701866286279e-05\\
        0.0124986840340357  2.7802642346475e-05\\
        0.0117645043173365  3.12202006814699e-05\\
        0.0111102401232889  3.035065504009e-05\\
        0.0105261748821938  2.9146945007108e-05\\
        0.00999921217628962 2.87898819866239e-05\\
        0.00666657614059933 6.20376774174147e-06\\
        0.00499998131875709 3.64143127251637e-06\\
        0.00399999800442724 2.37842446604297e-06\\
        0.00333331174692903 1.48691920420774e-06\\
        0.0028571421843815  1.74489678106404e-06\\
        0.00249999897088586 7.40969757551113e-07\\
        0.00222222129568555 7.59466624700167e-07\\
        0.00199999219026464 5.77473601826028e-07\\
        0.00181818033591482 5.18026723295151e-07\\
        0.00166666587024734 3.47569045988649e-07\\
        0.00153845446252526 2.87523793218725e-07\\
        0.00142856662480272 1.90151078216516e-07\\
        0.00133333308002394 1.84854642926169e-07\\
        };
        \addlegendentry{$\varrho = 0.0250$, two parts} 

        \addplot [color=black, forget plot]
          table[row sep=crcr]{%
        0.3   0.03\\
        3e-05  3e-06\\
        };
        \node[right] at (3e-04, 3e-05) {\tiny 1st order};
        \addplot [color=black, forget plot]
          table[row sep=crcr]{%
        0.3   0.03\\
        3e-05  3e-10\\
        };
        \node[above right] at (3e-04, 3e-08) {\tiny 2nd order};
        \addplot [color=black, forget plot]
          table[row sep=crcr]{%
        0.3   0.03\\
        3e-05  3e-14\\
        };
        \node[above right] at (1.7e-03, 3e-09) {\tiny 3rd order};
        \end{axis}

        \end{tikzpicture}
        }
        \phantomsubcaption
    \end{subfigure}
    \hfill
    \begin{subfigure}[b]{0.490\textwidth}
        \centering
        \resizebox{\textwidth}{!}{
        \begin{tikzpicture}
        \begin{axis}[%
        width=2.55in,
        height=1.5in,
        scale only axis,
        xmode=log,
        xmin=3e-04,
        xmax=0.3,
        xminorticks=true,
        xlabel style={font=\color{white!15!black}},
        xlabel={\large fill distance},
        ymode=log,
        ymin=3e-07,
        ymax=3,
        yminorticks=true,
        ylabel style={font=\color{white!15!black}},
        ylabel={\large max norm error},
        title style={align=center, font=\bfseries},
        title={\small Unit Disk Domain, $d = 2, p = 2, u_{2}^{(2d)}$},
        xmajorgrids,
        xminorgrids,
        ymajorgrids,
        yminorgrids,
        legend style={at={(0.97,0.03)}, nodes={scale=0.5, transform shape}, anchor=south east, legend cell align=left, align=left, draw=white!15!black}
        ]
        \addplot [color=red, line width=1pt] table[row sep=crcr]{%
        0.198983854587716   0.290680232162311\\
        0.0998705812472178  0.0556517041193813\\
        0.0666408526555842  0.0304011550775205\\
        0.0499910849999656  0.0203948632126013\\
        0.0399982806717673  0.0120557304402611\\
        0.0333321068124764  0.00818793657756212\\
        0.0285696201019069  0.00537074992418685\\
        0.0249919930274932  0.00366322512305572\\
        0.0222120342891224  0.0033637624530235\\
        0.0199999963261506  0.00290535996680119\\
        0.0181812956477009  0.0023619781904034\\
        0.0166651641450463  0.00217139846468339\\
        0.015384328963273   0.00161166579949024\\
        0.0142856175244916  0.00145485946985524\\
        0.0133326387907105  0.00121446119260005\\
        0.0124986840340357  0.00114154339272787\\
        0.0117645043173365  0.00118481498051382\\
        0.0111102401232889  0.00117145448869926\\
        0.0105261748821938  0.000955260186398688\\
        0.00999921217628962 0.00075748876314119\\
        0.00666657614059933 0.000287296153133165\\
        0.00499998131875709 0.000158575799889205\\
        0.00399999800442724 0.000120963562928234\\
        0.00333331174692903 9.23941655375238e-05\\
        0.0028571421843815  6.19234749565933e-05\\
        0.00249999897088586 4.38022711833908e-05\\
        0.00222222129568555 3.01008356410337e-05\\
        0.00199999219026464 3.40848919577752e-05\\
        0.00181818033591482 2.79719368224685e-05\\
        0.00166666587024734 1.99228192523382e-05\\
        0.00153845446252526 1.8882725239977e-05\\
        0.00142856662480272 1.59906073345928e-05\\
        0.00133333308002394 1.65201968208573e-05\\
        };
        \addlegendentry{$\varrho = 0.2500$, dense blocks} 

        \addplot [color=blue, line width=1pt] table[row sep=crcr]{%
        0.198983854587716   0.288449355737296\\
        0.0998705812472178  0.0565835715681466\\
        0.0666408526555842  0.0322753322335778\\
        0.0499910849999656  0.0196670468186998\\
        0.0399982806717673  0.00997916670010202\\
        0.0333321068124764  0.00836845902741334\\
        0.0285696201019069  0.00715235089410104\\
        0.0249919930274932  0.00454863666073635\\
        0.0222120342891224  0.0034582800460663\\
        0.0199999963261506  0.00345749448102395\\
        0.0181812956477009  0.00215583915302997\\
        0.0166651641450463  0.00203526121671566\\
        0.015384328963273   0.00159620687649675\\
        0.0142856175244916  0.00139481718583101\\
        0.0133326387907105  0.00133264052565751\\
        0.0124986840340357  0.00142703817509793\\
        0.0117645043173365  0.00100286648201742\\
        0.0111102401232889  0.00113581690430942\\
        0.0105261748821938  0.000743368465857652\\
        0.00999921217628962 0.000927964875711007\\
        0.00666657614059933 0.000222247391379493\\
        0.00499998131875709 0.000171201084279815\\
        0.00399999800442724 0.000116150111502566\\
        0.00333331174692903 8.97925138801936e-05\\
        0.0028571421843815  8.49037424776045e-05\\
        0.00249999897088586 4.76451254285415e-05\\
        0.00222222129568555 3.43969438404423e-05\\
        0.00199999219026464 3.06580563519399e-05\\
        0.00181818033591482 2.7968670083478e-05\\
        0.00166666587024734 2.19695562128086e-05\\
        0.00153845446252526 1.77460453272271e-05\\
        0.00142856662480272 1.57929237327714e-05\\
        0.00133333308002394 1.45905873818464e-05\\
        };
        \addlegendentry{$\varrho = 0.2500$, medium blocks} 

        \addplot [color=green, line width=1pt] table[row sep=crcr]{%
        0.198983854587716   0.321072020232384\\
        0.0998705812472178  0.0569947476089211\\
        0.0666408526555842  0.0315289048523587\\
        0.0499910849999656  0.0193607052468067\\
        0.0399982806717673  0.0103206462181962\\
        0.0333321068124764  0.00913293686593408\\
        0.0285696201019069  0.00601590699983667\\
        0.0249919930274932  0.00369952358918757\\
        0.0222120342891224  0.00326846432267536\\
        0.0199999963261506  0.00305580367952096\\
        0.0181812956477009  0.00273309183729409\\
        0.0166651641450463  0.00206270428665606\\
        0.015384328963273   0.00161940287097317\\
        0.0142856175244916  0.00127430912728888\\
        0.0133326387907105  0.00143750306761714\\
        0.0124986840340357  0.00105472113489713\\
        0.0117645043173365  0.001123286710766\\
        0.0111102401232889  0.0010524284259229\\
        0.0105261748821938  0.000823471967676181\\
        0.00999921217628962 0.000795224233188896\\
        0.00666657614059933 0.000319101862928828\\
        0.00499998131875709 0.000160324190540573\\
        0.00399999800442724 0.000109246783196904\\
        0.00333331174692903 9.66560620685453e-05\\
        0.0028571421843815  7.08625532428453e-05\\
        0.00249999897088586 5.48767951743523e-05\\
        0.00222222129568555 3.05792719982323e-05\\
        0.00199999219026464 2.75486135646874e-05\\
        0.00181818033591482 2.62738728752376e-05\\
        0.00166666587024734 2.47177375952701e-05\\
        0.00153845446252526 1.98371490313098e-05\\
        0.00142856662480272 1.39725704764171e-05\\
        0.00133333308002394 1.18151750145845e-05\\
        };
        \addlegendentry{$\varrho = 0.2500$, loose blocks} 

        \addplot [color=orange, line width=1pt] table[row sep=crcr]{%
        0.198983854587716   0.491468051658335\\
        0.0998705812472178  0.190685153093759\\
        0.0666408526555842  0.0883578692611009\\
        0.0499910849999656  0.0359907126275024\\
        0.0399982806717673  0.0459865509460435\\
        0.0333321068124764  0.0258923605984912\\
        0.0285696201019069  0.0197610291361237\\
        0.0249919930274932  0.0136655033029259\\
        0.0222120342891224  0.0147951178753708\\
        0.0199999963261506  0.00860406008846276\\
        0.0181812956477009  0.00546288175365123\\
        0.0166651641450463  0.00697166989541764\\
        0.015384328963273   0.00532241693967839\\
        0.0142856175244916  0.00427979113958232\\
        0.0133326387907105  0.00308675340416087\\
        0.0124986840340357  0.00242066075082326\\
        0.0117645043173365  0.00311924993786661\\
        0.0111102401232889  0.00282589844372216\\
        0.0105261748821938  0.00250632767309433\\
        0.00999921217628962 0.00198462908678676\\
        0.00666657614059933 0.00112900804257809\\
        0.00499998131875709 0.00047704521238412\\
        0.00399999800442724 0.000306813958185326\\
        0.00333331174692903 0.000216909721958092\\
        0.0028571421843815  0.000199122783114181\\
        0.00249999897088586 0.000113372302876558\\
        0.00222222129568555 9.59397423336839e-05\\
        0.00199999219026464 8.21497304043239e-05\\
        0.00181818033591482 6.89762566380425e-05\\
        0.00166666587024734 5.52458900082708e-05\\
        0.00153845446252526 5.46333639190877e-05\\
        0.00142856662480272 4.16883683667102e-05\\
        0.00133333308002394 3.7984548001857e-05\\
        };
        \addlegendentry{$\varrho = 0.0250$, two parts} 

        \addplot [color=black, forget plot]
          table[row sep=crcr]{%
        0.3     3\\
        3e-05   3e-04\\
        };
        \node[right] at (3e-04, 3e-03) {\tiny 1st order};
        \addplot [color=black, forget plot]
          table[row sep=crcr]{%
        0.3     3\\
        3e-05   3e-08\\
        };
        \node[above right] at (3e-04, 3e-06) {\tiny 2nd order};
        \addplot [color=black, forget plot]
          table[row sep=crcr]{%
        0.3     3\\
        3e-05   3e-12\\
        };
        \node[above right] at (1.7e-03, 3e-07) {\tiny 3rd order};
        \end{axis}

        \end{tikzpicture}
        }
        \phantomsubcaption
    \end{subfigure}
    \caption{2d tests on the unit disk domain with discontinuous coefficient matrices}
    \label{fig:2d_discontinuous_test_1}
\end{figure}

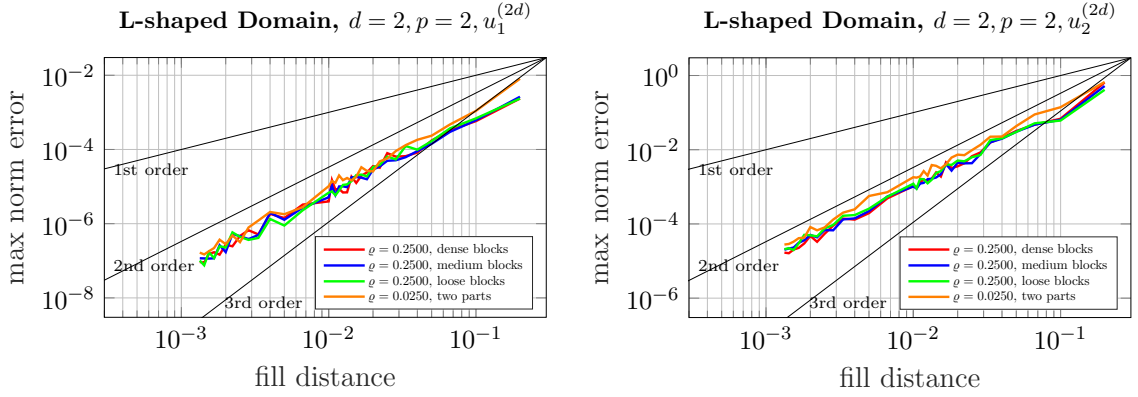
\begin{figure}[htp]
    \centering
    \begin{subfigure}[b]{0.490\textwidth}
        \centering
        \resizebox{\textwidth}{!}{
        \begin{tikzpicture}
        \begin{axis}[%
        width=2.55in,
        height=1.5in,
        scale only axis,
        xmode=log,
        xmin=3e-04,
        xmax=0.3,
        xminorticks=true,
        xlabel style={font=\color{white!15!black}},
        xlabel={\large fill distance},
        ymode=log,
        ymin=3e-09,
        ymax=0.03,
        yminorticks=true,
        ylabel style={font=\color{white!15!black}},
        ylabel={\large max norm error},
        title style={align=center, font=\bfseries},
        title={\small L-shaped Domain, $d = 2, p = 2, u_{1}^{(2d)}$},
        xmajorgrids,
        xminorgrids,
        ymajorgrids,
        yminorgrids,
        legend style={at={(0.97,0.03)}, nodes={scale=0.5, transform shape}, anchor=south east, legend cell align=left, align=left, draw=white!15!black}
        ]
        \addplot [color=red, line width=1pt] table[row sep=crcr]{%
        0.198983854587716   0.00235657952815038\\
        0.0998705812472178  0.000578776904978096\\
        0.0666408526555842  0.000329691473801907\\
        0.0499910849999656  0.000174627034041697\\
        0.0399982806717673  8.22312995519825e-05\\
        0.0333321068124764  6.60786570847449e-05\\
        0.0285696201019069  6.42189568957896e-05\\
        0.0249919930274933  7.96774713575088e-05\\
        0.0222120342891224  3.20664333195442e-05\\
        0.0199999963261506  3.55390109556541e-05\\
        0.0181812956477009  2.39865783304705e-05\\
        0.0166651641450463  1.95968802263469e-05\\
        0.015384328963273   1.25248567708347e-05\\
        0.0142856175244916  1.8483456534657e-05\\
        0.0133326387907105  6.99626386224672e-06\\
        0.0124986840340357  6.93651335736956e-06\\
        0.0117645043173365  8.69416518844979e-06\\
        0.0111102401232889  8.2313722948868e-06\\
        0.0105263066128438  1.49000398033117e-05\\
        0.0099995353399169  3.98351374175654e-06\\
        0.00666657614059933 3.27376572495375e-06\\
        0.00499998131875709 1.42106327527536e-06\\
        0.00399999800442724 1.83794457964126e-06\\
        0.00333331174692903 5.13173754379537e-07\\
        0.0028571421843815  6.65468580463013e-07\\
        0.00249999897088586 4.67799772696509e-07\\
        0.00222222129568555 2.46037979545477e-07\\
        0.00199999219026464 2.69966661692322e-07\\
        0.00181818033591482 1.46400799927449e-07\\
        0.00166666587024734 1.70122193221545e-07\\
        0.00153845446252526 1.52364037830921e-07\\
        0.00142856662480272 8.87859636922173e-08\\
        0.00133333308002394 9.29713714903357e-08\\
        };
        \addlegendentry{$\varrho = 0.2500$, dense blocks} 

        \addplot [color=blue, line width=1pt] table[row sep=crcr]{%
        0.198983854587716   0.00263363432968566\\
        0.0998705812472178  0.000613198544793025\\
        0.0666408526555842  0.000297602019610488\\
        0.0499910849999656  0.000142049176264369\\
        0.0399982806717673  9.02929952126819e-05\\
        0.0333321068124764  6.19588609072075e-05\\
        0.0285696201019069  5.11700123089209e-05\\
        0.0249919930274933  4.95715380466333e-05\\
        0.0222120342891224  3.48416661155593e-05\\
        0.0199999963261506  2.87589761971407e-05\\
        0.0181812956477009  1.57691085198586e-05\\
        0.0166651641450463  1.73845375555093e-05\\
        0.015384328963273   1.81351302952493e-05\\
        0.0142856175244916  1.77243342969202e-05\\
        0.0133326387907105  9.65556052157091e-06\\
        0.0124986840340357  1.01337433893178e-05\\
        0.0117645043173365  1.00785537042736e-05\\
        0.0111102401232889  5.92576884972384e-06\\
        0.0105263066128438  1.06679878022486e-05\\
        0.0099995353399169  5.1286208757606e-06\\
        0.00666657614059933 2.70078264319462e-06\\
        0.00499998131875709 1.27406424477172e-06\\
        0.00399999800442724 1.96299147425805e-06\\
        0.00333331174692903 4.95832329638901e-07\\
        0.0028571421843815  3.85906368860489e-07\\
        0.00249999897088586 4.61076458790899e-07\\
        0.00222222129568555 5.284062836175e-07\\
        0.00199999219026464 1.70970242074908e-07\\
        0.00181818033591482 2.62581191234901e-07\\
        0.00166666587024734 1.16751692980088e-07\\
        0.00153845446252526 1.14628105674086e-07\\
        0.00142856662480272 1.15945126610839e-07\\
        0.00133333308002394 1.16759414581225e-07\\
        };
        \addlegendentry{$\varrho = 0.2500$, medium blocks} 

        \addplot [color=green, line width=1pt] table[row sep=crcr]{%
        0.198983854587716   0.00235063430489979\\
        0.0998705812472178  0.000687454565427403\\
        0.0666408526555842  0.000371142176376349\\
        0.0499910849999656  0.000167762177397446\\
        0.0399982806717673  9.8662538425609e-05\\
        0.0333321068124764  0.000123904444548018\\
        0.0285696201019069  5.3979835869411e-05\\
        0.0249919930274933  6.43854445279146e-05\\
        0.0222120342891224  3.85588038209761e-05\\
        0.0199999963261506  3.33080254759643e-05\\
        0.0181812956477009  1.88627952808496e-05\\
        0.0166651641450463  2.10434251386449e-05\\
        0.015384328963273   2.03661588857873e-05\\
        0.0142856175244916  1.39980218452074e-05\\
        0.0133326387907105  1.14594206228613e-05\\
        0.0124986840340357  1.10849869739971e-05\\
        0.0117645043173365  7.8692184215523e-06\\
        0.0111102401232889  5.82642255142485e-06\\
        0.0105263066128438  7.42644909756951e-06\\
        0.0099995353399169  6.84085883628782e-06\\
        0.00666657614059933 2.29824471942308e-06\\
        0.00499998131875709 8.90510872153527e-07\\
        0.00399999800442724 1.34082143110614e-06\\
        0.00333331174692903 4.13971156465465e-07\\
        0.0028571421843815  3.65460680606944e-07\\
        0.00249999897088586 4.4167688173502e-07\\
        0.00222222129568555 5.83314005764279e-07\\
        0.00199999219026464 2.29352772440627e-07\\
        0.00181818033591482 2.4640154761002e-07\\
        0.00166666587024734 1.18343446486868e-07\\
        0.00153845446252526 1.70999767235003e-07\\
        0.00142856662480272 7.5598704629698e-08\\
        0.00133333308002394 1.01360873694389e-07\\
        };
        \addlegendentry{$\varrho = 0.2500$, loose blocks} 

        \addplot [color=orange, line width=1pt] table[row sep=crcr]{%
        0.198983854587716   0.00804144886644631\\
        0.0998705812472178  0.00109371186265894\\
        0.0666408526555842  0.000472568954079766\\
        0.0499910849999656  0.000235009881075443\\
        0.0399982806717673  0.000182418001132112\\
        0.0333321068124764  0.000130837327836697\\
        0.0285696201019069  9.07119981746196e-05\\
        0.0249919930274933  5.70691769243847e-05\\
        0.0222120342891224  4.78059792792251e-05\\
        0.0199999963261506  2.55858460249669e-05\\
        0.0181812956477009  2.6435386403012e-05\\
        0.0166651641450463  3.32360118933117e-05\\
        0.015384328963273   1.83247530267661e-05\\
        0.0142856175244916  1.46858014690165e-05\\
        0.0133326387907105  1.70334668795347e-05\\
        0.0124986840340357  1.48584654787731e-05\\
        0.0117645043173365  1.99337047925852e-05\\
        0.0111102401232889  1.2668525488313e-05\\
        0.0105263066128438  1.40940658321931e-05\\
        0.0099995353399169  1.02388287542277e-05\\
        0.00666657614059933 2.77864924214111e-06\\
        0.00499998131875709 1.79809513722873e-06\\
        0.00399999800442724 2.06562100202312e-06\\
        0.00333331174692903 1.21823609866567e-06\\
        0.0028571421843815  7.90222143387531e-07\\
        0.00249999897088586 3.1747558226769e-07\\
        0.00222222129568555 4.82403330281933e-07\\
        0.00199999219026464 2.79493483579252e-07\\
        0.00181818033591482 3.70766207180573e-07\\
        0.00166666587024734 2.20314910981401e-07\\
        0.00153845446252526 1.81007540289713e-07\\
        0.00142856662480272 1.4829506755909e-07\\
        0.00133333308002394 1.65967592558047e-07\\
        };
        \addlegendentry{$\varrho = 0.0250$, two parts} 

        \addplot [color=black, forget plot]
          table[row sep=crcr]{%
        0.3   0.03\\
        3e-05  3e-06\\
        };
        \node[right] at (3e-04, 3e-05) {\tiny 1st order};
        \addplot [color=black, forget plot]
          table[row sep=crcr]{%
        0.3   0.03\\
        3e-05  3e-10\\
        };
        \node[above right] at (3e-04, 3e-08) {\tiny 2nd order};
        \addplot [color=black, forget plot]
          table[row sep=crcr]{%
        0.3   0.03\\
        3e-05  3e-14\\
        };
        \node[above right] at (1.7e-03, 3e-09) {\tiny 3rd order};
        \end{axis}

        \end{tikzpicture}
        }
        \phantomsubcaption
    \end{subfigure}
    \hfill
    \begin{subfigure}[b]{0.490\textwidth}
        \centering
        \resizebox{\textwidth}{!}{
        \begin{tikzpicture}
        \begin{axis}[%
        width=2.55in,
        height=1.5in,
        scale only axis,
        xmode=log,
        xmin=3e-04,
        xmax=0.3,
        xminorticks=true,
        xlabel style={font=\color{white!15!black}},
        xlabel={\large fill distance},
        ymode=log,
        ymin=3e-07,
        ymax=3,
        yminorticks=true,
        ylabel style={font=\color{white!15!black}},
        ylabel={\large max norm error},
        title style={align=center, font=\bfseries},
        title={\small L-shaped Domain, $d = 2, p = 2, u_{2}^{(2d)}$},
        xmajorgrids,
        xminorgrids,
        ymajorgrids,
        yminorgrids,
        legend style={at={(0.97,0.03)}, nodes={scale=0.5, transform shape}, anchor=south east, legend cell align=left, align=left, draw=white!15!black}
        ]
        \addplot [color=red, line width=1pt] table[row sep=crcr]{%
        0.198983854587716   0.649436010556696\\
        0.0998705812472178  0.0682869052488434\\
        0.0666408526555842  0.0473663797496817\\
        0.0499910849999656  0.0319806783240204\\
        0.0399982806717673  0.0209940431558433\\
        0.0333321068124764  0.0156597376139835\\
        0.0285696201019069  0.00846570887734899\\
        0.0249919930274933  0.00667364426817407\\
        0.0222120342891224  0.00433172416923178\\
        0.0199999963261506  0.0035735588042165\\
        0.0181812956477009  0.00319530812413049\\
        0.0166651641450463  0.00455037895060606\\
        0.015384328963273   0.00189464784853666\\
        0.0142856175244916  0.00244538206496614\\
        0.0133326387907105  0.0016630377082536\\
        0.0124986840340357  0.0013769699416688\\
        0.0117645043173365  0.00158811551444504\\
        0.0111102401232889  0.00130023650643807\\
        0.0105263066128438  0.000948371752416577\\
        0.0099995353399169  0.00104528230539014\\
        0.00666657614059933 0.000489446347994082\\
        0.00499998131875709 0.00019515287945282\\
        0.00399999800442724 0.000130551152868641\\
        0.00333331174692903 0.00013927261106339\\
        0.0028571421843815  8.97863994966386e-05\\
        0.00249999897088586 5.43830684254232e-05\\
        0.00222222129568555 3.29838898762702e-05\\
        0.00199999219026464 4.74591825474491e-05\\
        0.00181818033591482 2.97219713232266e-05\\
        0.00166666587024734 2.2614039048463e-05\\
        0.00153845446252526 2.03532863558564e-05\\
        0.00142856662480272 1.62330839073022e-05\\
        0.00133333308002394 1.68530906492492e-05\\
        };
        \addlegendentry{$\varrho = 0.2500$, dense blocks} 

        \addplot [color=blue, line width=1pt] table[row sep=crcr]{%
        0.198983854587716   0.520476146891479\\
        0.0998705812472178  0.0643562778795629\\
        0.0666408526555842  0.0472691709869002\\
        0.0499910849999656  0.0304765899387307\\
        0.0399982806717673  0.0197954143313162\\
        0.0333321068124764  0.0158658915993826\\
        0.0285696201019069  0.00766417193028146\\
        0.0249919930274933  0.00433496798810928\\
        0.0222120342891224  0.00429194918246578\\
        0.0199999963261506  0.00448354257016526\\
        0.0181812956477009  0.00273029828935534\\
        0.0166651641450463  0.00410297438475737\\
        0.015384328963273   0.00218295407512326\\
        0.0142856175244916  0.00178510276843724\\
        0.0133326387907105  0.00165292220083813\\
        0.0124986840340357  0.00160369091649315\\
        0.0117645043173365  0.00127905375966897\\
        0.0111102401232889  0.00122226496358924\\
        0.0105263066128438  0.000895361992405697\\
        0.0099995353399169  0.00102324006767796\\
        0.00666657614059933 0.000549635791609049\\
        0.00499998131875709 0.000221346617940377\\
        0.00399999800442724 0.000141178834348832\\
        0.00333331174692903 0.000132239452313776\\
        0.0028571421843815  6.72997121640151e-05\\
        0.00249999897088586 6.83032218251611e-05\\
        0.00222222129568555 4.35282296287198e-05\\
        0.00199999219026464 5.05639092240884e-05\\
        0.00181818033591482 3.63781812504627e-05\\
        0.00166666587024734 3.09724153986224e-05\\
        0.00153845446252526 2.30210340852466e-05\\
        0.00142856662480272 2.17731850975511e-05\\
        0.00133333308002394 2.01943304869445e-05\\
        };
        \addlegendentry{$\varrho = 0.2500$, medium blocks} 

        \addplot [color=green, line width=1pt] table[row sep=crcr]{%
        0.198983854587716   0.411330973599294\\
        0.0998705812472178  0.0610591244748697\\
        0.0666408526555842  0.0514939167082771\\
        0.0499910849999656  0.0301694028471722\\
        0.0399982806717673  0.0202876252959108\\
        0.0333321068124764  0.0181889789887979\\
        0.0285696201019069  0.00721422222733636\\
        0.0249919930274933  0.00626631666432464\\
        0.0222120342891224  0.00488098970047091\\
        0.0199999963261506  0.00509434805492859\\
        0.0181812956477009  0.00389049643502659\\
        0.0166651641450463  0.00403194068534596\\
        0.015384328963273   0.00265702590593886\\
        0.0142856175244916  0.0023575584807487\\
        0.0133326387907105  0.00155373048301932\\
        0.0124986840340357  0.00124274063292873\\
        0.0117645043173365  0.00160047678368347\\
        0.0111102401232889  0.00163860791965043\\
        0.0105263066128438  0.000905037588947444\\
        0.0099995353399169  0.00121321809194086\\
        0.00666657614059933 0.000541153183694298\\
        0.00499998131875709 0.00025712138847922\\
        0.00399999800442724 0.000173230596706819\\
        0.00333331174692903 0.000166058370733246\\
        0.0028571421843815  9.39688649337356e-05\\
        0.00249999897088586 7.52663756138361e-05\\
        0.00222222129568555 4.63114784459151e-05\\
        0.00199999219026464 4.94303927656148e-05\\
        0.00181818033591482 5.08860376076115e-05\\
        0.00166666587024734 2.9823422906361e-05\\
        0.00153845446252526 1.98013282709519e-05\\
        0.00142856662480272 2.22224725456854e-05\\
        0.00133333308002394 2.08344444549979e-05\\
        };
        \addlegendentry{$\varrho = 0.2500$, loose blocks} 

        \addplot [color=orange, line width=1pt] table[row sep=crcr]{%
        0.198983854587716   0.669146746816984\\
        0.0998705812472178  0.139638097185541\\
        0.0666408526555842  0.0891829482888782\\
        0.0499910849999656  0.0433307665093789\\
        0.0399982806717673  0.0226400384778547\\
        0.0333321068124764  0.0224289570217051\\
        0.0285696201019069  0.0135517563885069\\
        0.0249919930274933  0.00971017114854156\\
        0.0222120342891224  0.00712947200705893\\
        0.0199999963261506  0.00736696685908456\\
        0.0181812956477009  0.00615510592919932\\
        0.0166651641450463  0.00407846007365953\\
        0.015384328963273   0.00313566690303579\\
        0.0142856175244916  0.00367966506942885\\
        0.0133326387907105  0.00386627919219507\\
        0.0124986840340357  0.00200736509844113\\
        0.0117645043173365  0.00265188127398108\\
        0.0111102401232889  0.00189479535763448\\
        0.0105263066128438  0.00177081789693823\\
        0.0099995353399169  0.00181335060776178\\
        0.00666657614059933 0.000716790826299274\\
        0.00499998131875709 0.00056560242747139\\
        0.00399999800442724 0.000247762136137197\\
        0.00333331174692903 0.000201178147642977\\
        0.0028571421843815  0.000130985941908079\\
        0.00249999897088586 8.04863442850356e-05\\
        0.00222222129568555 6.73573502316671e-05\\
        0.00199999219026464 8.23829619225336e-05\\
        0.00181818033591482 4.25429555317436e-05\\
        0.00166666587024734 4.12899994763904e-05\\
        0.00153845446252526 3.32016289696924e-05\\
        0.00142856662480272 2.88241191865879e-05\\
        0.00133333308002394 2.78063156917696e-05\\
        };
        \addlegendentry{$\varrho = 0.0250$, two parts} 

        \addplot [color=black, forget plot]
          table[row sep=crcr]{%
        0.3     3\\
        3e-05   3e-04\\
        };
        \node[right] at (3e-04, 3e-03) {\tiny 1st order};
        \addplot [color=black, forget plot]
          table[row sep=crcr]{%
        0.3     3\\
        3e-05   3e-08\\
        };
        \node[above right] at (3e-04, 3e-06) {\tiny 2nd order};
        \addplot [color=black, forget plot]
          table[row sep=crcr]{%
        0.3     3\\
        3e-05   3e-12\\
        };
        \node[above right] at (1.7e-03, 3e-07) {\tiny 3rd order};
        \end{axis}

        \end{tikzpicture}
        }
        \phantomsubcaption
    \end{subfigure}
    \caption{2d tests on the L-shaped domain with discontinuous coefficient matrices}
    \label{fig:2d_discontinuous_test_2}
\end{figure}

\subsection{3d numerical tests}
\label{sec:num3d}
3d numerical tests are performed over the unit sphere given by $\{ x_{1}^{2} + x_{2}^{2} + x_{3}^{2} < 1 \}$ and a 3d L-shaped domain given by $(-1, 1)^{3} \backslash [0, 1] \times [-1, 1] \times [0, 1]$.
The nonlocal kernel function is chosen to be the same one as in the 2d case. 
We again test our algorithm for smooth manufactured solutions and $p=2$. 

\subsubsection{Tests for continuous coefficient matrices}
For the test on continuous coefficient matrices, we use the following list of coefficient matrices.
Notice again that our baseline case is $A_{0}(\boldsymbol{x}) = I$ with $\varrho = 1$.
\begin{equation*}
\resizebox{\textwidth}{!}{$\displaystyle
\begin{matrix}
\# & A(\boldsymbol{x}) & \varrho\\
1 &
\begin{pmatrix}
1 - 0.5 |x_{1}| & 0 & 0\\
0 & 0.5 - 0.25 |x_{2}| & 0\\
0 & 0 & 0.25 + 0.25 |x_{3}|
\end{pmatrix}
& 0.2500\\[1.5em]
2 & \displaystyle \frac{1}{2.21}
\begin{pmatrix}
2 - |x_{1}| & 0 & 0.5\\
0 & 0.5 + 0.5 |x_{2}| & 0\\
0.5 & 0 & 1 - 0.5 |x_{3}|
\end{pmatrix}
& 0.0864\\[1.5em]
3 &
\begin{pmatrix}
1 - 0.5 |x_{1}| & 0 & 0\\
0 & 0.05 - 0.025 |x_{2}| & 0\\
0 & 0 & 0.025 + 0.025 |x_{3}|
\end{pmatrix}
& 0.0250\\[1.5em]
4 &
\begin{pmatrix}
1 - 0.5 |x_{1}| & 0 & 0\\
0 & 0.005 - 0.0025 |x_{2}| & 0\\
0 & 0 & 0.0025 + 0.0025 |x_{3}|
\end{pmatrix}
& 0.0025\\[1.5em]
5 & \displaystyle \frac{1}{2.001}
\begin{pmatrix}
2 - |x_{1} (0.5 - x_{2})| & -0.02 & 0.005\\
-0.02 & 0.005 + 0.005 |x_{1} + x_{3}| & -0.001\\
0.005 & -0.001 & 0.01 + 0.0025 x_{2} \exp(x_{3})
\end{pmatrix}
& 0.0014
\end{matrix}
$}
\end{equation*}
Here the value $\varrho$ is computed approximately in the domain $[-1, 1]^{3}$. 
Numerical results are presented in \cref{fig:3d_continuous_test_1,fig:3d_continuous_test_2} for the two manufactured solutions $u_{1}^{(3d)}(x_{1}, x_{2}, x_{3}) = x_{1} x_{2} + x_{1} x_{3} + x_{2} x_{3} + \cos(x_{1}) \exp(x_{2} + x_{3})$ and 
$u_{2}^{(3d)}(x_{1}, x_{2}, x_{3}) = (x_{1} + x_{2} + x_{3})^{4} \cos(x_{1} (x_{1} + 2 x_{2} + 2 x_{3}))$. 
We observe similar second-order convergence in $h$ for all cases and numerical errors grow as $\varrho$ becomes smaller. 

\begin{figure}[htp]
    \centering
    \begin{subfigure}[b]{0.490\textwidth}
        \centering
        \resizebox{\textwidth}{!}{
        \begin{tikzpicture}
        \begin{axis}[%
        width=2.55in,
        height=1.5in,
        scale only axis,
        xmode=log,
        xmin=6e-03,
        xmax=0.3,
        xminorticks=true,
        xlabel style={font=\color{white!15!black}},
        xlabel={\large fill distance},
        ymode=log,
        ymin=3e-06,
        ymax=0.07,
        yminorticks=true,
        ylabel style={font=\color{white!15!black}},
        ylabel={\large max norm error},
        title style={align=center, font=\bfseries},
        title={\small Unit Sphere Domain, $d = 3, p = 2, u_{1}^{(3d)}$},
        xmajorgrids,
        xminorgrids,
        ymajorgrids,
        yminorgrids,
        legend style={at={(0.97,0.03)}, nodes={scale=0.5, transform shape}, anchor=south east, legend cell align=left, align=left, draw=white!15!black}
        ]
        \addplot [color=red, line width=1pt] table[row sep=crcr]{%
        0.199976856129712   0.00416455386007053\\
        0.0999950754752416  0.000937482946753665\\
        0.0666664718175454  0.000294962079829375\\
        0.0499997004247293  0.000183030316736055\\
        0.0399997888105867  9.286768553185e-05\\
        0.0333331617388861  5.40008043206086e-05\\
        0.0285714246047077  4.39360273452039e-05\\
        0.0249999978650042  2.54960928900427e-05\\
        0.0222222025230041  2.09073585795139e-05\\
        0.0199999972740879  2.08611227630051e-05\\
        0.0181818117125527  1.29118635552317e-05\\
        0.0166666656465241  1.02738272089731e-05\\
        0.0153846118094882  9.1360106102556e-06\\
        0.0142857138581073  8.8820132950751e-06\\
        };
        \addlegendentry{$\varrho = 1.0000$} 

        \addplot [color=blue, line width=1pt] table[row sep=crcr]{%
        0.199976856129712   0.00383598148522646\\
        0.0999950754752416  0.00131740876782382\\
        0.0666664718175454  0.000442510327001155\\
        0.0499997004247293  0.000178968790396361\\
        0.0399997888105867  9.60660641875499e-05\\
        0.0333331617388861  6.85337279637821e-05\\
        0.0285714246047077  4.92385413046748e-05\\
        0.0249999978650042  3.72705512421412e-05\\
        0.0222222025230041  2.27032198027999e-05\\
        0.0199999972740879  2.3794612243222e-05\\
        0.0181818117125527  1.40449256043063e-05\\
        0.0166666656465241  1.28687937936967e-05\\
        0.0153846118094882  9.38559551721951e-06\\
        0.0142857138581073  7.16551617729877e-06\\
        };
        \addlegendentry{$\varrho = 0.2500$} 

        \addplot [color=green, line width=1pt] table[row sep=crcr]{%
        0.199976856129712   0.00465615162834898\\
        0.0999950754752416  0.0013871201886948\\
        0.0666664718175454  0.000343250497588432\\
        0.0499997004247293  0.000211856363594443\\
        0.0399997888105867  0.000103164042477566\\
        0.0333331617388861  8.54438221136178e-05\\
        0.0285714246047077  4.75723783570636e-05\\
        0.0249999978650042  3.4729777696807e-05\\
        0.0222222025230041  3.02212134624114e-05\\
        0.0199999972740879  2.12459482433758e-05\\
        0.0181818117125527  1.70374290453523e-05\\
        0.0166666656465241  1.36625229743004e-05\\
        0.0153846118094882  1.1914839481264e-05\\
        0.0142857138581073  9.68400126577507e-06\\
        };
        \addlegendentry{$\varrho = 0.0864$} 

        \addplot [color=lime, line width=1pt] table[row sep=crcr]{%
        0.199976856129712   0.0188406620946866\\
        0.0999950754752416  0.00395334923667923\\
        0.0666664718175454  0.00188034390631309\\
        0.0499997004247293  0.000873635954097729\\
        0.0399997888105867  0.000601395017238104\\
        0.0333331617388861  0.00037472881574474\\
        0.0285714246047077  0.000304856317777435\\
        0.0249999978650042  0.000227264034718289\\
        0.0222222025230041  0.000163352977101461\\
        0.0199999972740879  0.000129477711896531\\
        0.0181818117125527  0.000113894967293415\\
        0.0166666656465241  8.51622583892819e-05\\
        0.0153846118094882  7.22640924819551e-05\\
        0.0142857138581073  6.32267617919346e-05\\
        };
        \addlegendentry{$\varrho = 0.0250$} 

        \addplot [color=orange, line width=1pt] table[row sep=crcr]{%
        0.199976856129712   0.0548740389734579\\
        0.0999950754752416  0.0191488780238172\\
        0.0666664718175454  0.0101067087695206\\
        0.0499997004247293  0.00552641680629362\\
        0.0399997888105867  0.0035522572556701\\
        0.0333331617388861  0.00235456089235386\\
        0.0285714246047077  0.00194174983345707\\
        0.0249999978650042  0.00139932979676782\\
        0.0222222025230041  0.000964564064786178\\
        0.0199999972740879  0.000835348077606657\\
        0.0181818117125527  0.00075079893877783\\
        0.0166666656465241  0.000618925405776682\\
        0.0153846118094882  0.000516723369992977\\
        0.0142857138581073  0.000407212299864845\\
        };
        \addlegendentry{$\varrho = 0.0025$} 

        \addplot [color=cyan, line width=1pt, opacity=0.6] table[row sep=crcr]{%
        0.199976856129712   0.0555242075224274\\
        0.0999950754752416  0.0194900760839034\\
        0.0666664718175454  0.0104674630318757\\
        0.0499997004247293  0.00570580904670503\\
        0.0399997888105867  0.00355255239797314\\
        0.0333331617388861  0.00231972657092294\\
        0.0285714246047077  0.00168375661411968\\
        0.0249999978650042  0.00130258587234833\\
        0.0222222025230041  0.000869979071017379\\
        0.0199999972740879  0.000807257461822086\\
        0.0181818117125527  0.000692308339573344\\
        0.0166666656465241  0.000556028674934872\\
        0.0153846118094882  0.00048267735816987\\
        0.0142857138581073  0.000411243563122721\\
        };
        \addlegendentry{$\varrho = 0.0014$} 

        \addplot [color=black, forget plot]
          table[row sep=crcr]{%
        0.3   0.07\\
        3e-05  7e-06\\
        };
        \node[right] at (6e-03, 1.35e-03) {\tiny 1st order};
        \addplot [color=black, forget plot]
          table[row sep=crcr]{%
        0.3   0.07\\
        3e-05  7e-10\\
        };
        \node[above right] at (6e-03, 2.6e-05) {\tiny 2nd order};
        \addplot [color=black, forget plot]
          table[row sep=crcr]{%
        0.3   0.07\\
        3e-05  7e-14\\
        };
        \node[above right] at (1.23e-02, 3e-06) {\tiny 3rd order};
        \end{axis}

        \end{tikzpicture}
        }
        \phantomsubcaption
    \end{subfigure}
    \hfill
    \begin{subfigure}[b]{0.490\textwidth}
        \centering
        \resizebox{\textwidth}{!}{
        \begin{tikzpicture}
        \begin{axis}[%
        width=2.55in,
        height=1.5in,
        scale only axis,
        xmode=log,
        xmin=6e-03,
        xmax=0.3,
        xminorticks=true,
        xlabel style={font=\color{white!15!black}},
        xlabel={\large fill distance},
        ymode=log,
        ymin=7e-04,
        ymax=7,
        yminorticks=true,
        ylabel style={font=\color{white!15!black}},
        ylabel={\large max norm error},
        title style={align=center, font=\bfseries},
        title={\small Unit Sphere Domain, $d = 3, p = 2, u_{2}^{(3d)}$},
        xmajorgrids,
        xminorgrids,
        ymajorgrids,
        yminorgrids,
        legend style={at={(0.97,0.03)}, nodes={scale=0.5, transform shape}, anchor=south east, legend cell align=left, align=left, draw=white!15!black}
        ]
        \addplot [color=red, line width=1pt] table[row sep=crcr]{%
        0.199976856129712   0.22827743300953\\
        0.0999950754752416  0.101706943091798\\
        0.0666664718175454  0.0402333522512002\\
        0.0499997004247293  0.0208829874811005\\
        0.0399997888105867  0.0107359933474023\\
        0.0333331617388861  0.00762793523854977\\
        0.0285714246047077  0.00639744086838157\\
        0.0249999978650042  0.00475664810428733\\
        0.0222222025230041  0.00354889254098056\\
        0.0199999972740879  0.00307404944219125\\
        0.0181818117125527  0.00244664888800106\\
        0.0166666656465241  0.00192072546513877\\
        0.0153846118094882  0.00170075548583659\\
        0.0142857138581073  0.00144651361470638\\
        };
        \addlegendentry{$\varrho = 1.0000$} 

        \addplot [color=blue, line width=1pt] table[row sep=crcr]{%
        0.199976856129712   0.383407405708036\\
        0.0999950754752416  0.108731197243352\\
        0.0666664718175454  0.0476702645861631\\
        0.0499997004247293  0.0255233535934227\\
        0.0399997888105867  0.016206936065065\\
        0.0333331617388861  0.0115905628214916\\
        0.0285714246047077  0.00944449902411093\\
        0.0249999978650042  0.00690288184223697\\
        0.0222222025230041  0.0046926924111208\\
        0.0199999972740879  0.00398394369296384\\
        0.0181818117125527  0.00326093154355345\\
        0.0166666656465241  0.00275182131219776\\
        0.0153846118094882  0.00219742551483826\\
        0.0142857138581073  0.00194161456011011\\
        };
        \addlegendentry{$\varrho = 0.2500$} 

        \addplot [color=green, line width=1pt] table[row sep=crcr]{%
        0.199976856129712   0.637720024533205\\
        0.0999950754752416  0.175464518100957\\
        0.0666664718175454  0.0681874283354107\\
        0.0499997004247293  0.0414899467489909\\
        0.0399997888105867  0.0223528151846852\\
        0.0333331617388861  0.017671743795135\\
        0.0285714246047077  0.0144072129667965\\
        0.0249999978650042  0.011060139640251\\
        0.0222222025230041  0.00825091715666604\\
        0.0199999972740879  0.00580595309867937\\
        0.0181818117125527  0.00541657614865976\\
        0.0166666656465241  0.00447337667392089\\
        0.0153846118094882  0.00374781777728472\\
        0.0142857138581073  0.00314296279807458\\
        };
        \addlegendentry{$\varrho = 0.0864$} 

        \addplot [color=lime, line width=1pt] table[row sep=crcr]{%
        0.199976856129712   2.25585089459398\\
        0.0999950754752416  0.738161763029014\\
        0.0666664718175454  0.376322028227243\\
        0.0499997004247293  0.171941817895194\\
        0.0399997888105867  0.0933698239410981\\
        0.0333331617388861  0.0724642114337475\\
        0.0285714246047077  0.0490933375030105\\
        0.0249999978650042  0.0360496181383123\\
        0.0222222025230041  0.0251495572621505\\
        0.0199999972740879  0.0198703321305129\\
        0.0181818117125527  0.0177922999497997\\
        0.0166666656465241  0.0153723197151945\\
        0.0153846118094882  0.0125432946304374\\
        0.0142857138581073  0.0102328840981438\\
        };
        \addlegendentry{$\varrho = 0.0250$} 

        \addplot [color=orange, line width=1pt] table[row sep=crcr]{%
        0.199976856129712   5.98559548435468\\
        0.0999950754752416  2.90403282710471\\
        0.0666664718175454  1.84849038534435\\
        0.0499997004247293  1.13210491370226\\
        0.0399997888105867  0.69959873793514\\
        0.0333331617388861  0.547362956187548\\
        0.0285714246047077  0.349203918458873\\
        0.0249999978650042  0.25384612664621\\
        0.0222222025230041  0.200797906938894\\
        0.0199999972740879  0.17614930321054\\
        0.0181818117125527  0.122472103807605\\
        0.0166666656465241  0.100794337639694\\
        0.0153846118094882  0.0940670445598086\\
        0.0142857138581073  0.0772931984383218\\
        };
        \addlegendentry{$\varrho = 0.0025$} 

        \addplot [color=cyan, line width=1pt, opacity=0.6] table[row sep=crcr]{%
        0.199976856129712   5.24290377170214\\
        0.0999950754752416  2.79270408266146\\
        0.0666664718175454  1.54059440813433\\
        0.0499997004247293  1.01158429724165\\
        0.0399997888105867  0.60669488489846\\
        0.0333331617388861  0.453092334005545\\
        0.0285714246047077  0.269212494990374\\
        0.0249999978650042  0.252367117903295\\
        0.0222222025230041  0.163348878885572\\
        0.0199999972740879  0.138752689072966\\
        0.0181818117125527  0.105478937724394\\
        0.0166666656465241  0.0870057777415139\\
        0.0153846118094882  0.0703936608271492\\
        0.0142857138581073  0.06187395034208\\
        };
        \addlegendentry{$\varrho = 0.0014$} 

        \addplot [color=black, forget plot]
          table[row sep=crcr]{%
        0.3   7\\
        3e-05  7e-04\\
        };
        \node[right] at (6e-03, 1.35e-01) {\tiny 1st order};
        \addplot [color=black, forget plot]
          table[row sep=crcr]{%
        0.3   7\\
        3e-05  7e-08\\
        };
        \node[above right] at (6e-03, 2.6e-03) {\tiny 2nd order};
        \addplot [color=black, forget plot]
          table[row sep=crcr]{%
        0.3   7\\
        3e-05  7e-12\\
        };
        \node[above right] at (1.6e-02, 7e-04) {\tiny 3rd order};
        \end{axis}

        \end{tikzpicture}
        }
        \phantomsubcaption
    \end{subfigure}
    \caption{3d tests on the unit sphere domain with continuous coefficient matrices}
    \label{fig:3d_continuous_test_1}
\end{figure}
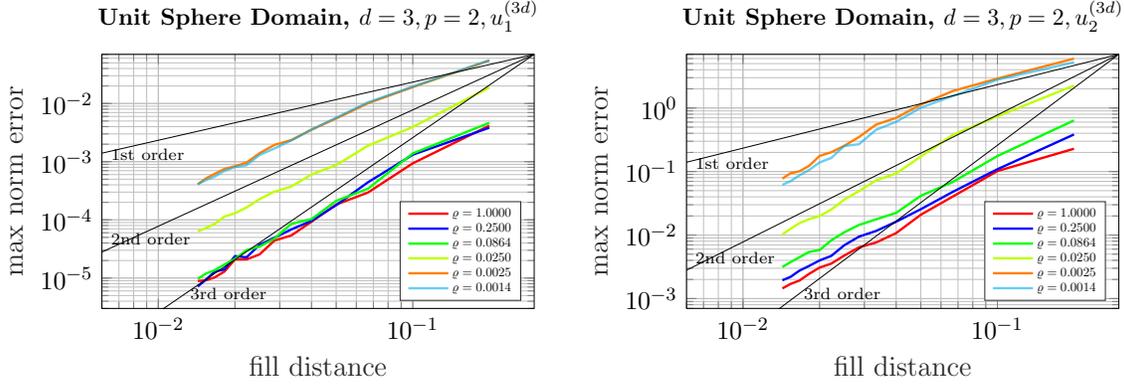

\begin{figure}[htp]
    \centering
    \begin{subfigure}[b]{0.490\textwidth}
        \centering
        \resizebox{\textwidth}{!}{
        \begin{tikzpicture}
        \begin{axis}[%
        width=2.55in,
        height=1.5in,
        scale only axis,
        xmode=log,
        xmin=6e-03,
        xmax=0.3,
        xminorticks=true,
        xlabel style={font=\color{white!15!black}},
        xlabel={\large fill distance},
        ymode=log,
        ymin=3e-06,
        ymax=0.07,
        yminorticks=true,
        ylabel style={font=\color{white!15!black}},
        ylabel={\large max norm error},
        title style={align=center, font=\bfseries},
        title={\small 3d L-shaped Domain, $d = 3, p = 2, u_{1}^{(3d)}$},
        xmajorgrids,
        xminorgrids,
        ymajorgrids,
        yminorgrids,
        legend style={at={(0.97,0.03)}, nodes={scale=0.5, transform shape}, anchor=south east, legend cell align=left, align=left, draw=white!15!black}
        ]
        \addplot [color=red, line width=1pt] table[row sep=crcr]{%
        0.199976856129712   0.00373902353121913\\
        0.0999950754752418  0.00112114313827805\\
        0.0666664718175454  0.000451881860073833\\
        0.0499997004247293  0.000235036092597785\\
        0.0399997888105867  0.000129812492884351\\
        0.0333331617388861  8.14289335187013e-05\\
        0.0285714246047077  6.26118421420685e-05\\
        0.0249999978650042  4.68789086340848e-05\\
        0.0222222025230041  3.25944955852364e-05\\
        0.0199999972740879  2.27849855147788e-05\\
        0.0181818117125527  1.75466146412617e-05\\
        0.0166666656465241  2.23956704337169e-05\\
        0.0153846118094882  1.24101457701364e-05\\
        0.0142857138581073  1.01794157272295e-05\\
        };
        \addlegendentry{$\varrho = 1.0000$} 

        \addplot [color=blue, line width=1pt] table[row sep=crcr]{%
        0.199976856129712   0.007742900847167\\
        0.0999950754752418  0.00121111755248826\\
        0.0666664718175454  0.000502969342621284\\
        0.0499997004247293  0.000259640733479394\\
        0.0399997888105867  0.000113131253157484\\
        0.0333331617388861  0.000107498847522258\\
        0.0285714246047077  7.07245208610985e-05\\
        0.0249999978650042  4.85707983539641e-05\\
        0.0222222025230041  3.40414005179923e-05\\
        0.0199999972740879  2.28740681373907e-05\\
        0.0181818117125527  2.18748905282951e-05\\
        0.0166666656465241  1.41536750923876e-05\\
        0.0153846118094882  1.27698953171773e-05\\
        0.0142857138581073  9.60323676224561e-06\\
        };
        \addlegendentry{$\varrho = 0.2500$} 

        \addplot [color=green, line width=1pt] table[row sep=crcr]{%
        0.199976856129712   0.00439180625325619\\
        0.0999950754752418  0.00128744609728981\\
        0.0666664718175454  0.000568691823754186\\
        0.0499997004247293  0.000246446396875211\\
        0.0399997888105867  0.000175268889572422\\
        0.0333331617388861  0.000106950010418938\\
        0.0285714246047077  7.64126414924959e-05\\
        0.0249999978650042  5.72257046460578e-05\\
        0.0222222025230041  3.78777092633342e-05\\
        0.0199999972740879  2.88410017361684e-05\\
        0.0181818117125527  2.99175907434446e-05\\
        0.0166666656465241  2.0411076833593e-05\\
        0.0153846118094882  1.70594401049939e-05\\
        0.0142857138581073  1.51907954260011e-05\\
        };
        \addlegendentry{$\varrho = 0.0864$} 

        \addplot [color=lime, line width=1pt] table[row sep=crcr]{%
        0.199976856129712   0.0180778867651248\\
        0.0999950754752418  0.00455446895814293\\
        0.0666664718175454  0.00237327632855333\\
        0.0499997004247293  0.00119250486441214\\
        0.0399997888105867  0.000502091340500499\\
        0.0333331617388861  0.000361858739250431\\
        0.0285714246047077  0.000340266823122271\\
        0.0249999978650042  0.000239536444433419\\
        0.0222222025230041  0.000179551478761475\\
        0.0199999972740879  0.000149802043209135\\
        0.0181818117125527  0.000111453370744696\\
        0.0166666656465241  0.00011700675385562\\
        0.0153846118094882  7.7386375044064e-05\\
        0.0142857138581073  6.4798537594335e-05\\
        };
        \addlegendentry{$\varrho = 0.0250$} 

        \addplot [color=orange, line width=1pt] table[row sep=crcr]{%
        0.199976856129712   0.052649940028336\\
        0.0999950754752418  0.0330809157276368\\
        0.0666664718175454  0.0132862563960927\\
        0.0499997004247293  0.00899182240904506\\
        0.0399997888105867  0.00652464545312714\\
        0.0333331617388861  0.00405318711223579\\
        0.0285714246047077  0.00254091626170716\\
        0.0249999978650042  0.00183908188973803\\
        0.0222222025230041  0.0013209330721955\\
        0.0199999972740879  0.0011699987980327\\
        0.0181818117125527  0.00101302552970495\\
        0.0166666656465241  0.000911481230040145\\
        0.0153846118094882  0.000771449991746831\\
        0.0142857138581073  0.000600031449728977\\
        };
        \addlegendentry{$\varrho = 0.0025$} 

        \addplot [color=cyan, line width=1pt, opacity=0.6] table[row sep=crcr]{%
        0.199976856129712   0.0415529238231027\\
        0.0999950754752418  0.0300560441091147\\
        0.0666664718175454  0.0127212298373816\\
        0.0499997004247293  0.00804172205952725\\
        0.0399997888105867  0.00642207664682548\\
        0.0333331617388861  0.00357585522077297\\
        0.0285714246047077  0.00201959695286336\\
        0.0249999978650042  0.00142441718850428\\
        0.0222222025230041  0.00135645961453701\\
        0.0199999972740879  0.000938890927238312\\
        0.0181818117125527  0.000857570289481302\\
        0.0166666656465241  0.000692143742142104\\
        0.0153846118094882  0.000629169728049828\\
        0.0142857138581073  0.000474812038244288\\
        };
        \addlegendentry{$\varrho = 0.0014$} 

        \addplot [color=black, forget plot]
          table[row sep=crcr]{%
        0.3   0.07\\
        3e-05  7e-06\\
        };
        \node[right] at (6e-03, 1.35e-03) {\tiny 1st order};
        \addplot [color=black, forget plot]
          table[row sep=crcr]{%
        0.3   0.07\\
        3e-05  7e-10\\
        };
        \node[above right] at (6e-03, 2.6e-05) {\tiny 2nd order};
        \addplot [color=black, forget plot]
          table[row sep=crcr]{%
        0.3   0.07\\
        3e-05  7e-14\\
        };
        \node[above right] at (1.23e-02, 3e-06) {\tiny 3rd order};
        \end{axis}

        \end{tikzpicture}
        }
        \phantomsubcaption
    \end{subfigure}
    \hfill
    \begin{subfigure}[b]{0.490\textwidth}
        \centering
        \resizebox{\textwidth}{!}{
        \begin{tikzpicture}
        \begin{axis}[%
        width=2.55in,
        height=1.5in,
        scale only axis,
        xmode=log,
        xmin=6e-03,
        xmax=0.3,
        xminorticks=true,
        xlabel style={font=\color{white!15!black}},
        xlabel={\large fill distance},
        ymode=log,
        ymin=3e-03,
        ymax=70,
        yminorticks=true,
        ylabel style={font=\color{white!15!black}},
        ylabel={\large max norm error},
        title style={align=center, font=\bfseries},
        title={\small 3d L-shaped Domain, $d = 3, p = 2, u_{2}^{(3d)}$},
        xmajorgrids,
        xminorgrids,
        ymajorgrids,
        yminorgrids,
        legend style={at={(0.97,0.03)}, nodes={scale=0.5, transform shape}, anchor=south east, legend cell align=left, align=left, draw=white!15!black}
        ]
        \addplot [color=red, line width=1pt] table[row sep=crcr]{%
        0.199976856129712   1.52885379131499\\
        0.0999950754752418  0.463019844893321\\
        0.0666664718175454  0.180024723895549\\
        0.0499997004247293  0.0977585720678391\\
        0.0399997888105867  0.059668105483869\\
        0.0333331617388861  0.0465563493191112\\
        0.0285714246047077  0.0397680724122402\\
        0.0249999978650042  0.0251999524816959\\
        0.0222222025230041  0.0219145246152088\\
        0.0199999972740879  0.0154404100273773\\
        0.0181818117125527  0.0134350568354922\\
        0.0166666656465241  0.0115916967710703\\
        0.0153846118094882  0.00997051102446633\\
        0.0142857138581073  0.0085327290215389\\
        };
        \addlegendentry{$\varrho = 1.0000$} 

        \addplot [color=blue, line width=1pt] table[row sep=crcr]{%
        0.199976856129712   2.05571516860588\\
        0.0999950754752418  0.434703527154692\\
        0.0666664718175454  0.291380383746624\\
        0.0499997004247293  0.154698459159347\\
        0.0399997888105867  0.0998507878954555\\
        0.0333331617388861  0.0773174405740171\\
        0.0285714246047077  0.0541201962746918\\
        0.0249999978650042  0.0355053492802622\\
        0.0222222025230041  0.03138299295642\\
        0.0199999972740879  0.0218398971404383\\
        0.0181818117125527  0.020080480869952\\
        0.0166666656465241  0.0159026571940366\\
        0.0153846118094882  0.0137026996878351\\
        0.0142857138581073  0.0111778094513095\\
        };
        \addlegendentry{$\varrho = 0.2500$} 

        \addplot [color=green, line width=1pt] table[row sep=crcr]{%
        0.199976856129712   3.2300463648275\\
        0.0999950754752418  1.24022547262672\\
        0.0666664718175454  0.420450205293712\\
        0.0499997004247293  0.268554738466769\\
        0.0399997888105867  0.186148731467219\\
        0.0333331617388861  0.11583833239656\\
        0.0285714246047077  0.100009155213453\\
        0.0249999978650042  0.0739935083383934\\
        0.0222222025230041  0.0509520433435959\\
        0.0199999972740879  0.0406415837423211\\
        0.0181818117125527  0.0383364089083678\\
        0.0166666656465241  0.0316157915376714\\
        0.0153846118094882  0.0258026373095745\\
        0.0142857138581073  0.0225145046627091\\
        };
        \addlegendentry{$\varrho = 0.0864$} 

        \addplot [color=lime, line width=1pt] table[row sep=crcr]{%
        0.199976856129712   17.7262806571091\\
        0.0999950754752418  5.67669023251261\\
        0.0666664718175454  2.55298051123786\\
        0.0499997004247293  1.24387589773097\\
        0.0399997888105867  0.815585470769981\\
        0.0333331617388861  0.527383063460373\\
        0.0285714246047077  0.445927864691962\\
        0.0249999978650042  0.317149155371329\\
        0.0222222025230041  0.246496560473775\\
        0.0199999972740879  0.2129454457741\\
        0.0181818117125527  0.144094964448369\\
        0.0166666656465241  0.129127806389615\\
        0.0153846118094882  0.112960368309666\\
        0.0142857138581073  0.0931180885245091\\
        };
        \addlegendentry{$\varrho = 0.0250$} 

        \addplot [color=orange, line width=1pt] table[row sep=crcr]{%
        0.199976856129712   41.5273247342514\\
        0.0999950754752418  41.577213037705\\
        0.0666664718175454  33.3256070522929\\
        0.0499997004247293  12.887578330347\\
        0.0399997888105867  8.28822761261404\\
        0.0333331617388861  6.23634170930789\\
        0.0285714246047077  4.57959614411098\\
        0.0249999978650042  2.80770808874557\\
        0.0222222025230041  2.26991563767774\\
        0.0199999972740879  1.75202348765021\\
        0.0181818117125527  1.42807054158502\\
        0.0166666656465241  1.26648790917505\\
        0.0153846118094882  1.12116363882667\\
        0.0142857138581073  0.89196933413713\\
        };
        \addlegendentry{$\varrho = 0.0025$} 

        \addplot [color=cyan, line width=1pt, opacity=0.6] table[row sep=crcr]{%
        0.199976856129712   32.8884210219002\\
        0.0999950754752418  31.05929013917\\
        0.0666664718175454  19.553113190792\\
        0.0499997004247293  8.02143710744921\\
        0.0399997888105867  4.97142678814014\\
        0.0333331617388861  3.33718414194545\\
        0.0285714246047077  2.41302282824337\\
        0.0249999978650042  1.77278915306158\\
        0.0222222025230041  1.22963876067363\\
        0.0199999972740879  1.14690075617701\\
        0.0181818117125527  1.00557829467404\\
        0.0166666656465241  0.827879057898993\\
        0.0153846118094882  0.770857662717523\\
        0.0142857138581073  0.568956309917684\\
        };
        \addlegendentry{$\varrho = 0.0014$} 

        \addplot [color=black, forget plot]
          table[row sep=crcr]{%
        0.3   70\\
        3e-05  7e-03\\
        };
        \node[right] at (6e-03, 1.35e-00) {\tiny 1st order};
        \addplot [color=black, forget plot]
          table[row sep=crcr]{%
        0.3   70\\
        3e-05  7e-07\\
        };
        \node[above right] at (6e-03, 2.6e-02) {\tiny 2nd order};
        \addplot [color=black, forget plot]
          table[row sep=crcr]{%
        0.3   70\\
        3e-05  7e-11\\
        };
        \node[above right] at (1.23e-02, 3e-03) {\tiny 3rd order};
        \end{axis}

        \end{tikzpicture}
        }
        \phantomsubcaption
    \end{subfigure}
    \caption{3d tests on the 3d L-shaped domain with continuous coefficient matrices}
    \label{fig:3d_continuous_test_2}
\end{figure}

\subsubsection{Tests for discontinuous coefficient matrices}
We now show numerical results for discontinuous coefficient matrices. 
Again, we divide the computational domains into smaller blocks and define piecewise constant coefficient matrices with respect to the blocks.
More specifically, for $n \in \mathbb{N}$, we divide the domain $[-1, 1]^{3}$ into $(2 n + 1)^{3}$ blocks, and define the corresponding piecewise constant 
coefficient matrix 
\[
A_{\psi}(\boldsymbol{x}, n) := (B_{\psi}(\boldsymbol{x}, n) + B_{\psi}^{T}(\boldsymbol{x}, n) + 4 I) / 10,
\]
where $B_{\psi}(\boldsymbol{x}, n)$ is generated by \texttt{mt19937} with seed $\psi(\boldsymbol{x}, n) := \text{round}(x_{1} * n) * 2 + \text{round}(x_{2} * n) * 3 + \text{round}(x_{3} * n) * 5 \mod 2^{32}$.
A list of coefficient matrices used in our experiments is given below. 
\begin{equation*}
\begin{matrix}
\# & A(\boldsymbol{x}) & \varrho & \text{description}\\
6 & A_{\psi}(\boldsymbol{x}, 10^{10}) & 0.1847 & \text{dense blocks}\\
7 & A_{\psi}(\boldsymbol{x}, 10^{4}) & 0.1847 & \text{medium blocks}\\
8 & A_{\psi}(\boldsymbol{x}, 10^{0}) & 0.1847 & \text{loose blocks}
\end{matrix}
\end{equation*}
In addition, we have the last example matrix:
\begin{equation*}
A_{9}(\boldsymbol{x}) =
\left \{
\begin{aligned}
&A_{2}(\boldsymbol{x}), & x_{1} < 0\\
&A_{3}(\boldsymbol{x}), &\text{otherwise}\\
\end{aligned}
\right .
\qquad \text{with} \qquad
\varrho = 0.0250.
\end{equation*}
Numerical results are presented in \cref{fig:3d_discontinuous_test_1,fig:3d_discontinuous_test_2}.
We observe similar second-order convergence in $h$ for all cases. 

\begin{figure}[htp]
    \centering
    \begin{subfigure}[b]{0.490\textwidth}
        \centering
        \resizebox{\textwidth}{!}{
        \begin{tikzpicture}
        \begin{axis}[%
        width=2.55in,
        height=1.5in,
        scale only axis,
        xmode=log,
        xmin=6e-03,
        xmax=0.3,
        xminorticks=true,
        xlabel style={font=\color{white!15!black}},
        xlabel={\large fill distance},
        ymode=log,
        ymin=2e-06,
        ymax=0.02,
        yminorticks=true,
        ylabel style={font=\color{white!15!black}},
        ylabel={\large max norm error},
        title style={align=center, font=\bfseries},
        title={\small Unit Sphere Domain, $d = 3, p = 2, u_{1}^{(3d)}$},
        xmajorgrids,
        xminorgrids,
        ymajorgrids,
        yminorgrids,
        legend style={at={(0.03,0.97)}, nodes={scale=0.5, transform shape}, anchor=north west, legend cell align=left, align=left, draw=white!15!black}
        ]
        \addplot [color=red, line width=1pt] table[row sep=crcr]{%
        0.199976856129712   0.0048427092521699\\
        0.0999950754752416  0.00189236164146811\\
        0.0666664718175454  0.000371329753560801\\
        0.0499997004247293  0.000185319851477406\\
        0.0399997888105867  9.03846760178517e-05\\
        0.0333331617388861  9.38162515478069e-05\\
        0.0285714246047077  6.59189630050072e-05\\
        0.0249999978650042  4.68329675591406e-05\\
        0.0222222025230041  2.79531881330897e-05\\
        0.0199999972740879  2.55770238997144e-05\\
        0.0181818117125527  2.6148010286775e-05\\
        0.0166666656465241  1.6257025622135e-05\\
        0.0153846118094882  1.33187964075354e-05\\
        0.0142857138581073  9.80098114400363e-06\\
        };
        \addlegendentry{$\varrho = 0.1847$, dense blocks} 

        \addplot [color=blue, line width=1pt] table[row sep=crcr]{%
        0.199976856129712   0.00511775517064361\\
        0.0999950754752416  0.00103755900738278\\
        0.0666664718175454  0.000371463261691574\\
        0.0499997004247293  0.000204103647598153\\
        0.0399997888105867  9.45216840122498e-05\\
        0.0333331617388861  8.23372389073818e-05\\
        0.0285714246047077  5.60580863400162e-05\\
        0.0249999978650042  5.79983047517274e-05\\
        0.0222222025230041  3.03183953458408e-05\\
        0.0199999972740879  2.56208671669533e-05\\
        0.0181818117125527  2.37614597478064e-05\\
        0.0166666656465241  1.23255301720526e-05\\
        0.0153846118094882  1.35802481349145e-05\\
        0.0142857138581073  9.98850730615786e-06\\
        };
        \addlegendentry{$\varrho = 0.1847$, medium blocks} 

        \addplot [color=green, line width=1pt] table[row sep=crcr]{%
        0.199976856129712   0.00524226325708543\\
        0.0999950754752416  0.000938857630543133\\
        0.0666664718175454  0.000403381289623805\\
        0.0499997004247293  0.000174049858681524\\
        0.0399997888105867  8.54613855074682e-05\\
        0.0333331617388861  8.00378190555229e-05\\
        0.0285714246047077  5.62675837230131e-05\\
        0.0249999978650042  4.14540349407133e-05\\
        0.0222222025230041  2.85051500528155e-05\\
        0.0199999972740879  2.49416875868214e-05\\
        0.0181818117125527  1.39910981764579e-05\\
        0.0166666656465241  1.44719740431398e-05\\
        0.0153846118094882  1.27023047102881e-05\\
        0.0142857138581073  9.98558285258966e-06\\
        };
        \addlegendentry{$\varrho = 0.1847$, loose blocks} 

        \addplot [color=orange, line width=1pt] table[row sep=crcr]{%
        0.199976856129712   0.00972384797109149\\
        0.0999950754752416  0.00284950347370883\\
        0.0666664718175454  0.00138712345349701\\
        0.0499997004247293  0.00055194085982313\\
        0.0399997888105867  0.000364413630965732\\
        0.0333331617388861  0.000201061500207267\\
        0.0285714246047077  0.000198185441731891\\
        0.0249999978650042  0.000120566042727255\\
        0.0222222025230041  9.0357564797916e-05\\
        0.0199999972740879  8.43146355573587e-05\\
        0.0181818117125527  6.08794966954207e-05\\
        0.0166666656465241  5.53661511357717e-05\\
        0.0153846118094882  4.78805507491131e-05\\
        0.0142857138581073  3.80393924848121e-05\\
        };
        \addlegendentry{$\varrho = 0.0250$, two parts} 

        \addplot [color=black, forget plot]
          table[row sep=crcr]{%
        0.3   0.02\\
        3e-05  2e-06\\
        };
        \node[right] at (6e-03, 3.85e-04) {\tiny 1st order};
        \addplot [color=black, forget plot]
          table[row sep=crcr]{%
        0.3   0.02\\
        3e-05  2e-10\\
        };
        \node[above right] at (6e-03, 7.4e-06) {\tiny 2nd order};
        \addplot [color=black, forget plot]
          table[row sep=crcr]{%
        0.3   0.02\\
        3e-05  2e-14\\
        };
        \node[above right] at (1.6e-02, 2e-06) {\tiny 3rd order};
        \end{axis}

        \end{tikzpicture}
        }
        \phantomsubcaption
    \end{subfigure}
    \hfill
    \begin{subfigure}[b]{0.490\textwidth}
        \centering
        \resizebox{\textwidth}{!}{
        \begin{tikzpicture}
        \begin{axis}[%
        width=2.55in,
        height=1.5in,
        scale only axis,
        xmode=log,
        xmin=6e-03,
        xmax=0.3,
        xminorticks=true,
        xlabel style={font=\color{white!15!black}},
        xlabel={\large fill distance},
        ymode=log,
        ymin=7e-04,
        ymax=7,
        yminorticks=true,
        ylabel style={font=\color{white!15!black}},
        ylabel={\large max norm error},
        title style={align=center, font=\bfseries},
        title={\small Unit Sphere Domain, $d = 3, p = 2, u_{2}^{(3d)}$},
        xmajorgrids,
        xminorgrids,
        ymajorgrids,
        yminorgrids,
        legend style={at={(0.03,0.97)}, nodes={scale=0.5, transform shape}, anchor=north west, legend cell align=left, align=left, draw=white!15!black}
        ]
        \addplot [color=red, line width=1pt] table[row sep=crcr]{%
        0.199976856129712   0.469424245854305\\
        0.0999950754752416  0.198637251761131\\
        0.0666664718175454  0.0564724535758151\\
        0.0499997004247293  0.0323408272113053\\
        0.0399997888105867  0.0182268679181621\\
        0.0333331617388861  0.0129178890904957\\
        0.0285714246047077  0.0110199470202041\\
        0.0249999978650042  0.00785512065285054\\
        0.0222222025230041  0.00554933986775374\\
        0.0199999972740879  0.00483897404248523\\
        0.0181818117125527  0.00425782534463548\\
        0.0166666656465241  0.00336807825255092\\
        0.0153846118094882  0.00280921828524205\\
        0.0142857138581073  0.00249695987553755\\
        };
        \addlegendentry{$\varrho = 0.1847$, dense blocks} 

        \addplot [color=blue, line width=1pt] table[row sep=crcr]{%
        0.199976856129712   0.430634641311846\\
        0.0999950754752416  0.157987952744666\\
        0.0666664718175454  0.0573744549478546\\
        0.0499997004247293  0.028269233658988\\
        0.0399997888105867  0.0200298726219093\\
        0.0333331617388861  0.0129877884206915\\
        0.0285714246047077  0.0108491580664474\\
        0.0249999978650042  0.00797442253762903\\
        0.0222222025230041  0.00582892568075533\\
        0.0199999972740879  0.00450561630664437\\
        0.0181818117125527  0.00431942571172717\\
        0.0166666656465241  0.00340221337919711\\
        0.0153846118094882  0.00282670080850789\\
        0.0142857138581073  0.00242571947167369\\
        };
        \addlegendentry{$\varrho = 0.1847$, medium blocks} 

        \addplot [color=green, line width=1pt] table[row sep=crcr]{%
        0.199976856129712   0.391047157962815\\
        0.0999950754752416  0.155129979044834\\
        0.0666664718175454  0.0597029239458686\\
        0.0499997004247293  0.0285117887160364\\
        0.0399997888105867  0.020083827396461\\
        0.0333331617388861  0.0136875219995964\\
        0.0285714246047077  0.0113549777049582\\
        0.0249999978650042  0.00823516572794936\\
        0.0222222025230041  0.00628978259040291\\
        0.0199999972740879  0.00482580721428016\\
        0.0181818117125527  0.00424913521354786\\
        0.0166666656465241  0.00339260281624787\\
        0.0153846118094882  0.00300220055578415\\
        0.0142857138581073  0.00264440438117197\\
        };
        \addlegendentry{$\varrho = 0.1847$, loose blocks} 

        \addplot [color=orange, line width=1pt] table[row sep=crcr]{%
        0.199976856129712   1.92979859706332\\
        0.0999950754752416  0.677240045985596\\
        0.0666664718175454  0.265750175536597\\
        0.0499997004247293  0.139441668193262\\
        0.0399997888105867  0.0709700006772289\\
        0.0333331617388861  0.0487693963712732\\
        0.0285714246047077  0.0397349298771958\\
        0.0249999978650042  0.029574157959114\\
        0.0222222025230041  0.018700016075184\\
        0.0199999972740879  0.0158872131406591\\
        0.0181818117125527  0.0135471023751004\\
        0.0166666656465241  0.01154186018662\\
        0.0153846118094882  0.00964573699179239\\
        0.0142857138581073  0.00773783475577794\\
        };
        \addlegendentry{$\varrho = 0.0250$, two parts} 

        \addplot [color=black, forget plot]
          table[row sep=crcr]{%
        0.3   7\\
        3e-05  7e-04\\
        };
        \node[right] at (6e-03, 1.35e-01) {\tiny 1st order};
        \addplot [color=black, forget plot]
          table[row sep=crcr]{%
        0.3   7\\
        3e-05  7e-08\\
        };
        \node[above right] at (6e-03, 2.6e-03) {\tiny 2nd order};
        \addplot [color=black, forget plot]
          table[row sep=crcr]{%
        0.3   7\\
        3e-05  7e-12\\
        };
        \node[above right] at (1.6e-02, 7e-04) {\tiny 3rd order};
        \end{axis}

        \end{tikzpicture}
        }
        \phantomsubcaption
    \end{subfigure}
    \caption{3d tests on the unit sphere domain with discontinuous coefficient matrices}
    \label{fig:3d_discontinuous_test_1}
\end{figure}

\begin{figure}[htp]
    \centering
    \begin{subfigure}[b]{0.490\textwidth}
        \centering
        \resizebox{\textwidth}{!}{
        \begin{tikzpicture}
        \begin{axis}[%
        width=2.55in,
        height=1.5in,
        scale only axis,
        xmode=log,
        xmin=6e-03,
        xmax=0.3,
        xminorticks=true,
        xlabel style={font=\color{white!15!black}},
        xlabel={\large fill distance},
        ymode=log,
        ymin=2e-06,
        ymax=0.02,
        yminorticks=true,
        ylabel style={font=\color{white!15!black}},
        ylabel={\large max norm error},
        title style={align=center, font=\bfseries},
        title={\small 3d L-shaped Domain, $d = 3, p = 2, u_{1}^{(3d)}$},
        xmajorgrids,
        xminorgrids,
        ymajorgrids,
        yminorgrids,
        legend style={at={(0.03,0.97)}, nodes={scale=0.5, transform shape}, anchor=north west, legend cell align=left, align=left, draw=white!15!black}
        ]
        \addplot [color=red, line width=1pt] table[row sep=crcr]{%
        0.199976856129712   0.00713124088537853\\
        0.0999950754752418  0.00150429222800952\\
        0.0666664718175454  0.000554537216850726\\
        0.0499997004247293  0.000243947374046538\\
        0.0399997888105867  0.000162583903009939\\
        0.0333331617388861  0.000150281968947397\\
        0.0285714246047077  8.60134232967269e-05\\
        0.0249999978650042  6.74830280305017e-05\\
        0.0222222025230041  4.48648086432968e-05\\
        0.0199999972740879  3.63067744233447e-05\\
        0.0181818117125527  2.63470002668242e-05\\
        0.0166666656465241  2.37262446836084e-05\\
        0.0153846118094882  1.85597548938432e-05\\
        0.0142857138581073  1.62248068362736e-05\\
        };
        \addlegendentry{$\varrho = 0.1847$, dense blocks} 

        \addplot [color=blue, line width=1pt] table[row sep=crcr]{%
        0.199976856129712   0.00556369938950541\\
        0.0999950754752418  0.00121020561628349\\
        0.0666664718175454  0.000618425449898652\\
        0.0499997004247293  0.000228869472693738\\
        0.0399997888105867  0.000146830969838696\\
        0.0333331617388861  9.59884876388095e-05\\
        0.0285714246047077  8.35332695912072e-05\\
        0.0249999978650042  5.59916667883797e-05\\
        0.0222222025230041  4.64918371414491e-05\\
        0.0199999972740879  3.81644613989263e-05\\
        0.0181818117125527  2.94780793028693e-05\\
        0.0166666656465241  2.32376399784684e-05\\
        0.0153846118094882  2.16240653925226e-05\\
        0.0142857138581073  1.78206761951571e-05\\
        };
        \addlegendentry{$\varrho = 0.1847$, medium blocks} 

        \addplot [color=green, line width=1pt] table[row sep=crcr]{%
        0.199976856129712   0.00604135935821759\\
        0.0999950754752418  0.0015433558828688\\
        0.0666664718175454  0.00054106461843606\\
        0.0499997004247293  0.000224152651789655\\
        0.0399997888105867  0.00013940420691938\\
        0.0333331617388861  9.19699165859988e-05\\
        0.0285714246047077  6.66789714109939e-05\\
        0.0249999978650042  5.62096791849598e-05\\
        0.0222222025230041  4.47007033388402e-05\\
        0.0199999972740879  3.28618902076805e-05\\
        0.0181818117125527  2.17860127222913e-05\\
        0.0166666656465241  1.78671748329862e-05\\
        0.0153846118094882  1.78922504723289e-05\\
        0.0142857138581073  1.67065371803332e-05\\
        };
        \addlegendentry{$\varrho = 0.1847$, loose blocks} 

        \addplot [color=orange, line width=1pt] table[row sep=crcr]{%
        0.199976856129712   0.0105714566324464\\
        0.0999950754752418  0.00284684642283795\\
        0.0666664718175454  0.00112235476515687\\
        0.0499997004247293  0.000423214156735741\\
        0.0399997888105867  0.00025026951823981\\
        0.0333331617388861  0.000190365737866127\\
        0.0285714246047077  0.000184708496879793\\
        0.0249999978650042  0.000122238328136959\\
        0.0222222025230041  7.46017577748503e-05\\
        0.0199999972740879  7.05182525075898e-05\\
        0.0181818117125527  6.77976138385716e-05\\
        0.0166666656465241  4.66807894010657e-05\\
        0.0153846118094882  4.16215972038891e-05\\
        0.0142857138581073  3.76288925254542e-05\\
        };
        \addlegendentry{$\varrho = 0.0250$, two parts} 

        \addplot [color=black, forget plot]
          table[row sep=crcr]{%
        0.3   0.02\\
        3e-05  2e-06\\
        };
        \node[right] at (6e-03, 3.85e-04) {\tiny 1st order};
        \addplot [color=black, forget plot]
          table[row sep=crcr]{%
        0.3   0.02\\
        3e-05  2e-10\\
        };
        \node[above right] at (6e-03, 7.4e-06) {\tiny 2nd order};
        \addplot [color=black, forget plot]
          table[row sep=crcr]{%
        0.3   0.02\\
        3e-05  2e-14\\
        };
        \node[above right] at (1.6e-02, 2e-06) {\tiny 3rd order};
        \end{axis}

        \end{tikzpicture}
        }
        \phantomsubcaption
    \end{subfigure}
    \hfill
    \begin{subfigure}[b]{0.490\textwidth}
        \centering
        \resizebox{\textwidth}{!}{
        \begin{tikzpicture}
        \begin{axis}[%
        width=2.55in,
        height=1.5in,
        scale only axis,
        xmode=log,
        xmin=6e-03,
        xmax=0.3,
        xminorticks=true,
        xlabel style={font=\color{white!15!black}},
        xlabel={\large fill distance},
        ymode=log,
        ymin=7e-04,
        ymax=7,
        yminorticks=true,
        ylabel style={font=\color{white!15!black}},
        ylabel={\large max norm error},
        title style={align=center, font=\bfseries},
        title={\small 3d L-shaped Domain, $d = 3, p = 2, u_{2}^{(3d)}$},
        xmajorgrids,
        xminorgrids,
        ymajorgrids,
        yminorgrids,
        legend style={at={(0.03,0.97)}, nodes={scale=0.5, transform shape}, anchor=north west, legend cell align=left, align=left, draw=white!15!black}
        ]
        \addplot [color=red, line width=1pt] table[row sep=crcr]{%
        0.199976856129712   2.18128131475812\\
        0.0999950754752418  0.788757221864909\\
        0.0666664718175454  0.248374342305905\\
        0.0499997004247293  0.167618998348903\\
        0.0399997888105867  0.15652741898159\\
        0.0333331617388861  0.0806867249409144\\
        0.0285714246047077  0.053272535885192\\
        0.0249999978650042  0.0509903164193979\\
        0.0222222025230041  0.0302995160968997\\
        0.0199999972740879  0.0241088493748123\\
        0.0181818117125527  0.0256409973785274\\
        0.0166666656465241  0.0220245273129542\\
        0.0153846118094882  0.0193312899696032\\
        0.0142857138581073  0.0154253297704301\\
        };
        \addlegendentry{$\varrho = 0.1847$, dense blocks} 

        \addplot [color=blue, line width=1pt] table[row sep=crcr]{%
        0.199976856129712   1.81495813325303\\
        0.0999950754752418  0.609890947020183\\
        0.0666664718175454  0.427604194407916\\
        0.0499997004247293  0.157759740549483\\
        0.0399997888105867  0.116040157220869\\
        0.0333331617388861  0.083025572702379\\
        0.0285714246047077  0.0581234050823305\\
        0.0249999978650042  0.0481026993034881\\
        0.0222222025230041  0.0322194689077868\\
        0.0199999972740879  0.0270813747280556\\
        0.0181818117125527  0.023275305889296\\
        0.0166666656465241  0.0193694172419008\\
        0.0153846118094882  0.0167543955698619\\
        0.0142857138581073  0.0143983493454627\\
        };
        \addlegendentry{$\varrho = 0.1847$, medium blocks} 

        \addplot [color=green, line width=1pt] table[row sep=crcr]{%
        0.199976856129712   2.27601643705973\\
        0.0999950754752418  0.792488711312945\\
        0.0666664718175454  0.43892021991444\\
        0.0499997004247293  0.170902536006942\\
        0.0399997888105867  0.122992577122698\\
        0.0333331617388861  0.0838308786272002\\
        0.0285714246047077  0.0655563442119593\\
        0.0249999978650042  0.0493428044403821\\
        0.0222222025230041  0.0394699400014673\\
        0.0199999972740879  0.0272554510953888\\
        0.0181818117125527  0.025858937135645\\
        0.0166666656465241  0.0205682769088931\\
        0.0153846118094882  0.0184250550877785\\
        0.0142857138581073  0.0156458618057442\\
        };
        \addlegendentry{$\varrho = 0.1847$, loose blocks} 

        \addplot [color=orange, line width=1pt] table[row sep=crcr]{%
        0.199976856129712   3.23379644078507\\
        0.0999950754752418  1.47286229269617\\
        0.0666664718175454  0.418627383781597\\
        0.0499997004247293  0.265124804928899\\
        0.0399997888105867  0.183472997720553\\
        0.0333331617388861  0.114443881678636\\
        0.0285714246047077  0.0990737680676901\\
        0.0249999978650042  0.0731505349727399\\
        0.0222222025230041  0.0502326176073602\\
        0.0199999972740879  0.0432521251103637\\
        0.0181818117125527  0.0379649065137606\\
        0.0166666656465241  0.0314035077271129\\
        0.0153846118094882  0.0254491331746785\\
        0.0142857138581073  0.0222981445755757\\
        };
        \addlegendentry{$\varrho = 0.0250$, two parts} 

        \addplot [color=black, forget plot]
          table[row sep=crcr]{%
        0.3   7\\
        3e-05  7e-04\\
        };
        \node[right] at (6e-03, 1.35e-01) {\tiny 1st order};
        \addplot [color=black, forget plot]
          table[row sep=crcr]{%
        0.3   7\\
        3e-05  7e-08\\
        };
        \node[above right] at (6e-03, 2.6e-03) {\tiny 2nd order};
        \addplot [color=black, forget plot]
          table[row sep=crcr]{%
        0.3   7\\
        3e-05  7e-12\\
        };
        \node[above right] at (1.6e-02, 7e-04) {\tiny 3rd order};
        \end{axis}

        \end{tikzpicture}
        }
        \phantomsubcaption
    \end{subfigure}
    \caption{3d tests on the 3d L-shaped domain with discontinuous coefficient matrices}
    \label{fig:3d_discontinuous_test_2}
\end{figure}


\section{Conclusion}
\label{sec:conclusion}
In this paper, we have presented a monotone meshfree finite difference method for linear elliptic equations in non-divergence form via integral relaxation. 
Minimal positive stencils are found through an $l_1$-type minimization problem within a local elliptical searching neighborhood of each point in a meshfree point cloud. 
For the treatment of Dirichlet boundary conditions, a mapping strategy near the boundary is incorporated into the numerical scheme. 
Convergence is guaranteed by the consistency and monotonicity of the scheme and efficient solvers can be designed by the sparsity of the resulting linear system.  
It is essential to characterize the shape and size of the elliptical searching neighborhood for the guarantee of positive stencils. 
Our theoretical result improves the previously known result for the stencil sizes when $\varrho$, the ratio between the smallest and the largest eigenvalues of the coefficient matrix, is a very small number.  
More precisely, our theory predicts that within an elliptical region with a semi-major axis proportional to $\varrho^{-1/2}h$, we are able to find a positive stencil. 
The searching region determines the size of the $l_1$-type minimization problem, and therefore the efficiency of our algorithm. 
Our theory predicts that the number of points within the searching neighborhood grows with $1/\varrho$ with a rate not worse than $\mathcal{O}(\varrho^{-1/2})$ in 2d and 
$\mathcal{O}(\varrho^{-1})$ in 3d.

We present algorithms for point cloud management and matrix assembly.  
We also give practical guidance for finding the elliptical searching neighborhood and present numerical experiments. 
Numerical tests are presented in both 2d and 3d for several different domains and coefficient matrices, including the near degenerate cases when $\varrho\ll 1$. 
While theoretical convergence in $h$ for the numerical method (when the polynomial order $p=2$) is only first order, we observe second-order convergence in all cases for manufactured smooth solutions. 
The super-convergence is likely due to the cancellation of odd order terms for the stencils obtained from the $l_1$-type  minimization problem.
A rigorous explanation for this phenomenon is still an open question.  

Our current study focuses on the case of $p = 2$ and $d \in \{ 2, 3 \}$ with Dirichlet boundary conditions.
Future work includes higher order methods, problems in higher dimensions, and Neumann boundary value problems.
Our convergence theorem assumes that the exact solution exists at least in $C^2$.
Further questions on the convergence of the method to viscosity solutions can be discussed, following the approaches discussed in \cite{barles1991convergence,feng2013recent,neilan2017numerical}. 
Extending the study to surface PDEs is also a natural direction for future research.
While we only test our algorithm for smooth manufactured solutions,  adaptive methods will be useful when solutions display singularity. 
For adaptive point cloud management, some data structures that support fast insertion and deletion may be needed, for instance, R-tree \cite{guttman1984r} and scapegoat k-d tree \cite{galperin1993scapegoat}. 
Furthermore, a possible future research direction is the study of monotone numerical schemes for elliptic equations with heterogeneous coefficients \cite{abdulle2003finite}.
The topic of monotone schemes for solving PDEs has a long history in numerical analysis. 
While our new ideas, inspired by the recent development of nonlocal modeling and meshfree methods, 
are presented for the linear elliptic equations, extending them to other types of PDEs is also possible for future research.


\section*{Acknowledgement}
This research is supported in part by NSF DMS-2111608 and NSF DMS-2240180.
UC San Diego Research IT Services and Academic Technology Services are acknowledged for providing the Research Cluster computing resource for the numerical simulations in this work.
The authors would like to thank Qiang Du, Xiaobing Feng, Nathaniel Trask and Zhen Zhang for their helpful discussions on the subject.
The authors would also like to thank the anonymous reviewers for their valuable comments and suggestions, which help to strengthen the manuscript.

\section*{Data availability \& Conflict of interest}
The datasets generated during and/or analysed during the current study are available from the corresponding author on reasonable request.
The authors declare that they have no conflict of interest.

\appendix
\renewcommand \theequation {\Alph{section}.\arabic{equation}}

\section{Proof of \texorpdfstring{\Cref{thm:nbh_criterion}}{Theorem 3.9}}
\label{sec:appendixA}

We show the proof of \Cref{thm:nbh_criterion} in \Cref{sec:appendixA}. We begin with some useful lemmas before proving the theorem.

\begin{lem}
\label{lem:appendix_1}
Let $r(\vb,\xb_i)$ denote the radius of the inscribed ball in $T_i (\cC_\del^{\vb}(\xb_i))$ and $h$ be the fill distance asscociated with $X = \{ \xb_i\}_{i=1}^M\subset \Om_\del$. 
If
\[
h < \min_{\vb\in \R^d, |\vb|=1} r(\vb,\xb_i), 
\]
then $S_{\del, h, 2}(\xb_i)$ and $\overline{S}_{\del, h, 2}(\xb_i)$ are not empty. 
\end{lem}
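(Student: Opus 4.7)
The plan is to reduce the statement to \Cref{coro:cone_condition} and then use the fill-distance hypothesis in a direct geometric way. By \Cref{coro:cone_condition}, it suffices to show that under the hypothesis $h<\min_{\vb}r(\vb,\xb_i)$, for every unit vector $\vb\in\R^d$ the set $T_i(\cC_\del^{\vb}(\xb_i))\cap X\setminus\{\xb_i\}$ is non-empty. Fix such a $\vb$. By definition of $r(\vb,\xb_i)$, there is a closed Euclidean ball $\overline{B_{r(\vb,\xb_i)}(\yb_\vb)}$ of radius $r(\vb,\xb_i)$ that is inscribed in $T_i(\cC_\del^{\vb}(\xb_i))$, centered at some point $\yb_\vb$.

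The first thing I would verify is that $\yb_\vb$ lies in $\Om_\del$, so that the fill-distance characterization \eqref{eq:filldistance} can be applied at $\yb_\vb$. Since $T_i(\cC_\del^{\vb}(\xb_i))\subset\cE_\del^{\xb_i}(\xb_i)$ and all semi-axes of $\cE_\del^{\xb_i}(\xb_i)$ are bounded above by $\del$ (because the eigenvalues of $A(\xb_i)$ lie in $(0,1]$), every point of $T_i(\cC_\del^{\vb}(\xb_i))$ is within Euclidean distance $\del$ of $\xb_i\in\Om$, and therefore belongs to $\Om\cup\Om_{\cI_\del}=\Om_\del$. In particular $\yb_\vb\in\Om_\del$.

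With this in hand, by definition of the fill distance there exists $\xb_j\in X$ with $|\xb_j-\yb_\vb|\leq h<r(\vb,\xb_i)$, so $\xb_j$ lies strictly inside the inscribed ball and hence inside $T_i(\cC_\del^{\vb}(\xb_i))$. What remains is to rule out $\xb_j=\xb_i$. This is the one place where a little care is needed, and is the only mildly non-automatic step: the apex $\xb_i$ of the cone $\cC_\del^{\vb}(\xb_i)$ sits on the boundary of $T_i(\cC_\del^{\vb}(\xb_i))$, and any Euclidean ball inscribed in a proper cone must have its center at a positive distance from the apex (at least $r(\vb,\xb_i)/\sin\alpha$, where $\alpha$ is the half-opening angle). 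Consequently $\xb_i\notin B_{r(\vb,\xb_i)}(\yb_\vb)$, so $\xb_j\neq\xb_i$.

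Since $\vb$ was arbitrary, the hypothesis of \Cref{coro:cone_condition} is satisfied at $\xb_i$, which gives non-emptiness of both $S_{\del,h,2}(\xb_i)$ and $\overline{S}_{\del,h,2}(\xb_i)$ and completes the proof. The main (and indeed only substantive) obstacle in this argument is the apex-separation observation in the previous paragraph; the rest is bookkeeping tying \Cref{coro:cone_condition} to the fill-distance definition.
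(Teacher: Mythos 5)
Your proof is correct and follows essentially the same route as the paper's: reduce to \Cref{coro:cone_condition} and use the fill-distance definition to place a point of $X$ inside the inscribed ball of $T_i(\cC_\del^{\vb}(\xb_i))$ for every unit $\vb$ (the paper phrases this as a contradiction, calling such a ball a ``hole'' of radius larger than $h$). Your write-up additionally makes explicit two details the paper leaves implicit---that the ball's center lies in $\Om_\del$ so the fill-distance bound applies, and that the apex $\xb_i$ cannot lie in the open inscribed ball---which are correct (the quantitative $r/\sin\alpha$ remark is only exact for a Euclidean cone, but the qualitative apex-exclusion you need holds for the affine image as well).
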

\begin{proof}
Notice that by the definition of the fill distance in \cref{eq:filldistance},  there are no holes with a radius larger than $h$. 
Suppose $S_{\del, h, 2}(\xb_i)$ or $\overline{S}_{\del, h, 2}(\xb_i)$ is empty, then by \Cref{coro:cone_condition}, there exists $\vb$ such that 
$T_i(\cC^{\vb}_\del(\xb_i))$ contains no point in $X\backslash \{ \xb_i\}$. Therefore the inscribed ball in $T_i(\cC^{\vb}_\del(\xb_i))$ is a hole 
with radius larger than $h$ by the assumption, which gives a contradiction. 
\end{proof}

From the lemma above, our goal is then to get a lower bound for $ \min_{\vb\in \R^d, |\vb|=1} r(\vb,\xb_i)$ for each $\xb_i\in\Om$.
We first present a result in 2d which will also be useful for the 3d estimates. 
In 2d, we assume that $\cC^{\vb}_\del(\xb_i)$ is a cone with total opening angle $2\phi$. In addition, without loss of generality, we fix $\xb_i\in\Om$ and assume that 
\beq
\label{eq:2d_matrix_standardform}
A(\xb_i) = 
\begin{pmatrix}
\varrho & 0 \\
0 & 1 \\
\end{pmatrix}. 
\eeq
From symmetry, we only need to consider $\vb(\theta) = (\cos(\theta), \sin(\theta))$ for $\theta\in [0,\frac{\pi}{2}]$. 

\begin{lem}
\label{lem:min_radius}
Consider $d=2$ and $A(\xb_i)$ given by \cref{eq:2d_matrix_standardform}. Assume that $\cC^{\vb}_\del(\xb_i)$ is a cone with total opening angle $2\phi$  for $\phi \in (0,\frac{\pi}{8}]$, and $r(\vb,\xb_i)$ denote the radius of the inscribed circle in $T_i (\cC_\del^{\vb}(\xb_i))$.
In addition, let $\vb(\theta) = (\cos(\theta), \sin(\theta))$ for $\theta\in [0,\frac{\pi}{2}]$. 
Then, there exists a constant $K =K (\phi)>0$ such that
\[
\min_{\theta \in [0, \frac{\pi}{2}]} r(\vb(\theta),\xb_i) \geq K \del \sqrt{\varrho}.
\]
\end{lem}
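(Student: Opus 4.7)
The plan is to sidestep a direct computation of the inscribed ball of the image region $T_i(\mathcal{C}_\delta^{\vb(\theta)}(\xb_i))$, whose boundary would involve an elliptical arc and two $\theta$-dependent rotated edges, by working on the preimage side and exploiting that $M(\xb_i)$ has smallest singular value $\sqrt{\varrho}$. The whole argument then collapses into two elementary steps and automatically gives a bound uniform in $\theta$.

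First, I would compute the inscribed ball of the preimage cone $\mathcal{C}_\delta^{\vb(\theta)}(\xb_i)$. Since this is a planar cone with apex at $\xb_i$, axis along $\vb(\theta)$, half-angle $\phi$, and outer boundary on the arc $\partial B_\delta(\xb_i)$, its inscribed ball is, by symmetry, centered on the axis at $\xb_i + d\,\vb(\theta)$ for some $d > 0$. The center is equidistant from both straight sides and from the arc precisely when $d\sin\phi = \delta - d$, which yields $d = \delta/(1+\sin\phi)$ and inscribed radius $r_0 = \delta\sin\phi/(1+\sin\phi)$. No geometry beyond this is needed for $\phi \in (0,\pi/8]$; only $\sin\phi > 0$ is used.

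Second, since $A(\xb_i) = \operatorname{diag}(\varrho,1)$, we have $M(\xb_i) = \operatorname{diag}(\sqrt{\varrho},1)$, whose smallest singular value is $\sqrt{\varrho}$, so $|M(\xb_i)^{-1}\yb| \leq |\yb|/\sqrt{\varrho}$ for every $\yb \in \R^2$. I would use this to show that whenever $B_{r_0}(\xb^*) \subset \mathcal{C}_\delta^{\vb(\theta)}(\xb_i)$, the image $T_i(B_{r_0}(\xb^*))$ contains the ball $B_{r_0 \sqrt{\varrho}}(T_i\xb^*)$: any $\yb$ with $|\yb - T_i\xb^*| \leq r_0\sqrt{\varrho}$ is the image of $\xb := T_i^{-1}\yb$, and
\[
|\xb - \xb^*| = |M(\xb_i)^{-1}(\yb - T_i\xb^*)| \leq \tfrac{1}{\sqrt{\varrho}}\,r_0\sqrt{\varrho} = r_0,
\]
so $\xb \in B_{r_0}(\xb^*) \subset \mathcal{C}_\delta^{\vb(\theta)}(\xb_i)$, and hence $\yb \in T_i(\mathcal{C}_\delta^{\vb(\theta)}(\xb_i))$.

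Combining the two steps gives $r(\vb(\theta),\xb_i) \geq r_0 \sqrt{\varrho}$ with no $\theta$ on the right-hand side, so taking the minimum over $\theta\in[0,\pi/2]$ preserves the bound, and setting $K(\phi) := \sin\phi/(1+\sin\phi)$ yields the claim. There is no real obstacle once the singular-value reduction is in hand; the whole point of this approach is to avoid computing the inscribed disk of a distorted elliptical sector from scratch, a calculation that would otherwise be both $\theta$-dependent and geometrically awkward.
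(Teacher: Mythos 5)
Your proof is correct, and it takes a genuinely different and substantially cleaner route than the paper's. The paper's proof parametrizes the image region directly: it splits $\theta\in[0,\pi/2]$ into three subintervals, derives explicit trigonometric formulas for the opening angle $\Gamma(\theta)$ and the minimal radius $R(\theta)$ of a circular sector that fits inside $T_i(\cC_\del^{\vb(\theta)}(\xb_i))$, bounds the inscribed radius of that sector from below by $\tfrac12\sin(\Gamma(\theta)/2)R(\theta)$, and then, on the troublesome middle interval, defines a function $G(\varrho,\theta)$ and argues by continuity on a compact set that its minimum is strictly positive --- yielding a constant $K(\phi)$ that is not given in closed form. Your argument sidesteps all of that by working on the preimage side. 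You inscribe a ball of radius $r_0=\del\sin\phi/(1+\sin\phi)$ in the Euclidean sector $\cC_\del^{\vb(\theta)}(\xb_i)$, which is a one-line classical computation and is clearly $\theta$-independent; then you use only that $T_i-\xb_i$ is the linear map $M(\xb_i)=\operatorname{diag}(\sqrt\varrho,1)$, so the image of that ball is an ellipse whose shortest semi-axis is $r_0\sqrt\varrho$, hence contains a ball of that radius. This gives the lower bound $r(\vb(\theta),\xb_i)\ge r_0\sqrt\varrho$ for every $\theta$ in one stroke, with the explicit constant $K(\phi)=\sin\phi/(1+\sin\phi)$ --- which in fact exceeds the (non-explicit) constant one would extract from the paper's estimate, since the paper discards factors along the way (e.g., replacing $\sin(\alpha/2)/(1+\sin(\alpha/2))$ by $\tfrac12\sin(\alpha/2)$ and taking a conservative minimum over three cases). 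What your approach buys is precisely the avoidance of the $\theta$-dependent geometry of the distorted sector: you never compute $\Gamma(\theta)$ or $R(\theta)$, and there is no compactness argument, so the proof is shorter, the constant is explicit, and the same argument transfers verbatim to the $3$d setting of \Cref{thm:nbh_criterion} (again giving a cleaner bound there). The only thing your proof does not provide is the per-$\theta$ detail that the paper's computation reveals, but none of that is needed for the stated conclusion.
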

\begin{proof}
We try to fit a cone in $T_i(\cC_\del^{\vb(\theta)}(\xb_i))$ and then find the inscribed circle in the cone. 
First, notice that for a cone with a radius $R$ and total opening angle $\alpha\in (0,\pi)$, the radius of the inscribed circle is given by the formula
\[
\frac{\sin(\alpha/2)}{1+\sin(\alpha/2)} R \geq \frac{1}{2} \sin(\alpha/2) R. 
\]
Notice that $\frac{1}{2} \sin(\alpha/2) R$  increases with $\alpha \in (0,\pi)$ and $R$. 
Now for $\theta\in [0, \frac{\pi}{2}]$, we let $\Gamma(\theta)$ denote the opening angle of $T_i(\cC_\del^{\vb(\theta)}(\xb_i))$ and define
\[
R(\theta) := \min_{\varphi\in [\theta -\phi, \theta +\phi]}\del \sqrt{\varrho \cos^2(\varphi) + \sin^2(\varphi)}   = \min_{\varphi\in [\theta -\phi, \theta +\phi]}\del \sqrt{\varrho  + (1-\varrho) \sin^2(\varphi)},
\]
then it is easy to see that a cone with radius $R(\theta)$ and total opening angle $\Gamma(\theta)$ is contained in $T_i(\cC_\del^{\vb(\theta)}(\xb_i))$.  
Therefore we have
\[
\min_{\theta \in [0, \frac{\pi}{2}]} r(\vb(\theta),\xb_i) \geq  \min_{\theta \in [0, \frac{\pi}{2}]} \frac{1}{2} \sin(\Gamma(\theta)/2) R(\theta).  
\]
By calculation we have
\[
\Gamma(\theta) = 
\left\{ 
\begin{aligned}
\arctan(\sqrt{\varrho^{-1}} \tan(\theta + \phi)) -  \arctan(\sqrt{\varrho^{-1}} \tan(\theta - \phi)), \quad  &\theta \in [0, \pi/2-\phi), \\
\pi +   \arctan(\sqrt{\varrho^{-1}} \tan(\theta + \phi)) -  \arctan(\sqrt{\varrho^{-1}} \tan(\theta - \phi)), \quad  &\theta \in ( \pi/2-\phi, \pi/2], 
\end{aligned}
\right. 
\]
and
\[
R(\theta) =
\left\{ 
\begin{aligned}
&\del \sqrt{\varrho},   \quad &\theta \in [0, \phi], \\
&\del \sqrt{\varrho  + (1-\varrho) \sin^2(\theta - \phi)},   \quad &\theta \in [\phi, \pi/2]. 
\end{aligned}
\right. 
\]

For $\theta \in [0, \phi]  $, $\Gamma(\theta)$ decreases and $R(\theta) = \del \sqrt{\varrho}$, so 
\[
\min_{\theta \in [0,\phi]} \frac{1}{2} \sin(\Gamma(\theta)/2) R(\theta) = \frac{1}{2} \sin(\Gamma(\phi)/2) \del \sqrt{\varrho} \geq \frac{\sin(\phi)}{2}\del \sqrt{\varrho}
\]
where we have used  $2\phi\leq \Gamma(\phi)\leq \pi/2$. 

For $\theta \in [\pi/2 -2\phi, \pi/2] $,  $\Gamma(\theta)$ decreases and $R(\theta)$ increases, so 
\[
\begin{split}
&\quad \ \min_{\theta \in [\pi/2 -2\phi, \pi/2] } \frac{1}{2} \sin(\Gamma(\theta)/2) R(\theta)\\
&\geq  \frac{1}{2} \sin(\Gamma(\pi/2)/2) R(\pi/2 -2\phi)  
\geq \frac{\del}{2}\sin(\Gamma(\pi/2)/2) \sin(\pi/8)  \\
& =   \frac{\sin(\pi/8) \del }{2} \frac{\sqrt{\varrho}}{\sqrt{\varrho + \tan^2(\pi/2-\phi)}}  \geq \frac{\sin(\pi/8)}{2\sqrt{1 + \tan^2(3\pi/8)}}  \del\sqrt{\rho}. 
\end{split}
\]
where we have used $\phi\leq \pi/8$  and $\Gamma(\pi/2) = 2 \arctan(\sqrt{\varrho} \cot(\pi/2-\phi))$.   

Now for $\theta \in [\phi , \pi/2-2\phi]$, we use the formulas for $\Gamma(\theta)$ and $R(\theta)$ to compute $\frac{1}{2} \sin(\Gamma(\theta)/2) R(\theta) $. 
Denote $\alpha= \arctan(\sqrt{\varrho^{-1}} \tan(\theta + \phi))$ and $\beta = \arctan(\sqrt{\varrho^{-1}} \tan(\theta - \phi))$. Use the formula
\[
\sin\left(\frac{\alpha-\beta}{2}\right) =  \sqrt{\frac{1-\cos(\alpha-\beta)}{2}} = \sqrt{\frac{1-\cos(\alpha) \cos(\beta) - \sin(\alpha)\sin(\beta)}{2}} ,
\]
and the fact that 
\[
\begin{split}
&\sin(\alpha) = \frac{\sqrt{\varrho^{-1}} \tan(\theta + \phi)}{ \sqrt{1+\varrho^{-1}\tan^2(\theta+\phi)}},\; \cos(\alpha) = \frac{1}{ \sqrt{1+\varrho^{-1}\tan^2(\theta+\phi)}}, \\
& \sin(\beta) = \frac{\sqrt{\varrho^{-1}} \tan(\theta - \phi)}{ \sqrt{1+\varrho^{-1}\tan^2(\theta-\phi)}},\; \cos(\beta) = \frac{1}{ \sqrt{1+\varrho^{-1}\tan^2(\theta-\phi)}},
\end{split}
\]
we can obtain the formula for $\frac{1}{2} \sin(\Gamma(\theta)/2) R(\theta)$ where $\theta \in [\phi , \pi/2-2\phi]$. 
In particular, denote $g_1(\theta) = \tan^2(\theta - \phi)$,  $g_2(\theta) = \tan^2(\theta + \phi)$, and $g_3(\theta)= \sin^2(\theta-\phi)$, we fine
\[
\begin{split}
&\frac{1}{2} \sin(\Gamma(\theta)/2) R(\theta) \\
=&\frac{\del}{2\sqrt{2}}  \sqrt{\frac{\sqrt{(\varrho+g_1(\theta)) (\varrho+g_2(\theta)) } -\varrho- \sqrt{g_1(\theta)g_2(\theta)}  }{\sqrt{(\varrho+g_1(\theta)) (\varrho+g_2(\theta)) }} \left(\varrho  + (1-\varrho) g_3(\theta))\right)} \\
=& \frac{\del \sqrt{\rho}}{2}  \sqrt{\frac{ \left((g_1(\theta))^{1/2}-(g_2(\theta))^{1/2}\right)^2  \left(\varrho  + (1-\varrho) g_3(\theta))\right) }{\sqrt{(\varrho+g_1(\theta)) (\varrho+g_2(\theta)) } \left(\sqrt{(\varrho+g_1(\theta)) (\varrho+g_2(\theta)) } +\varrho + \sqrt{g_1(\theta)g_2(\theta)}\right) }} \\
=: & \frac{\del \sqrt{\rho}}{2} G(\varrho, \theta). 
\end{split}
\]
Notice that $ G(\varrho, \theta)$ defined in the above is a continuous function on $(\varrho, \theta) \in [0,1]\times [\phi, \pi/2-2\phi]$. 
Therefore it attains a minimum value at some $(\varrho^\ast, \theta^\ast) \in  [0,1]\times [\phi, \pi/2-2\phi]$. Next we show that we must have $ G(\varrho^\ast, \theta^\ast)>0$. 
Indeed, if $\varrho^\ast >0$, then it is easy to see that  $ G(\varrho^\ast, \theta^\ast)>0$. Now if $\varrho^\ast =0$, then
\[
\begin{split}
G(0, \theta) &= \sqrt{\frac{ \left((g_1(\theta))^{1/2}-(g_2(\theta))^{1/2}\right)^2  g_3(\theta) }{2g_1(\theta) g_2(\theta)}}  \\
&=  \left(\tan(\theta + \phi) -  \tan(\theta - \phi)\right)  \frac{\sin(\theta-\phi)}{2 \tan(\theta - \phi)  \tan(\theta + \phi)  } \\
& = \left(\tan(\theta + \phi) -  \tan(\theta - \phi)\right)  \frac{\cos(\theta-\phi)}{2  \tan(\theta + \phi)  }>0
\end{split}
\] 
for any $\theta\in[\phi, \pi/2-2\phi]$.  Therefore, we can take 
\[
K(\phi) = \min\left\{ \frac{\sin(\phi)}{2} ,\frac{\sin(\pi/8)}{2\sqrt{1 + \tan^2(3\pi/8)}}, G(\varrho^\ast, \theta^\ast)/2 \right\} >0 
\]
for the claim to be true.
\end{proof}

\begin{proof}[Proof of {\Cref{thm:nbh_criterion}}.]
Let $\lambda_j=\lambda_j(\xb_i)$ denotes the $j$-th smallest eigenvalue of $A(\xb_i)$. 
For $d=2$, we apply \Cref{lem:min_radius} with $\phi = \pi/8$ on a rescaled ellipse of $T_i(B_\del(\xb_i))$, we get 
\[
\min_{\theta \in [0, \frac{\pi}{2}]} r(\vb(\theta),\xb_i)  \geq C_1 \del \sqrt{\lambda_2}\sqrt{\frac{\lambda_1}{\lambda_2}} =C_1 \del \sqrt{\lambda_1},
\]
where $C_1 = K(\pi/8)$. 

Now consider $d=3$. 
First of all, we can assume without loss of generality that $\lambda_3=1$ by the method of rescaling. 
By the discussions at the beginning of \Cref{subsec:neighborhood_criteria},  $\cC^{\vb}_\del(\xb_i)$ is a 3d cone with total opening angle $33.7^\circ$ for a given unit vector $\vb\in \R^3$.
Let $P_{\vb} \subset \R^3$ be a 2d plane that contains the vector $\vb$, then we see that $P_{\vb} \cap B_\del(\xb_i)$ is a circular domain and $P_{\vb} \cap \cC^{\vb}_\del(\xb_i)$ is a 2d cone with total opening angle $33.7^\circ$. 
With the transform $T_i$, we see that $T_i (P_{\vb} \cap B_\del(\xb_i))$ is a 2d ellipse and $T_i( P_{\vb} \cap \cC^{\vb}_\del(\xb_i))$ is a section of the ellipse.
Therefore, the 2d calculations in \Cref{lem:min_radius} can be applied. 
Notice that for each $P_{\vb}$, there exists $\rho_1$ and $\rho_2$ with $\lambda_1 \leq \rho_1 \leq \rho_2\leq 1$ such that the lengths of the semi-axes of the ellipse $T_i (P_{\vb} \cap B_\del(\xb_i))$ are given by $\sqrt{\rho_1}$ and $\sqrt{\rho_2}$.
We can then rescale the the ellipse $T_i (P_{\vb} \cap B_\del(\xb_i))$ and use \Cref{lem:min_radius} with $\phi=33.7^\circ/2$ we see that the radius of the inscribed circle in $T_i( P_{\vb} \cap \cC^{\vb}_\del(\xb_i))$  has a lower bound
\[
\tilde{C} (\del \sqrt{\rho_2}) \sqrt{\frac{\rho_1}{\rho_2}} \geq \tilde{C} \del\sqrt{\lambda_1},
\]
where $\tilde{C}  =K(33.7^\circ/2)>0$. 
Notice that $P_{\vb}$ is an arbitrary plane that contains $\vb$, and in addition, the average length of the line segments that connect $T_i(\xb_i)$ and the edge of $T_i( P_{\vb} \cap \cC^{\vb}_\del(\xb_i))$ is at the same scale for different plane $P_{\vb}$. Therefore, it can be shown that there exists $C_2>0$ such that 
\[
\min_{\theta \in [0, \frac{\pi}{2}]} r(\vb(\theta),\xb_i) \geq C_2 \del\sqrt{\lambda_1}. 
\]

At last, by \Cref{lem:appendix_1} and the above discussions, there exists $C=C(d)>0$ such that whenever $h\leq C \del\sqrt{\lambda_1(\xb_i)} $, $S_{\del, h, 2}(\xb_i)$ and $\overline{S}_{\del, h, 2}(\xb_i)$ are not empty. Therefore taking $c(d)=1/C(d)$ the conclusion is true.  
By our assumption, $\varrho\leq \lambda_1(\xb_i)$ for all $\xb_i$, and therefore $\del\geq c h (\varrho)^{-1/2}$ implies the existence of positive stencils. 
\end{proof}

\section{Point cloud generation and adjustment}
\label{sec:appendixB}

In this appendix, we discuss the generation of proper point clouds that satisfy \Cref{def:pointcloud}.
We first initialize a random point cloud using the Quasi-Monte Carlo method \cite{niederreiter1978quasi} (see \cref{fig:generation_initialize}), then adjust this point cloud to make it proper. Adjustment contains three steps in each loop:\\
\indent {\it Step 1.} add points until $h$ satisfies condition $(i)$ (see \cref{fig:generation_add});  \\
\indent {\it Step 2.} map points until $\kappa$ satisfies condition $(iii)$ (see \cref{fig:generation_map}); \\
\indent {\it Step 3.} merge points until $\zeta$ satisfies condition $(ii)$ (see \cref{fig:generation_merge}). \\
Adjustment stops when the conditions in \Cref{def:pointcloud} are satisfied. In practice, it usually takes a few loops to make the point cloud proper.

\begin{figure}[htp]
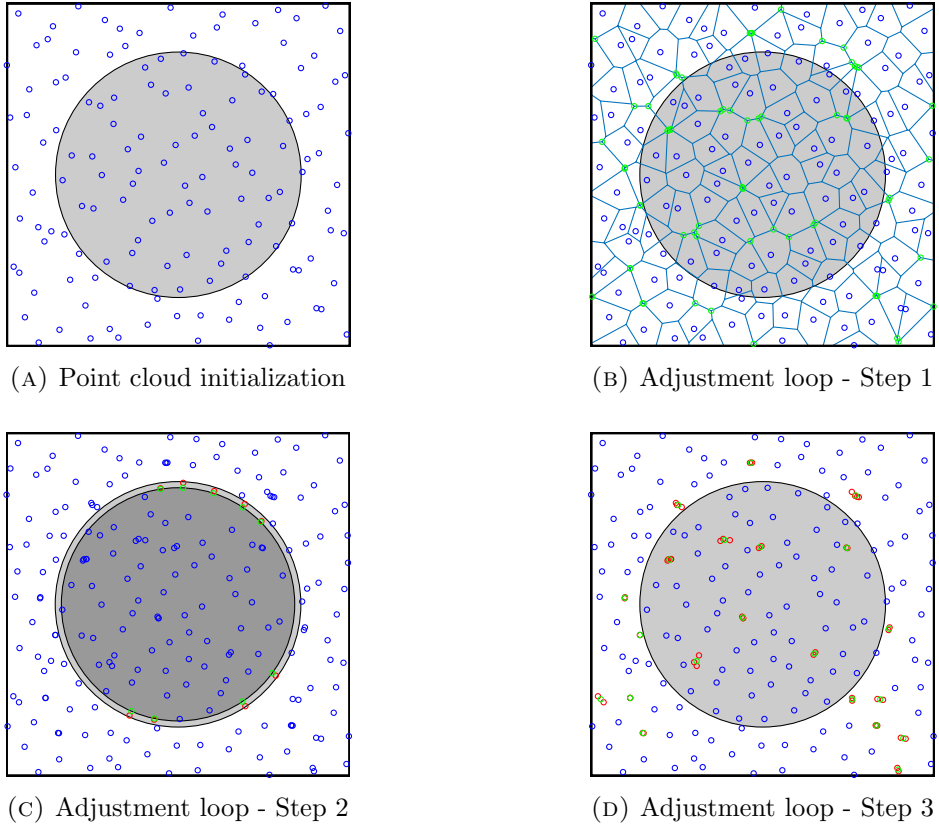

    \centering
    \begin{subfigure}[b]{0.490\textwidth}
        \centering

        \caption{Adjustment loop - Step 3}
        \label{fig:generation_merge}
    \end{subfigure}
    \caption{The process of proper point cloud generation. The grey circular domain is $\Om$. The square domain is a bounding box of $\Om_{\del_0}$ for some $\del_0>0$. 
    (A): initialize a point cloud by the Quasi-Monte Carlo method. (B):  use the Voronoi diagram \cite{voronoi1908nouvelles_b,voronoi1908nouvelles_a}
    for the calculation of the fill distance, and then add the green points to the point cloud so that $h$ satisfies condition $(i)$. (C): map points near the boundary of $\Om$ to the interior so that $\kappa$ satisfies condition $(iii)$. (D): merge points whose distances are less than $2 c_\zeta h$ so that $\zeta$ satisfies condition $(ii)$. Notice that after merging of points, the fill distance may increase, as a result, the adjustment loop may be needed again.}
    \label{fig:generation}
\end{figure}

\begin{remark}
Since the domain $\Om_\del$ may be irregular, in practice, we always generate point clouds on a larger bounding box of $\Om_{\del_0}$ for $\del\in (0,\del_0]$, as indicated by 
\cref{fig:generation_initialize,fig:generation_add,fig:generation_map,fig:generation_merge}. 
The formulas for the fill distance in \cref{eq:filldistance} and condition (i) \Cref{def:pointcloud} are then modified accordingly.
In practice, $\del_0$ is determined by the largest discretization parameter as well as the ratio $\varrho$ as indicated by neighborhood estimate in \Cref{subsec:neighborhood_criteria} and \Cref{subsec:searching}. 
\end{remark}

\begin{remark}
One can also use the Quasi-Monte Carlo method for generating the initial point cloud and perform only step 2 without the adding and merging steps.
Our adjustment algorithm provides explicit control over the fill distance and the separation distance and it leads to smaller fill distances for the same number of interior points compared with point clouds without adjustment. This is a trade-off situation, namely, one can save memory by using extra time adjusting the initialized point cloud, or vice versa.
\end{remark}


\section{Inscribed circle search algorithm}
\label{sec:appendixC}
We show the details of finding the radius of the inscribed circle in $T_{i}(\mathcal{C}_{1}^{\boldsymbol{v}}(\boldsymbol{x}_{i}))$ that contained in an ellipse given by $x^{2} / \varrho + y^{2} = 1$.
Let $(x_{0}, y_{0})$ be the center of the inscribed circle with radius $r$, then it can only sit on the angle bisector of $T_{i}(\mathcal{C}_{1}^{\boldsymbol{v}}(\boldsymbol{x}_{i}))$.
Let $(x_{1}, y_{1})$ and $(x_{2}, y_{2})$ represent the two corner points of $T_{i}(\mathcal{C}_{1}^{\boldsymbol{v}}(\boldsymbol{x}_{i}))$. 
Then by some elementary calculations, there exists some $t>0$ such that
\begin{align*}
(x_{0}, y_{0}) = t \left ( \sqrt{x_{2}^{2} + y_{2}^{2}} x_{1} + \sqrt{x_{1}^{2} + y_{1}^{2}} x_{2}, \sqrt{x_{2}^{2} + y_{2}^{2}} y_{1} + \sqrt{x_{1}^{2} + y_{1}^{2}} y_{2} \right )
\end{align*}
and
\begin{align*}
r = t |x_{1} y_{2} - y_{1} x_{2}|.
\end{align*}
To determine $t>0$, we find the closest point to the circle center on the ellipse and choose $t>0$ such that the point is also on the circle.  
The closest point to the circle center on the ellipse can be found by the minimization problem
\begin{align*}
\min_{\theta} \| (\sqrt{\varrho} \cos(\theta), \sin(\theta)) - (x_{0}, y_{0}) \|_{2}^{2},
\end{align*}
which can be solved by, e.g., Newton's method. 
At last, one may use a numerical method, e.g., the bisection method, to determine $t>0$ such that the point is also on the circle.


\bibliographystyle{abbrv} 

\bibliography{ref}

\end{document}